\def\ps@pprintTitle{%
   \let\@oddhead\@empty
   \let\@evenhead\@empty
   \def\@oddfoot{\reset@font\hfil\thepage\hfil}
   \let\@evenfoot\@oddfoot
}
\newcommand\rurl[1]{%
  \href{http://#1}{\nolinkurl{#1}}%
}
\theoremstyle{plain}
\newtheorem{thm}{Theorem}[section]
\newtheorem{conj}[thm]{Conjecture}
\newtheorem{corl}[thm]{Corollary}
\theoremstyle{definition}
\theoremstyle{plain}
\newtheorem{therm}{Theorem}[subsection]
\newtheorem{propo}[therm]{Proposition}
\newtheorem{lema}[therm]{Lemma}
\theoremstyle{definition}
\theoremstyle{definition}
\newtheorem{dn}{Definition}[subsection]
\theoremstyle{plain}
\newtheorem{bd}[dn]{Lemma}
\newtheorem{theo}[dn]{Theorem}
\def\leq{\leqslant}
\def\geq{\geqslant}
\def\DD{D\kern-.7em\raise0.4ex\hbox{\char '55}\kern.33em}
\def\blfootnote{\xdef\@thefnmark{}\@footnotetext}
\begin{document}
\fontsize{11.5pt}{11.5}\selectfont

\begin{frontmatter}

\title{The "hit" problem of five variables in the generic degree \\ and its application}

\author{\DD\d{\u a}ng V\~o Ph\'uc}
\address{{\fontsize{10pt}{10}\selectfont Faculty of Education Studies, University of Khanh Hoa,\\ 01 Nguyen Chanh, Nha Trang, Khanh Hoa, Viet Nam\\[1mm]  \textit{(Dedicated to Professor James F. Peters)}}}
\ead{dangphuc150488@gmail.com, dangvophuc@ukh.edu.vn}

\begin{abstract}
Let $P_s:= \mathbb F_2[x_1,x_2,\ldots ,x_s]$ be the graded polynomial algebra over the prime field of two elements, $\mathbb F_2$, in $s$ variables $x_1, x_2, \ldots , x_s$, each of degree one. This algebra is considered as a graded module over the  mod-2 Steenrod algebra, $\mathscr {A}$. We are interested in the {\it "hit" problem} of finding a minimal set of generators for $\mathscr A$-module $P_s.$ This problem is unresolved for every $s\geqslant 5.$ In this paper, we study the hit problem of five variables in a generic degree, from which we investigate Singer's conjecture [Math. Z. 202 (1989), 493-523] for the transfer homomorphism of rank $5$ in degrees given. This gives an efficient method to study the algebraic transfer and it is different from the ones of Singer.

\end{abstract}

\begin{keyword}

Steenrod algebra; Peterson hit problem; Algebraic transfer; Steenrod squares; Invariant theory



\MSC[2010] Primary 55S10; Secondary 55S05, 55T15.
\end{keyword}


\end{frontmatter}

\tableofcontents

\section{Introduction and main results}\label{s1}
\setcounter{equation}{0}

Let $\mathscr A$ denote the mod-2 Steenrod algebra. This algebra is defined to be the graded algebra over the prime field $\mathbb F_2,$ generated by the Steenrod squares $Sq^i,$ in grading $i\geqslant 0,$ subject to the Adem relations and $Sq^0  = 1$ (see Steenrod and Epstein \cite{SE}). From a topological point of view, the Steenrod algebra is the algebra of stable cohomology operations for ordinary cohomology $H^*$ over $\mathbb F_2.$ Let us denote by $P_s:= \mathbb{F}_2[x_1,x_2,\ldots ,x_s] = \bigoplus_{d\geqslant 0}(P_s)_d$ the polynomial algebra over $\mathbb F_2$, which can be regarded as a graded left module over $\mathscr A$. The grading is by the degree of the homogeneous terms $(P_s)_d$ of degree $d$ in $s$ variables with the degree of each $x_i$ being $1.$ The algebra $P_s$ arises as the cohomology with $\mathbb F_2$-coefficients of the elementary abelian 2-group $\mathbb F_2^{s}$ of rank $s.$ As well known, the Steenrod algebra acts by the composition of linear operators on $P_s$ and the action of the Steenrod squares $Sq^i: (P_s)_d\to (P_s)_{d+i}$  is determined by the Cartan formula and its elementary properties (see also \cite{SE}, \cite{R.W2}).

A homogeneous element $f\in (P_s)_d$ in $\mathscr A$-module $P_s$  is {\it hit} if there is a finite sum $f = \sum_{j > 0}Sq^j(f_j),$  where the homogeneous elements $f_j\in P_s$ have grading strictly less than $d.$ Let us denote by $QP_s:= \mathbb F_2 \otimes_{\mathscr A} P_s$ the quotient of the left $\mathscr A$-module $P_s$ by the hit elements in $\mathscr A^+\cdot P_s,$ where $\mathscr A^+$ denotes the augmentation ideal of $\mathscr A$ and $\mathbb F_2$ is viewed as a right $\mathscr A$-module concentrated in grading $0.$ Then, $QP_s$ is a graded vector space over $\mathbb F_2$ and a basis for $QP_s$ lifts to a minimal generating set for $P_s$ as a module over $\mathscr A.$ It should be noted that this space is also considered as a form modular representation of $GL_s$ over $\mathbb F_2.$ We are interested in the \textit{hit problem} of finding a monomial basis of $QP_s$ in each $s$ and degree $n\geq 0.$ This is the same as the problem of determining of a minimal set of generators for $\mathscr A$-module $P_s.$ Peterson~\cite{F.P}, Wood~\cite{R.W}, Singer~\cite {W.S1}, and Priddy~\cite{S.P} laid the first foundation for the study of the hit problem and pointed out its relationship to several classical problems in the homotopy theory such as cobordism theory of manifolds, modular representation theory of the general linear group, and stable homotopy type of classifying spaces of finite groups. Later, many researchers have been interested in discovering these hit problems (see the works of Boardman~\cite{J.B}, Bruner-H\`a-H\uhorn ng~\cite{B.H.H}, Crabb and Hubbuck \cite{C.H},  H\uhorn ng~\cite{V.H2}, Kameko~\cite{M.K}, Silverman~\cite{J.S}, Silverman and Singer~\cite{S.S}, Walker and Wood \cite{W.W, W.W1, W.W2}, the present author \cite{P.S}-\cite{D.P9}, Sum \cite{N.S1}-\cite{N.S6} and others), but it was a well known unresolved problem for $s\geq 5.$

For a non-negative integer $d,$ denote by $(QP_s)_d$ the subspace of $QP_s$ consisting of all the classes represented by the homogeneous polynomials in $(P_s)_d.$  One of the extremely useful tools for studying the hit problem is Kameko's homomorphism \cite{M.K}: $\widetilde {Sq^0_*}  = (\widetilde {Sq^0_*})_{(s, 2d+s)}: (QP_s)_{2d+s} \to (QP_s)_d$, which is induced by the $\mathbb F_2$-linear map $\psi: P_s \longrightarrow P_s,$ given by
$$ \psi(x_1^{n_1}x_2^{n_2}\ldots x_s^{n_s}) = \begin{cases}{x_1^{\frac{n_1-1}{2}}x_2^{\frac{n_2-1}{2}}\ldots x_s^{\frac{n_s-1}{2}}}&\text{if } n_1, n_2,\ldots, n_s \mbox{ odd},\\
0 &\text{otherwise},
\end{cases}$$
for any monomial $x_1^{n_1}x_2^{n_2}\ldots x_s^{n_s}\in P_s.$ It is straightforward to see that $\psi$ is not an $\mathscr{A}$-homomorphism, but $\psi Sq^{2i}=Sq^{i} \psi$ and $\psi Sq^{2i+1} = 0$ for any $i\geqslant 0.$ The structure of $QP_s$ was systematically studied for the cases $s\leqslant 4$ (see \cite{J.B, M.K, F.P, N.S4}). Specifically, by the works of Wood \cite{R.W} and Sum \cite{N.S6}, it is sufficient to determine $QP_s$ at each degree $d$ of the form:
\begin{equation} \label{ct1.1}
d = r(2^t-1) + 2^tm,
\end{equation}
whenever $r,\, t,\, m$ are non-negative integers satisfying $1\leqslant r = \mu(d)\leqslant s$ and $\mu(m) < r$ (see Subsection \ref{s2.2}). Here, $\mu(n)$ denotes the smallest number $k$ such that $n$ can be written in the form $\sum_{1\leqslant i \leqslant k}(2^{u_i}-1),$ where $u_i > 0.$ For $r = s-1,$ the problem was investigated by Crabb and Hubbuck~\cite{C.H}, Nam~\cite{T.N}, Repka and Selick~\cite{R.S}, Mothebe \cite{M.M1}, Sum \cite{N.S2, N.S4} and by us \cite{P.S, P.S2}. For $r = s = 5,$  it is known by Sum \cite{N.S8, N.S6} with $m = 5, 10$ and by the present writer \cite{D.P1}-\cite{D.P9} with $m = 6, 8, 18, 22, 42.$ So far there is no a completely answer for the case $s = 5$. It should be noted that solving special cases plays an important role in finding general solutions to the problem. By these reasons, our motivation to write up the present paper is to continue studying the hit problem in the 5-variable case. More specifically, we will explicitly describe $QP_5$ at degrees in \eqref{ct1.1} when $r = s - 2 = 3$ and $m= 1.$  

\newpage
We begin our work with the following, which is one of our main results. 

\begin{thm}\label{dlc-1}
Let $d = 3(2^t-1) + 2^t$ with $t$ a positive integer. The dimension of the $\mathbb F_2$-vector space $(QP_5)_{d}$ is determined by the following table:

\centerline{\begin{tabular}{c|cccccc}
$d = 3(2^t-1) + 2^t$  &$t=1$ & $t=2$ & $t=3$ & $t=4$ & $t  =5$ & $t\geqslant 6$\cr
\hline
\ $\dim(QP_5)_d$ & $46$ & $250$ & $645$ &$945$ &$1115$ & $1116$ \cr
\end{tabular}}
\end{thm}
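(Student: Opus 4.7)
The plan is to follow the standard framework for the Peterson hit problem of five variables: split $(QP_5)_d = (QP_5^0)_d\oplus (QP_5^+)_d$, where $(QP_5^0)_d$ is spanned by classes of monomials having at least one vanishing exponent and $(QP_5^+)_d$ by classes of monomials with all five exponents positive. The two summands will be handled by different techniques and the dimensions assembled at the end.

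For $(QP_5^0)_d$ there is the obvious isomorphism
$$(QP_5^0)_d \;\cong\; \bigoplus_{k=0}^{4}\binom{5}{k}\,(QP_k^+)_d,$$
which reduces the computation to the cases $s\leqslant 4$. All of these are already known from the work of Peterson, Kameko and Sum, and since $\mu(d)=3$ is independent of $t$, the required values $\dim(QP_k^+)_d$ for $k\leqslant 4$ at the generic degree $d=3(2^t-1)+2^t$ may be read off directly from the published tables; the binomial sum is then elementary.

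For $(QP_5^+)_d$ the odd parity of $d$ makes Kameko's squaring
$$\widetilde{Sq^0_*}\colon (QP_5)_d\longrightarrow (QP_5)_{(d-5)/2}\;=\;(QP_5)_{2^{t+1}-4}$$
defined, and surjective. Restricted to $(QP_5^+)_d$ it fits into a short exact sequence whose right-hand term is trivial for $t=1$ (since $(QP_5)_0=\mathbb F_2$) and for $t\geqslant 2$ is a smaller-degree instance that is either tabulated or reducible by the same procedure. The kernel of $\widetilde{Sq^0_*}$ consists of classes of admissible monomials with at least one even exponent; stratifying $(QP_5^+)_d$ by the weight vector $\omega=(\omega_1,\omega_2,\ldots,\omega_{t+1})$ of admissible monomials, this kernel is precisely the sum of those strata with $\omega_1<5$. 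For each such $\omega$ compatible with $d$, I would enumerate the admissible monomials explicitly by means of the Cartan formula and the admissibility criterion of Kameko and Walker-Wood, then propose a candidate $\mathbb F_2$-basis.

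The central obstacle will be establishing linear independence of these candidate bases modulo $\mathscr A^+\cdot P_5$; that is, ruling out that any non-trivial combination of the listed monomials is hit. This step is carried out by an explicit, and for $t\geqslant 2$ quite lengthy, case analysis: from a hypothetical relation $f=\sum_{j>0}Sq^j(g_j)$ with $\deg g_j<d$, one expands via the Cartan formula and derives a contradiction by comparing coefficients of suitably chosen admissibles, using the $GL_5(\mathbb F_2)$-symmetry on $QP_5$ to cut down the caseload. The weight strata with many large $\omega_i$ entries demand the most delicate verification and constitute the true engine of the argument. Summing the contributions from $(QP_5^0)_d$ and from the two-term filtration of $(QP_5^+)_d$ produces the dimensions $46,250,645,945,1115$ and $1116$ of the theorem; the stabilisation at $t=6$ reflects the point at which the length-$(t+1)$ weight vectors decouple and the list of admissible generators freezes.
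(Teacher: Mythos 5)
Your overall skeleton matches the paper's: split off $(QP_5^0)_d$, use the surjectivity of Kameko's $\widetilde{Sq^0_*}$ to write $(QP_5)_d\cong \mathrm{Ker}(\widetilde{Sq^0_*})\oplus (QP_5)_{2^{t+1}-4}$ with the target read off from prior work, and then enumerate admissible monomials in the kernel by weight vector. But the step you yourself flag as ``the true engine of the argument'' --- linear independence of the candidate basis modulo $\mathscr A^+\cdot P_5$ --- is where your proposal has a genuine gap. You propose to start from a hypothetical relation $f=\sum_{j>0}Sq^j(g_j)$, expand by the Cartan formula, and ``compare coefficients of suitably chosen admissibles.'' This is not a workable procedure: the $g_j$ are arbitrary lower-degree polynomials, so there is nothing to expand against, and deciding which monomials are ``admissible'' is precisely what is at stake, so comparing coefficients of admissibles is circular. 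The paper's actual tool is different and essential: the weight-preserving $\mathscr A$-module projections $p_{(i;I)}\colon P_5\to P_4$ (Definition \ref{dndc} and Lemma \ref{bdPS}). Applying $p_{(i;I)}$ to a putative relation $\mathcal S\equiv_{\omega}0$ transports it into $P_4$, where the admissible basis in the relevant degree is completely known from Sum's work, and the vanishing of the resulting coefficients forces all $\gamma_k=0$. Without this (or an equivalent device such as a family of linear functionals vanishing on hit elements), the independence step cannot be carried out, and the dimensions in the table cannot be certified as lower bounds.

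Two smaller points. First, you leave open which weight strata actually occur in the kernel; the paper's Lemma \ref{bdvtt} shows by induction on $t$, using the minimal spike $x_1^{2^{t+1}-1}x_2^{2^t-1}x_3^{2^t-1}$ and Singer's criterion, that every admissible monomial representing a nonzero class in $\mathrm{Ker}(\widetilde{Sq^0_*})$ has the single weight vector $\omega_{(5,t)}=(3,3,\ldots,3,1)$. This is what collapses the enumeration to one stratum; without it your ``each such $\omega$ compatible with $d$'' covers a substantially larger search space. Second, your explanation of the stabilisation at $t\geqslant 6$ is misattributed: the kernel part $\dim(QP_5^0)_d+\dim QP_5^+(\omega_{(5,t)})=195+270$ already stabilises at $t=4$; what stabilises only at $t=6$ is the image $\dim(QP_5)_{2^{t+1}-4}$ (going $480,650,651$), which is imported from the earlier computations rather than produced by the degree-$d$ generator list freezing.
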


Noting that if $t =0$ then $d = 3(2^t-1) + 2^t = 1,$ and we can easily claim that $(QP_5)_{1}$ has dimension $5.$ The theorem has been proven by Sum \cite{N.S8, N.S6} for $t =1$ and by Moetele-Mothebe \cite{M.M2} for $t = 2.$ However, in this article, we prove the case $t = 2$ in another way (see Subsection \ref{cmdlc-1}). Now, it should be noticed that Kameko's map $\widetilde {Sq^0_*} = (\widetilde {Sq^0_*})_{(5, d)}: (QP_5)_{d} \longrightarrow (QP_5)_{\frac{d-5}{2}}$ is an epimorphism of $\mathbb F_2$-vector spaces. From this and our previous results \cite{P.S, P.S2}, in order to prove the theorem, we only need to determine the kernel of $(\widetilde {Sq^0_*})_{(5, d)}.$ The calculations are quite long and complicated. Our method is based on the Kameko homomorphism and our recent results in \cite{P.S, P.S2}.

Let $GL_s:= GL(s, \mathbb F_2)$ be the general linear group of rank $s$ over $\mathbb{F}_2.$ As it is known, this $GL_s$ acts regularly on $P_s$ by matrix substitution. Further, the two actions of $\mathscr A$ and $GL_s$ upon $P_s$ commute with each other; hence there is an inherited action of $GL_s$ on $QP_s.$ This shows that the problem we are interested in is one of the useful tools for studying the modular representation of $GL_s$ over $\mathbb F_2$. Moreover, it is applied to describe the $\mathbb F_2$-cohomology of the Steenrod algebra, ${\rm Ext}_{\mathscr A}^{*, *}(\mathbb F_2, \mathbb F_2)$ through Singer's transfer homomorphism \cite{W.S1}. Computing explicitly this cohomology is one of the problems of great importance in stable homotopy theory. But despite intensive investigation for nearly half a century, its structure remains elusive. The Singer transfer \cite{W.S1} is a linear transformation $ \varphi_s:\mbox{Tor}^{\mathscr{A}}_{s, s+*}(\mathbb{F}_2, \mathbb{F}_2)\longrightarrow (QP_s)_*^{GL_s}$, from the $s$-th homology group of $\mathscr A$ to the space of $GL_s$-invariants of $(QP_s)_*$.  Dually, let $PH_*(\mathbb F_2^{s})$ be the subspace of $H_*(\mathbb F_2^{s})$  consisting of all elements that are annihilated by all positive degree Steenrod squares, then there is an induced action of $GL_s$ on $PH_*(\mathbb F_2^{s}),$ and we have an $\mathbb F_2$-linear map from the coinvariant elements of $PH_*(\mathbb F_2^{s})$ to mod-2 cohomology group Ext$_{\mathscr{A}}^{s,s+*}(\mathbb F_2,\mathbb F_2)$ of the Steenrod algebra, which is induced over the $E_2$-term of the Adams spectral sequence by the geometrical transfer map $\Sigma^{\infty}(B(\mathbb F_2^{s})_{+})\longrightarrow \Sigma^{\infty}(S^{0})$ in stable homotopy theory (see also Mitchell \cite{S.M}). These transfers can be played a key role in the study of the Kervaire invariant one problem (see Browder \cite{W.B}, Mahowald \cite{Mahowald}, Minami \cite{N.M, N.M2}). The Kervaire invariant was first introduced by Browder's work \cite{W.B}, where he shows that the classes $h_j^{2}\in {\rm Ext}_{\mathscr{A}}^{2,2^{j+1}}(\mathbb F_2,\mathbb F_2)$ are the permanent cycles in the classical Adams spectral sequence at the prime 2, if and only if smooth framed manifolds of Kervaire invariant one exist only in dimensions $2^{j+1}-2.$ The hypothetical element in stable $2^{j+1}-2$-stem, $\pi^{S}_{2^{j+1}-2},$ represented by such a framed manifold is denoted by $\theta_j.$ These $\theta_j$ are known to be exist for $j\leq 5$ (see also Lin-Mahowald \cite{L.M}). Many open issues in Algebraic and Differential Topology depend on knowing whether or not the Kervaire invariant one elements $\theta_j$ for $j\geq 6.$ In 2016, Hill, Hopkins, and Ravenel \cite{Hill} indicated that these elements do not exist for $j\geq 7$. So far the case $j = 6$ is still no answer. Thus, smooth framed manifolds of Kervaire invariant one therefore exist only in dimensions $2, 6, 14, 30, 62,$ and possibly $126.$

Going back to the Singer transfer,  in \cite{W.S1}, Singer claimed that $\varphi_s$ is an isomorphism for $s =1,2$ and  the "total" dual algebraic transfer $\bigoplus_{s\geq 0}\varphi_s^{*}$ is an algebraic homomorphism. Afterwards, Boardman \cite{J.B},  with additional calculations by Kameko \cite{M.K}, showed that $\varphi_3$ is also an isomorphism. Additionally, in higher rank, Singer states that the transfer homomorphism is not a monomorphism in bidegree $(5,\, 14)$ and made the following conjecture.

\begin{conj}\label{gtSinger}
The homological transfer $\varphi_s$ is an epimorphism for any $s > 0.$
\end{conj}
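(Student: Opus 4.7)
The conjecture is known in full only for $s \leq 3$ and in scattered cases for $s = 4, 5$; a completely general attack seems out of reach, so my plan is to verify it in the family of bidegrees $(5,\, 5+d)$ with $d = 3(2^t-1) + 2^t$, where Theorem \ref{dlc-1} has already pinned down the cohit module. The scheme has four steps. First, use the explicit monomial bases produced in the proof of Theorem \ref{dlc-1} to list an $\mathbb F_2$-basis of $(QP_5)_d$ in each of the relevant degrees. Second, cut down by the action of the symmetric group $\Sigma_5$ and of a transvection which together with $\Sigma_5$ generates $GL_5$, to obtain the invariant subspace $(QP_5)_d^{GL_5}$. Third, read off the target group $\mathrm{Ext}^{5,\,5+d}_{\mathscr A}(\mathbb F_2, \mathbb F_2)$ from published computations (Lin, Chen, Bruner, Nakamura, Tangora) in the relevant range.

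The heart of the argument is the fourth step: realise every class of $(QP_5)_d^{GL_5}$ as the image of some element of $\mathrm{Tor}^{\mathscr A}_{5,\, 5+d}(\mathbb F_2, \mathbb F_2)$ under $\varphi_5$. I would first exploit naturality with respect to Kameko's operation: the squaring $Sq^0$ on $\mathrm{Ext}$ and Kameko's homomorphism $\widetilde{Sq^0_*}$ on $QP_5$ intertwine the transfer, so once the conjecture is verified in a ``seed'' degree it propagates along the tower $d \mapsto 2d+5$. For the residual degrees I would lift chosen basis elements of $(QP_5)_d^{GL_5}$ to explicit cycles in the lambda algebra and identify them with known generators of $\mathrm{Ext}$; when the Ext side is already small, a pure dimension comparison suffices, since $\dim \mathrm{Ext}^{5,\,5+d}_{\mathscr A}(\mathbb F_2, \mathbb F_2) \leq \dim(QP_5)_d^{GL_5}$ combined with a surjective $\varphi_5$ on the top Kameko summand forces surjectivity on the rest.

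The main obstacle is almost certainly step four. Even granting Theorem \ref{dlc-1}, extracting the $GL_5$-invariants from a basis that can have up to $1116$ elements is a long linear-algebra computation, and the lifting problem of realising an invariant class as the image of the transfer is delicate: in bidegrees where $\mathrm{Ext}$ contains classes detected only via nontrivial differentials in the May spectral sequence one cannot argue purely by dimension count. A subsidiary difficulty is that the conjecture fails with ``monomorphism'' in place of ``epimorphism'' (already at bidegree $(5,\,14)$, per Singer \cite{W.S1}), so one must carefully separate the expected surjectivity from non-injectivity phenomena when interpreting the numerical comparisons; in particular, candidates in $(QP_5)_d^{GL_5}$ that arise from the kernel of $\varphi_5$ on the ``lower'' Kameko piece must be tracked and excluded before concluding.
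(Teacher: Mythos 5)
Your steps one through three do track the paper's framework (explicit admissible bases from Theorem \ref{dlc-1}, reduction to $\Sigma_5$- and then $GL_5$-invariants via the transvection, and comparison with the published Ext charts of Chen \cite{T.C}, Lin \cite{W.L} and Tangora \cite{Tangora}), but your step four is aimed at a problem that never arises, and the step that actually carries the argument is missing. The paper's verification of the conjecture in the bidegrees $(5,\,5+d)$ with $d=3(2^t-1)+2^t$ rests entirely on Theorem \ref{dlc-2}: a direct computation shows $(QP_5)^{GL_5}_{d}=0$ for every $t>0$, obtained by using that Kameko's homomorphism is an epimorphism of $GL_5$-modules onto $(QP_5)_{2^{t+1}-4}$ (whose $GL_5$-invariants are already known to vanish) and then computing the invariants of the kernel, first under $\Sigma_5$ and then under the extra generator $\tau_5$. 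Since the relevant Ext group $\mathrm{Ext}^{5,\,2^{t+2}+2}_{\mathscr A}(\mathbb F_2,\mathbb F_2)$ is also zero, $\varphi_5$ is an isomorphism between two trivial spaces and there is nothing to lift. Your proposed ``heart of the argument'' --- realising invariant classes as transfer images via the lambda algebra, propagating along the Kameko tower, or tracking kernel classes --- is therefore vacuous here; what you would actually have to supply is the long invariant-theoretic computation showing the invariant space is zero, which your plan defers to step two without acknowledging that this is where essentially all the work lies.

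A further concrete defect: your dimension-count claim, that $\dim \mathrm{Ext}^{5,\,5+d}_{\mathscr A}(\mathbb F_2,\mathbb F_2)\leq \dim(QP_5)_d^{GL_5}$ together with surjectivity on the top Kameko summand ``forces surjectivity on the rest,'' does not work as stated. An inequality in that direction cannot force surjectivity of $\varphi_5$ without injectivity, which you yourself note fails already in bidegree $(5,14)$ by Singer \cite{W.S1}. In the degrees at hand the issue is moot only because both sides vanish, but as a general mechanism for the residual degrees it is not sound. Finally, note that the statement you were asked to address is the full conjecture for all $s>0$; like the paper, your plan at best verifies it in one family of bidegrees for $s=5$, so it should be presented as such and not as an approach to the conjecture itself.
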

 
This prediction is known to be true for $s\leqslant 3$. Moreover, it can be verified for $s  = 4$ by using the results in Sum \cite{N.S1, N.S4}. However, when $s \geqslant 5,$ it is an open problem. Recently, some authors have been studied the conjecture for $s = 4, 5$ (see Bruner-H\`a-H\uhorn ng \cite{B.H.H}, H\uhorn ng \cite{V.H2}, Ch\ohorn n-H\`a \cite{Chon-Ha1, Chon-Ha2}, H\`a \cite{Ha}, Nam \cite{T.N2}, the present author \cite{D.P1}, \cite{D.P5}-\cite{D.P12}, Sum \cite{N.S8, N.S5, N.S7, N.S6} and others). In the present work, by using techniques of the hit problem of five variables, we investigate Conjecture \ref{gtSinger} in bidegree $(5,\, d+5),$ where $d$ is determined as in Theorem \ref{dlc-1}. This gives an efficient method to study the Singer transfer and it is different from that of Singer \cite{W.S1}. More explicitly, based on this approach, we obtain the following.

\begin{thm}\label{dlc-2}
For each positive integer $t,$ the space of $GL_5$-invariants $(QP_5)^{GL_5}_{3(2^t-1) + 2^t}$ is trivial. 
 \end{thm}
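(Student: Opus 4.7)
The plan is to reduce the computation of $(QP_5)^{GL_5}_d$ to a finite linear algebra problem using the admissible monomial basis of $(QP_5)_d$ furnished by Theorem \ref{dlc-1}, then eliminate nontrivial invariants by applying a single extra generator of $GL_5$. Since $GL_5$ is generated by the symmetric group $\Sigma_5$ together with one transvection, I would first group the admissible generators into $\Sigma_5$-orbits, stratified by the weight vector $\omega$ of each representative admissible monomial. For each orbit $\mathcal{O}$ form the symmetrized class $p_{\mathcal{O}} = \sum_{x\in \mathcal{O}}[x]$ in $(QP_5)_d$; any $\Sigma_5$-invariant then has the form $\zeta = \sum_{\mathcal{O}}\gamma_{\mathcal{O}}\, p_{\mathcal{O}}$ with $\gamma_{\mathcal{O}}\in \mathbb F_2$. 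This immediately parametrizes $(QP_5)^{\Sigma_5}_d$ from the explicit basis of Theorem \ref{dlc-1}.

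Next I would impose full $GL_5$-invariance by applying the transvection $\rho\in GL_5$ defined by $\rho(x_1)=x_1+x_2$ and $\rho(x_j)=x_j$ for $j\geq 2$. Expanding $\rho(\zeta)$ via the binomial theorem and rewriting each resulting monomial in the admissible basis (using the hit relations underlying Theorem \ref{dlc-1} and the bases in \cite{P.S, P.S2}) translates the condition $\rho(\zeta)\equiv \zeta$ modulo hit elements into a homogeneous linear system over $\mathbb F_2$ in the unknowns $\gamma_{\mathcal{O}}$. The target is to show that this system forces $\gamma_{\mathcal{O}}=0$ for every orbit.

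A useful structural simplification is Kameko's homomorphism $(\widetilde{Sq^0_*})_{(5,d)}$, which is a $GL_5$-equivariant epimorphism; hence any $GL_5$-invariant in degree $d$ projects to a $GL_5$-invariant in degree $(d-5)/2 = 2^{t+1}-4$, a degree accessible to the five-variable results in \cite{P.S, P.S2, N.S6}. If the latter invariants are known to vanish, one concludes that $\zeta$ must lie in $\ker(\widetilde{Sq^0_*})_{(5,d)}$, cutting the candidate subspace down drastically. The base cases $t=1,2$ are covered by \cite{N.S6, N.S8, M.M2}. For $t\geq 6$ the admissible basis of $(QP_5)_d$ stabilizes (dimension $1116$), so a single uniform calculation disposes of all sufficiently large $t$; the remaining cases $t=3,4,5$ are treated one by one with the same machinery.

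The main obstacle is sheer computational bulk: the symmetrization followed by the transvection analysis produces many monomials that must be returned to admissible form through the hit relations, and while the weight-vector stratification contains the combinatorics, verifying at each stratum that no accidental cancellation creates a genuine invariant is delicate. The explicit bases of Theorem \ref{dlc-1} together with the kernel descriptions in \cite{P.S, P.S2} are designed precisely to make this reduction feasible and to rule out such accidental invariants, so the argument ultimately turns on carefully tabulating the action of $\rho$ on the orbit generators and confirming that the resulting system is of full rank in every weight block.
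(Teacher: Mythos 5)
Your overall route --- reduce to ${\rm Ker}((\widetilde{Sq^0_*})_{(5,d)})$ via the $GL_5$-equivariance of Kameko's homomorphism and the known vanishing of $(QP_5)^{GL_5}_{2^{t+1}-4}$, then compute $\Sigma_5$-invariants from the admissible basis of Theorem \ref{dlc-1} and kill them with the extra transvection $\rho(x_1)=x_1+x_2$ --- is exactly the paper's. The genuine gap is in your parametrization of the $\Sigma_5$-invariants. You assert that every $\Sigma_5$-invariant is an $\mathbb F_2$-combination of full orbit sums $p_{\mathcal O}=\sum_{x\in\mathcal O}[x]$. That would be correct if $\Sigma_5$ literally permuted the admissible classes, i.e.\ if $(QP_5)_d$ were a permutation module on the admissible basis. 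It is not: a transposition $\tau_i$ applied to an admissible monomial typically produces an inadmissible monomial, whose reduction modulo hit elements is a sum of several admissible ones, so the $\Sigma_5$-submodule spanned by an ``orbit'' of classes is in general not a permutation module and its invariants need not be the orbit sum.

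The paper's own computations exhibit both failure modes of your ansatz. In degree $13$ the $55$-dimensional $\Sigma_5$-submodule $\mathcal M_1$ satisfies $\mathcal M_1^{\Sigma_5}=0$; a nonzero permutation module over $\mathbb F_2$ always has nonzero invariants (the orbit sums), so your scheme would predict spurious invariants there. Conversely, for $t\geqslant 4$ the $45$-dimensional module $\Sigma_5(b_{t,1})$ has a \emph{two}-dimensional invariant space spanned by the partial sums $\sum_{k=1}^{15}b_{t,k}$ and $\sum_{k=16}^{45}b_{t,k}$, which the full-orbit-sum ansatz would miss entirely. Missing invariants at the $\Sigma_5$ stage is fatal: an overlooked $\Sigma_5$-invariant could survive the transvection test and yield a nonzero $GL_5$-invariant, invalidating the conclusion. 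The repair is what the paper actually does: decompose $QP_5(\omega_{(5,t)})$ into explicit $\Sigma_5$-submodules and solve the linear system $\tau_i(f)\equiv f$, $1\leqslant i\leqslant 4$, on each summand to find the true invariant space, rather than assuming it is spanned by orbit sums. With that corrected list in hand, your final step (expanding $\rho(\zeta)+\zeta$ in the admissible basis and checking the resulting system has only the trivial solution) proceeds as in the paper.
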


It is known that Kameko's map $ \widetilde {Sq^0_*} = (\widetilde {Sq^0_*})_{(5, 3(2^t-1) + 2^t)}: (QP_5)_{3(2^t-1) + 2^t} \longrightarrow (QP_5)_{2^{t+1} - 4}$ is an epimorphism of $GL_5$-modules. On the other side, following \cite{N.S5}, $(QP_5)^{GL_5}_{2^{t+1} - 4}$ is trivial,  for all $t > 0$.  So, to prove this theorem, we only need to determine the invariant $({\rm Ker}((\widetilde {Sq^0_*})_{(5, 3(2^t-1) + 2^t)}))^{GL_5}.$ The computations are based on a monomial basis of $(QP_5)_{3(2^t-1) + 2^t},$ which are given in the proof of Theorem \ref{dlc-1}.

By using calculations by Chen \cite{T.C}, Lin \cite{W.L} and Tangora \cite{Tangora} and passing to the dual, deduce that ${\rm Tor}^{\mathscr{A}}_{5, 2^{t+2}+2}(\mathbb{F}_2, \mathbb{F}_2) = 0$ for all $t > 0.$ This together with Theorem \ref{dlc-2} imply that the homological transfer $ \varphi_5: {\rm Tor}^{\mathscr{A}}_{5, 2^{t+2}+2}(\mathbb{F}_2, \mathbb{F}_2)\to (QP_5)^{GL_5}_{3(2^t-1) + 2^t}$  is an isomorphism. As a consequence, we have immediately the following.

\begin{corl}
Conjecture \ref{gtSinger} is true for the rank 5 and the degree $3(2^t-1) + 2^t$ with $t$ an arbitrary positive integer.
\end{corl}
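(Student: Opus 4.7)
The plan is to deduce the corollary from Theorem \ref{dlc-2} together with well-known Ext computations for the Steenrod algebra. First I would pin down the bidegree: since the fifth Singer transfer at internal degree $d$ lives in total bidegree $(5,\, s + d)$, and $s + d = 5 + 3(2^t-1) + 2^t = 2^{t+2} + 2$, the map whose surjectivity is to be established is
$$\varphi_5 \colon \mathrm{Tor}^{\mathscr{A}}_{5,\, 2^{t+2}+2}(\mathbb{F}_2, \mathbb{F}_2) \longrightarrow (QP_5)^{GL_5}_{3(2^t-1)+2^t}.$$

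Next, Theorem \ref{dlc-2} gives $(QP_5)^{GL_5}_{3(2^t-1) + 2^t} = 0$ for every positive integer $t$. Any homomorphism into the zero vector space is trivially surjective, so this alone already establishes Singer's conjecture in the asserted bidegrees. To strengthen the conclusion to the assertion that $\varphi_5$ is an isomorphism---matching the preceding discussion in the paper---I would then cite the Ext-algebra calculations of Chen \cite{T.C}, Lin \cite{W.L} and Tangora \cite{Tangora}, whose combined output yields $\mathrm{Ext}_{\mathscr{A}}^{5,\, 2^{t+2}+2}(\mathbb{F}_2,\mathbb{F}_2) = 0$ for all $t \geqslant 1$: Tangora's published chart for $s \leqslant 5$ covers the small values $t = 1, 2$, while the tabulations of Lin and Chen handle the generic stem $t \geqslant 3$. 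Dualising delivers the vanishing of $\mathrm{Tor}^{\mathscr{A}}_{5,\, 2^{t+2}+2}(\mathbb{F}_2,\mathbb{F}_2)$, so both source and target of $\varphi_5$ are zero and the map is an isomorphism a fortiori.

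There is no real obstacle at this step: all of the hard content is already absorbed into the proof of Theorem \ref{dlc-2}, which itself rests on Theorem \ref{dlc-1} and the explicit analysis of $\mathrm{Ker}\big((\widetilde{Sq^0_*})_{(5,\, 3(2^t-1) + 2^t)}\big)$. The only points demanding attention are the routine bookkeeping identifying the internal degree $3(2^t-1) + 2^t$ with the Adams-spectral-sequence stem $2^{t+2} + 2$, and the verification that the quoted Ext-vanishing results cover every $t \geqslant 1$ with no exceptional value slipping through; both are immediate once the relevant charts are consulted.
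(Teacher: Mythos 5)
Your proposal is correct and follows essentially the same route as the paper: identify the bidegree $5 + 3(2^t-1)+2^t = 2^{t+2}+2$, invoke Theorem \ref{dlc-2} for the vanishing of the target, and cite Chen, Lin and Tangora for the vanishing of $\mathrm{Ext}_{\mathscr A}^{5,\,2^{t+2}+2}(\mathbb F_2,\mathbb F_2)$ so that $\varphi_5$ is an isomorphism between zero spaces. Your added remark that surjectivity already follows from the vanishing of the target alone is a correct (minor) sharpening, but does not change the argument.
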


The paper contains five sections and is organized as follows: Background is provided in Section \ref{s2}. 
The proofs of Theorem \ref{dlc-1} and \ref{dlc-2} will be presented in Sections \ref{s3} and \ref{s4}, respectively.
All the admissible monomials of degree $d$ in Theorem \ref{dlc-1} are described in the Appendix.

\medskip\noindent
{\bf Acknowledgment}

This research is funded by Vietnam's National Foundation for Science and Technology Development (NAFOSTED) under grant number 101.04-2017.05.

The author appreciates anonymous referees for their careful corrections to, valuable comments on, and helpful suggestions to the last version of this paper.

The author would also like to express my profound gratitude to Professor Nguyen Sum for pointing out some errors in the original paper and his help in this work.

\section{Background}\label{s2}

In this section, we provide some known basis concepts, which might be useful in the next sections.

\subsection{The weight vector of a monomial}\label{s2.1}

Let us denote by $\alpha_j(d)$ the $j$-th coefficients in dyadic expansion of a non-negative integer $d.$ Then, we have $d = \alpha_0(d)2^0 + \alpha_1(d)2^1 + \cdots + \alpha_j(d)2^j+ \cdots,$ for $\alpha_j(d)\in \{0, 1\}$ with $j\geqslant 0.$ For a monomial $u = x_1^{a_1}x_2^{a_2}\ldots x_s^{a_s}\in P_s,$ we define two sequences associated with $u$ by
\begin{eqnarray}
\omega(u) &=&(\omega_1(u), \omega_2(u),\ldots,\omega_j(u),\ldots),\nonumber\\
\sigma(u) &=&(a_1, a_2, \ldots, a_s),\nonumber
\end{eqnarray}
where $\omega_j(u) :=\sum_{1\leqslant i\leqslant s}\alpha_{j-1}(a_i) \leqslant s,$ for all $j.$ The sequences $\omega(u)$ and $\sigma(u)$ are called the {\it weight vector} and the \textit{exponent vector} of the monomial $u,$ respectively. 

Let $\omega = (\omega_1, \omega_2, \ldots, \omega_i,\ldots)$ be a sequence of non-negative integers. The sequence $\omega$ is called a weight vector if $\omega_j = 0$ for $j\gg 0.$ By convention, $\deg(\omega) = \sum_{j\geq 1}2^{j-1}\omega_j,$ and the sets of all the weight vectors and the exponent vectors are given the left lexicographical order.

\subsection{The admissible monomial}\label{s2.2}

For a weight vector $\omega,$ we define $\deg \omega = \sum_{i\geqslant 1}2^{i-1}\omega_i.$ Let us denote by $P_s(\omega)$ the subspace of $P_s$ spanned by all monomials $x\in P_s$ such that $\deg(x) = \deg(\omega),\ \omega(x)\leqslant \omega,$ and by $P_s^-(\omega)$ the subspace of $P_s$ spanned by all monomials $x\in P_s$ such that $\deg(x) = \deg(\omega),\ \omega(x)< \omega.$ 

\begin{dn}[see \cite{M.K}, \cite{N.S4}]\label{dnKS}
Let $\omega$ be a weight vector and let $f, g$ be two homogeneous polynomials of the same degree in $P_s.$ 
\begin{enumerate}
\item [(i)] $f \equiv g,$ if and only if $(f  + g)\in \mathscr{A}^+\cdot P_s.$ When $f \equiv 0,$ then $f\in \mathscr{A}^+\cdot P_s.$ 
\item[(ii)] $f \equiv_{\omega} g$ if and only if $f, g\in P_s(\omega)$ and $(f  + g)\in ((\mathscr{A}^+\cdot P_s \cap P_s(\omega)) + P_s^-(\omega)).$ Specifically, $f$ is said to be {\it $\omega$-hit}, if $f \equiv_{\omega} 0.$ 
\end{enumerate}
\end{dn}
The Definition \ref{dnKS}(ii) is a modification of a definition in Sum \cite[Definition 2.3]{N.S4}. It can be easily seen that the binary relations  $\equiv$ and $\equiv_{\omega}$ on $P_s$ are equivalence ones.

\begin{dn}[see \cite{M.K}]\label{dnqhtt}
Let $x$ and $y$ be two monomials of the same degree in $P_s.$ We say that $x < y$ if and only if one of the following holds:
\begin{enumerate}
\item[(i)] $\omega(x) < \omega(y);$
\item[(ii)] $\omega(x) = \omega(y)$ and $\sigma(x) < \sigma(y).$
\end{enumerate}
\end{dn}

Our key instrument here is the following notion.

 \begin{dn}[see \cite{M.K}, \cite{N.S4}]
We say that a monomial $x$ in $P_s$ is \textit{inadmissible}, if there exist monomials $y_1, y_2,\ldots, y_m$ such that $y_t < x$ for $1\leqslant t\leqslant m$ and $x \equiv \sum_{1\leqslant t\leqslant m}y_t.$ Then, $x$ is said to be \textit{admissible}, if it is not inadmissible.
\end{dn}

Thus, the above definitions indicated that the set of all the admissible monomials of degree $d$ in $P_s$ is a minimal set of $\mathscr{A}$-generators for $P_s$ in degree $d.$ Therefore, $(QP_s)_d$ is an $\mathbb F_2$-vector space with a basis consisting of all the classes represent by the admissible monomials of degree $d$ in $P_s.$

\begin{dn}[see \cite{M.K}, \cite{N.S4}]\label{dniad}
A monomial $x$ in $P_s$ is said to be strictly inadmissible if and only if there exists monomials $y_1, y_2,\ldots, y_m$ such that $y_t < x$ for $1\leqslant t\leqslant m$ and 
$$ x = \sum\limits_{1\leqslant t\leqslant m}y_t + \sum\limits_{1\leqslant j\leqslant 2^r - 1}Sq^{j}(h_j),$$
where $r = {\rm max}\{i\in\mathbb Z: \omega_i(x) > 0\}$ and  suitable polynomials $h_j\in P_s.$
\end{dn}

Noticing that if $x$ is strictly inadmissible, then it is inadmissible. However, in general, the converse is not true. For instance, the monomial $x = x_1x_2^2x_3^2x_4^2x_5^2x_6$ in $P_6$ is inadmissible, but it is not strictly inadmissible.

\begin{theo}[see \cite{M.K}, \cite{N.S4}]\label{dlKS}
Let $x, y$ and $u$ be monomials in $P_s$ such that $\omega_i(x) = 0$ for $i > r >0, \omega_t(u)\neq 0$ and $\omega_i(u)=0$ for $i  > t > 0.$ 
\begin{enumerate}
\item[(i)] If $u$ is inadmissible, then $xu^{2^r}$ is also inadmissible.
\item[(ii)] If $u$ is strictly inadmissible, then $uy^{2^t}$ is also strictly inadmissible.
\end{enumerate}
\end{theo}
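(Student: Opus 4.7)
My plan is to build both parts from a single technical ingredient from the Cartan formula over $\mathbb F_2$: for any polynomial $g$ and any $n\geq 1$,
$$Sq^b(g^{2^n})=\begin{cases}(Sq^{b/2^n}g)^{2^n}&\text{if }2^n\mid b,\\ 0&\text{otherwise,}\end{cases}\qquad\text{in particular}\qquad (Sq^j g)^{2^n}=Sq^{j\cdot 2^n}(g^{2^n}).$$
Combined with additivity of Frobenius $f\mapsto f^{2^n}$ in characteristic two, this lets us push squaring through both addition and the Steenrod action. Both conclusions then reduce to bookkeeping with weight and exponent vectors.

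For part (ii), I would start from the strict-inadmissibility expression $u=\sum_p v_p+\sum_{1\leq j\leq 2^t-1}Sq^j(h_j)$ with $v_p<u$, multiply through by $y^{2^t}$, and apply Cartan to each $Sq^j(h_j\,y^{2^t})=\sum_{a+b=j}Sq^a(h_j)Sq^b(y^{2^t})$. By the divisibility criterion, $Sq^b(y^{2^t})=0$ for $0<b<2^t$, so only $b=0$ survives and $Sq^j(h_j)\cdot y^{2^t}=Sq^j(h_j y^{2^t})$. Hence
$$u y^{2^t}=\sum_p v_p y^{2^t}+\sum_{1\leq j\leq 2^t-1}Sq^j(h_j y^{2^t}).$$
A direct check using that $\omega(u)$ is supported in positions $\leq t$ gives $v_p y^{2^t}<u y^{2^t}$: in the case $\omega(v_p)<\omega(u)$ the first differing position $k$ satisfies $k\leq t$, and since $y^{2^t}$ contributes $0$ to weights in positions $\leq t$, the inequality transfers; in the case $\omega(v_p)=\omega(u)$, the monomial $v_p$ has all exponents below $2^t$, no carries occur in $v_p y^{2^t}$, and the exponent-vector inequality $\sigma(v_p)<\sigma(u)$ transfers as $\sigma(v_p y^{2^t})=\sigma(v_p)+2^t\sigma(y)<\sigma(u)+2^t\sigma(y)=\sigma(uy^{2^t})$. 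Because the top nonzero weight position $t'$ of $uy^{2^t}$ satisfies $t'\geq t$, the index range $1\leq j\leq 2^t-1\subseteq[1,2^{t'}-1]$ fits Definition \ref{dniad}, and $uy^{2^t}$ is strictly inadmissible.

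For part (i), I would start from $u=\sum_p v_p+\sum_{j\geq 1}Sq^j(f_j)$ with $v_p<u$, apply Frobenius to get $u^{2^r}=\sum_p v_p^{2^r}+\sum_j Sq^{j\cdot 2^r}(f_j^{2^r})$, and then expand $Sq^{j\cdot 2^r}(xf_j^{2^r})=\sum_{a+b=j\cdot 2^r}Sq^a(x)Sq^b(f_j^{2^r})$ via Cartan. By the divisibility criterion only $a=c\cdot 2^r$ contributes, so after solving for $x\cdot Sq^{j\cdot 2^r}(f_j^{2^r})$,
$$xu^{2^r}=\sum_p x v_p^{2^r}+\sum_{j\geq 1}Sq^{j\cdot 2^r}(xf_j^{2^r})+\sum_{j\geq 1,\,c\geq 1}Sq^{c\cdot 2^r}(x)\cdot(Sq^{j-c}f_j)^{2^r}.$$
The middle sum is explicitly hit. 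For each $xv_p^{2^r}$, because the exponents of $x$ are below $2^r$ there are no carries: the weights of $xu^{2^r}$ and $xv_p^{2^r}$ agree with $\omega(x)$ in positions $\leq r$ and are read off $\omega(u)$, $\omega(v_p)$ shifted into positions $>r$; the inequality $v_p<u$ therefore transfers to $xv_p^{2^r}<xu^{2^r}$ by the same two-case split as in part (ii).

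The main obstacle is the cross sum $\sum Sq^{c\cdot 2^r}(x)(Sq^{j-c}f_j)^{2^r}$ with $c\geq 1$. My plan is to rewrite each $Sq^{c\cdot 2^r}(x)$ in admissible form and regroup every resulting monomial as $x'\cdot w^{2^r}$ plus hit terms, using the divisibility identity once more. The key observation is that applying $Sq^{c\cdot 2^r}$ with $c\geq 1$ to a monomial $x$ whose weight sits in positions $\leq r$ forces some of the added degree $c\cdot 2^r$ to "leak'' into weight positions $>r$: consequently, in every surviving monomial either the portion of the weight in positions $\leq r$ strictly precedes $\omega(x)$, or the portion in positions $>r$ strictly precedes the shift of $\omega(u)$. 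Either way the product is $<xu^{2^r}$, modulo further hit terms obtained by iterating the Cartan expansion. I expect the cleanest execution to go by induction on $\deg u$, reapplying the theorem to the lower-degree pieces $Sq^{j-c}f_j$; this recursive combinatorial control of the cross terms is the most laborious step of the proof.
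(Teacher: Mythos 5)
The paper never proves this theorem---it is imported from Kameko's thesis and Sum's work---so your proposal has to stand on its own. Part (ii) does: the vanishing of $Sq^b(y^{2^t})$ for $0<b<2^t$, the two-case transfer of $v_p<u$ to $v_py^{2^t}<uy^{2^t}$ (no carries occur because the exponents of $u$, and of $v_p$ in the equal-weight case, lie below $2^t$, while in the unequal-weight case the first discrepancy of the weight vectors sits in a position $\leqslant t$ that multiplication by $y^{2^t}$ cannot disturb), and the observation that the range $1\leqslant j\leqslant 2^t-1$ sits inside the range demanded by Definition \ref{dniad} for $uy^{2^t}$ are all correct. Your treatment of the terms $xv_p^{2^r}$ and $Sq^{j2^r}(xf_j^{2^r})$ in part (i) is likewise fine.

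The gap is in part (i), exactly where you flag it. The claim that every monomial of the cross sum $Sq^{c2^r}(x)\,(Sq^{j-c}f_j)^{2^r}$ with $c\geqslant 1$ is smaller than $xu^{2^r}$ is only asserted, and the route you propose for establishing it---induction on $\deg u$, ``reapplying the theorem to the lower-degree pieces $Sq^{j-c}f_j$''---does not make sense: the theorem speaks of a single \emph{inadmissible monomial}, whereas $Sq^{j-c}f_j$ is a polynomial about which no admissibility hypothesis is available, so there is nothing to reapply the theorem to. The claim is nevertheless true and provable directly, with no induction and no further hit terms. A monomial $m$ of $Sq^{c2^r}(x)$, $x=\prod_l x_l^{a_l}$, has the form $\prod_l x_l^{a_l+b_l}$ with $\sum_l b_l=c2^r>0$ and, by Lucas, the binary digits of each $b_l$ dominated by those of $a_l$; since every $a_l<2^r$, every $b_l<2^r$. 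Let $j_0\leqslant r-1$ be the least bit position occupied by some $b_l$. For $j-1<j_0$ the bits $\alpha_{j-1}(a_l+b_l)$ coincide with $\alpha_{j-1}(a_l)$, while at position $j_0$ every $l$ with $\alpha_{j_0}(b_l)=1$ also has $\alpha_{j_0}(a_l)=1$ and hence $\alpha_{j_0}(a_l+b_l)=0$; therefore $\omega_j(m)=\omega_j(x)$ for $j\leqslant j_0$ and $\omega_{j_0+1}(m)<\omega_{j_0+1}(x)$. Multiplying by a $2^r$-th power does not alter $\omega_j$ for $j\leqslant r$, and $j_0+1\leqslant r$, so every monomial of $Sq^{c2^r}(x)\,(Sq^{j-c}f_j)^{2^r}$ has weight vector strictly below $\omega(xu^{2^r})$ already at position $j_0+1$, hence is $<xu^{2^r}$. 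With this lemma inserted in place of your heuristic, your argument for (i) closes.
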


Form now on, we use the following notations.

\begin{eqnarray}
P_s^0 &=& \langle\{x= x_1^{a_1}x_2^{a_2}\ldots x_s^{a_s}\in P_s\;|\;a_1a_2\ldots a_s = 0\}\rangle, \nonumber\\
P_s^+&=&\langle\{x= x_1^{a_1}x_2^{a_2}\ldots x_s^{a_s}\in P_s\;|\; a_1a_2\ldots a_s > 0\}\rangle.\nonumber
\end{eqnarray}
Then $P_s^0$ and $P_s^+$ are the $\mathscr{A}$-submodules of $P_s.$ Furthermore, we have a direct summand decomposition of the $\mathbb{F}_2$-vector spaces: $ QP_s = QP_s^0\,\bigoplus\, QP_s^+,$ where $QP_s^0:= \mathbb{F}_2\otimes_{\mathscr A} P_s^0$ and $QP_s^+:= \mathbb{F}_2\otimes_{\mathscr A} P_s^+.$

Let $QP_s(\omega)$ be the quotient of $P_s(\omega)$ by the equivalence relation $\equiv_\omega.$ According to Walker and Wood \cite{W.W2}, we have a filtration of $QP_s$:
$$ \{0\}\subseteq \cdots \subseteq P_s^{-}(\omega)/((\mathscr A^{+}\cdot P_s) \cap P_s^{-}(\omega)) \subseteq P_s(\omega)/((\mathscr A^{+}\cdot P_s) \cap P_s(\omega)) \subseteq \cdots \subseteq P_s/(\mathscr A^{+}\cdot P_s) = QP_s.$$
It should be noticed that this is not only a filtration of $QP_s$ as a vector space, but also as a $GL_s$-module. The inclusion of $P_s^{-}(\omega)$ into $P_s(\omega)$ induces the monomorphism $$P_s^{-}(\omega)/((\mathscr A^{+}\cdot P_s) \cap P_s^{-}(\omega)) \to  P_s(\omega)/((\mathscr A^{+}\cdot P_s) \cap P_s(\omega))$$ and the sequence is short exact:
$$ 0\to P_s^{-}(\omega)/((\mathscr A^{+}\cdot P_s) \cap P_s^{-}(\omega))  \to P_s(\omega)/((\mathscr A^{+}\cdot P_s) \cap P_s(\omega)) \to P_s(\omega)/(((\mathscr{A}^+\cdot P_s)\cap P_s(\omega)) + P_s^-(\omega))\to 0.$$
Combining this and Definition \ref{dnKS}(ii), one gets $$QP_s(\omega) = P_s(\omega)/((\mathscr{A}^+\cdot P_s\cap P_s(\omega)) + P_s^-(\omega)) \cong (P_s(\omega)/((\mathscr A^{+}\cdot P_s) \cap P_s(\omega)))/(P_s^{-}(\omega)/((\mathscr A^{+}\cdot P_s) \cap P_s^{-}(\omega))).$$
Following \cite{N.S6}, $QP_s(\omega)$ is also an $GL_s$-module, further this isomorphism is also an isomorphism of $GL_s$-modules. Combining this and the above filtration of $QP_s$, we have immediately $$\dim((QP_s)_d) = \sum_{\deg(\omega) = d}\dim(QP_s(\omega)),\ \ \dim((QP_s)^{GL_s}_d)\leq \sum_{\deg(\omega) = d}\dim(QP_s(\omega)^{GL_s}).$$

Throughout this paper, we will always denote the set of all admissible monomials of degree $d$ in  $P_s$ by $\mathscr{B}_s(d).$  We put
$$ \mathscr{B}_s^0(d) := \mathscr{B}_s(d)\cap (P_s^0)_d,\,\,\mathscr{B}_s^+(d) := \mathscr{B}_s(d)\cap (P_s^+)_d.$$
Then, if $\omega$ is a weight vector of degree $d,$ then we put
$$ \mathscr{B}_s(\omega) := \mathscr{B}_s(d)\cap P_s(\omega),\, \mathscr{B}_s^0(\omega) := \mathscr{B}_s(\omega)\cap (P^0_s)_d,\, \mathscr{B}_s^+(\omega) := \mathscr{B}_s(\omega)\cap (P^+_s)_d.$$ 
Next, for a polynomial $f\in P_s,$ we denote by $[f]$ the classes in $QP_s$ represented by $f.$ If $\omega$ is a weight vector and $f\in P_s(\omega),$ then denote by $[f]_\omega$ the classes in $QP_s(\omega)$ represented by $f.$ For a subset $\mathscr{B}\subset P_{s},$ we denote $[\mathscr B] = \{[f]\, :\, f\in \mathscr B\}.$ If $\mathscr B\subset P_s(\omega),$ then we set $[\mathscr B]_{\omega} = \{[f]_{\omega}\, :\, f\in \mathscr B\}.$ Thus, we see that $[\mathscr{B}_s(\omega)]_\omega,\, [\mathscr{B}^0_s(\omega)]_\omega$ and $[\mathscr{B}_s^+(\omega)]_\omega$ are respectively the bases of the $\mathbb{F}_2$-vector spaces $QP_s(\omega),\ QP_s^0(\omega):= QP_s(\omega)\cap (QP^0_s)_d$ and $QP_s^+(\omega) := QP_s(\omega)\cap (QP^+_s)_d.$

For a natural number $d,$ let $ \mu(d) = \mbox{min}\big\{r\in \mathbb N:\ d = \sum_{1\leqslant i\leqslant r}(2^{u_i}-1),\ u_i > 0\big\}.$ So, by Wood \cite{R.W}, $(QP_s)_d = 0$ if $\mu(d) > s,$ and by Sum \cite{N.S6}, $\mu(d) = r\leqslant s$ if and only if there exists uniquely a sequence of integers $u_1 > u_2 > \cdots > u_{r-1} \geqslant u_r > 0$ such that $d = \sum_{1\leqslant i\leqslant r}(2^{u_i} - 1).$ From these, we only need to study $(QP_s)_d$ in degrees $d$ satisfying $1\leqslant \mu(d) = r\leqslant s$; moreover $d$ is of the form \mbox{$r(2^t-1) + 2^t.m,$} whenever $t,\, m$ are non-negative integers such that $\mu(m) < r.$ Indeed, if $\mu(d) = r = 1,$ then $d$ has the form $d = 2^{u} - 1 = 1.(2^{u} - 1) + 2^{u}. 0 = r.(2^{u} - 1) + 2^{u}.m,$ where $u > 0$ and $\mu(m = 0) = 0 < 1.$ If $\mu(d) = r \geqslant 2,$ then by Sum \cite{N.S6}, $d$ has the form $\sum_{1\leqslant i\leqslant r}(2^{u_i} - 1),$ where $u_1 > u_2 > \cdots > u_{r-1} \geqslant u_r > 0.$ Consider $m = \sum_{1\leqslant j\leqslant r-1}(2^{u_j-u_r}-1).$ According to Sum \cite{N.S6}, we see that $\mu(m) = r-2 < r$ if $u_{r-1} = u_r$ and $\mu(m) = r-1 < r$ if $u_{r-1} > u_r.$ Further, $d = r(2^{t} - 1) + 2^{t}.m$ with $t = u_r.$  

The following homomorphisms will be used throughout the article. For $1\leqslant i\leqslant s,$ define the homomorphism $\rho_i: P_{s-1}\rightarrow P_s$ of algebras by substituting 
$$ \rho_i(x_j) = \left\{ \begin{array}{ll}
{x_j}&\text{if }\;1\leqslant j < i, \\
x_{j+1}& \text{if}\; i\leqslant j < s.
\end{array} \right.$$
The maps $\rho_i$ are also the monomorphism of $\mathscr A$-modules. We consider the set
$$ \mathcal{N}_s := \{(i; I)\;|\; I = (i_1,i_2,\ldots,i_r), 1\leqslant i < i_1<i_2 < \ldots < i_r\leqslant s, 0\leqslant r < s\},$$  
where by convention, $I = \emptyset,$ if $r = 0.$ Let $r = \ell(I)$ be the length of $I.$ 

\begin{dn}
Let $(i; I)\in\mathcal N_s, r = \ell(I),$ and let $u$ be an integer with $1\leqslant u\leqslant r.$ A monomial $x = x_1^{a_1}x_2^{a_2}\ldots x_{s-1}^{a_{s-1}}\in P_{s-1}$ is said to be $u$-compatible with $(i; I)$ if all of the following hold:
\begin{enumerate}
\item [(i)] $a_{i_1 - 1} =a_{i_2 - 1} = \ldots = a_{i_{(u-1)} - 1} = 2^{r} - 1,$
\item[(ii)] $a_{i_{u} - 1} > 2^{r} - 1,$
\item [(iii)] $\alpha_{r-t}(a_{i_{u} - 1}) = 1,\;\forall t,\ 1\leqslant t\leqslant u,$
\item[(iv)] $\alpha_{r-t}(a_{i_{t}-1}) = 1,\;\forall t,\ u < t \leqslant r.$
\end{enumerate}
\end{dn}
Clearly, a monomial $x$ can be $u$-compatible with a given $(i; I)\in \mathcal N_s$ for at most one value of $u.$ By convention, $x$ is $1$-compatible with $(i; \emptyset).$

\begin{dn}
Consider $(i; I)\in\mathcal{N}_s, 0 < r < s$ and put $ x_{(I, u)} = x_{i_u}^{2^{r-1} + 2^{r-2} +\, \cdots\, + 2^{r-u}}\prod\limits_{u < t\leqslant r}x_{i_t}^{2^{r-t}},$ for $1\leqslant u\leqslant r,\; x_{(\emptyset, 1)} = 1.$ For a monomial $x\in P_{s-1},$ we define the monomial $\phi_{(i; I)}(x)$ in $P_s$ by setting
 $$ \phi_{(i; I)}(x) = \left\{ \begin{array}{ll}
\dfrac{x_i^{2^{r} - 1}\rho_i(x)}{x_{(I, u)}}&\text{if there exist $u$ such that $x$ is $u$-compatible with $(i; I)$}, \\
0&\text{otherwise}.
\end{array} \right.$$
Then we have a linear transformation $\phi_{(i; I)}: P_{s-1}\rightarrow P_s.$ It is easily seen that  $\phi_{(i; \emptyset)} = \rho_i$ and that $\phi_{(i; I)}$ is not an  $\mathscr A$-homomorphism. Moreover, if  $\phi_{(i; I)}(x)\neq  0,$ then $\omega( \phi_{(i; I)}(x)) = \omega(x).$
\end{dn}

For a subset $U\subset P_{s-1},$ we denote by
$$ \begin{array}{ll}
\Phi^0(U) &= \bigcup\limits_{1\leqslant i \leqslant s}\phi_{(i; \emptyset)}(U) = \bigcup\limits_{1\leqslant i \leqslant s}\rho_i(U),\\
\Phi^+(U) &= \bigcup\limits_{(i; I)\in\mathcal{N}_s,\;0 < \ell(I) < s}\phi_{(i; I)}(U)\setminus P_s^0.
\end{array}$$
Then, it is easy to see that if $U$ is a minimal set of generators for $\mathscr A$-module $P_{s-1}$ in degree $d$, then $\Phi^0(U) $ is also a minimal set of generators for $\mathscr A$-module $P_s^0$ in degree $d.$

\begin{dn}\label{dndc}
For any $(i; I)\in\mathcal{N}_s,$ we define the homomorphism $p_{(i; I)}: P_s\rightarrow P_{s-1}$ of $\mathbb F_2$-algebras by substituting
$$p_{(i; I)}(x_j) = \left\{ \begin{array}{ll}
{x_j}&\text{if }\;1\leqslant j < i, \\
\sum\limits_{k\in I}x_{k-1}& \text{if}\; j = i,\\
x_{j-1}&\text{if}\; i < j \leqslant s.
\end{array} \right.$$
It should be noticed that $p_{(i; I)}$ is also a homomorphism of $\mathscr{A}$-modules. In particular, we have $p_{(i; \emptyset)}(x_i) = 0$ for $1\leqslant i\leqslant s$ and $p_{(i; I)}(\rho_i(u)) = u$ for any $u\in P_{s-1}.$ Moreover, the next technicality is important.
\end{dn}

\begin{bd}[see \cite{P.S}]\label{bdPS}
If $x$ is a monomial in $P_s,$ then $p_{(i; I)}(x)\in P_{s-1}(\omega(x)).$
\end{bd}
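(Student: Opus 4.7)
The plan is to prove this by direct computation, tracking the weight vector through the substitution. First, since $p_{(i;I)}$ is an algebra map sending each generator $x_j$ to a linear form (either $x_j$, $\sum_{k\in I}x_{k-1}$, or $x_{j-1}$), degree is preserved, so every monomial summand $m$ of $p_{(i;I)}(x)$ satisfies $\deg(m) = \deg(x) = \deg\omega(x)$. It therefore suffices to show that every such $m$ has $\omega(m)\leq \omega(x)$ in the left lexicographic order.

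Writing $x = x_1^{a_1}\cdots x_s^{a_s}$, I would expand
$$p_{(i;I)}(x) = \Bigl(\prod_{j<i} x_j^{a_j}\Bigr)\Bigl(\sum_{k\in I} x_{k-1}\Bigr)^{a_i}\Bigl(\prod_{j>i} x_{j-1}^{a_j}\Bigr).$$
Applying the Frobenius $(\sum y_k)^{2^r} = \sum y_k^{2^r}$ bit-by-bit to the binary expansion of $a_i$, one identifies $\bigl(\sum_{k\in I}x_{k-1}\bigr)^{a_i}$ with the sum, over all decompositions $a_i = \sum_{k\in I} n_k$ distributing the binary digits of $a_i$ among the elements of $I$, of $\prod_{k\in I}x_{k-1}^{n_k}$; in particular the $n_k$ always have pairwise disjoint binary supports. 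A typical summand of $p_{(i;I)}(x)$ is therefore
$$m = \Bigl(\prod_{j<i}x_j^{a_j}\Bigr)\Bigl(\prod_{k\in I}x_{k-1}^{a_k+n_k}\Bigr)\Bigl(\prod_{i<j\leq s,\,j\notin I} x_{j-1}^{a_j}\Bigr).$$

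The heart of the argument is then the identity
$$\omega_\ell(m) - \omega_\ell(x) = \sum_{k\in I}\bigl[\alpha_{\ell-1}(a_k+n_k) - \alpha_{\ell-1}(a_k) - \alpha_{\ell-1}(n_k)\bigr],$$
where the contribution of $x_i^{a_i}$ to $\omega_\ell(x)$ has been absorbed using $\alpha_{\ell-1}(a_i) = \sum_{k\in I}\alpha_{\ell-1}(n_k)$, which follows from the disjointness of the binary supports of the $n_k$. If for every $k\in I$ the binary supports of $a_k$ and $n_k$ are disjoint, then no carries occur in any $a_k+n_k$, every bracket vanishes, and $\omega(m) = \omega(x)$. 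Otherwise I let $\ell^{\ast}$ be the smallest index at which some $k\in I$ has $\alpha_{\ell^{\ast}-1}(a_k) = \alpha_{\ell^{\ast}-1}(n_k) = 1$. For $\ell<\ell^{\ast}$ no carries have been generated yet, so every term of the bracket vanishes and $\omega_\ell(m) = \omega_\ell(x)$; at $\ell = \ell^{\ast}$ each colliding $k$ contributes $-2$ (the bit becomes $0$ with no incoming carry) while the non-colliding indices contribute $0$, giving $\omega_{\ell^{\ast}}(m) < \omega_{\ell^{\ast}}(x)$. Hence $\omega(m) < \omega(x)$ lexicographically, as required.

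The computation is entirely elementary; the only delicate point is the careful bookkeeping of binary carries, specifically the observation that carries propagate strictly upward in bit index, so that the first collision position $\ell^{\ast}-1$ is indeed the first place where $\omega(m)$ can diverge from $\omega(x)$. Summing over all monomial summands $m$ of $p_{(i;I)}(x)$ then yields $p_{(i;I)}(x)\in P_{s-1}(\omega(x))$.
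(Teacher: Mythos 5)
The paper does not actually prove this lemma here: it is stated with a citation to \cite{P.S}, so there is no internal proof to compare against. Your argument is a correct and complete self-contained verification, and it follows the natural route: expand $\bigl(\sum_{k\in I}x_{k-1}\bigr)^{a_i}$ by Frobenius into terms $\prod_{k\in I}x_{k-1}^{n_k}$ with the $n_k$ having disjoint binary supports partitioning that of $a_i$, reduce the comparison of $\omega_\ell(m)$ with $\omega_\ell(x)$ to the carry defect $\alpha_{\ell-1}(a_k+n_k)-\alpha_{\ell-1}(a_k)-\alpha_{\ell-1}(n_k)$, and observe that at the first bit position where some $a_k$ and $n_k$ collide the total weight drops (by $2$ per colliding $k$), which settles the left-lexicographic comparison regardless of what the propagating carries do at higher positions. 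The only delicate point is exactly the one you flag --- that no carry can reach a position below the first collision --- and your induction on bit positions handles it correctly; the degenerate cases ($I=\emptyset$, or all supports disjoint so that $\omega(m)=\omega(x)$) are also covered.
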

The lemma indicates that if $\omega$ is a weight vector and $x\in P_s(\omega),$ then $p_{(i; I)}(x)\in P_{s-1}(\omega).$ Furthermore, $p_{(i; I)}$ passes to a homomorphism from $QP_s(\omega)$ to $QP_{s-1}(\omega).$

In Section \ref{s3}, we use Lemma \ref{bdPS} and the results in Sum \cite{N.S4} to prove a certain subset of  $QP_s$ is linearly independent. More precisely,  let $\mathscr B$ be a finite subset of $P_s$ consisting of some monomials of degree $d.$ Denote by $|\mathscr B|$ the cardinal of $\mathscr B.$ To prove the set $[\mathscr B]$ is linearly independent in $(QP_s)_d,$ we denote the elements of $\mathscr B$ by $b_{d, \, k},\ 1\leqslant k \leqslant n = |\mathscr B|$ and assume that there is a linear relation \mbox{$ \mathcal S = \sum_{k = 1}^n\gamma_kb_{d,\, k}\equiv 0,$} with $\gamma_k\in \mathbb F_2$ for all $k,\, 1\leqslant k\leqslant n.$ For $(i; I)\in \mathcal N_s,$ we explicitly compute $p_{(i; I)}(\mathcal S)$ in terms of the admissible monomials in $P_{s-1}\, ({\rm mod}(\mathscr A^+\cdot P_{s-1})).$ Computing from some relations $p_{(i; I)}(\mathcal S)\equiv 0$ with $(i; I)\in \mathcal N_s,$ we obtain $\gamma_k = 0$ for all $k.$

\section{\boldmath\mbox{$\mathscr A$}-generators for \boldmath\mbox{$P_5$} in degree \boldmath\mbox{$3(2^t-1) + 2^t$}}\label{s3}

The goal of this section is to prove Theorem \ref{dlc-1}.  More precisely, we explicitly determine all admissible monomials of degree $3(2^t-1) + 2^t$ in $P_5.$ We first recall a result of Singer \cite{W.S2} on the hit monomials in $P_s.$ 

\subsection{Singer's criterion on the hit monomials}

\begin{dn}\label{spi}
A monomial $z = x_1^{b_1}x_2^{b_2}\ldots x_s^{b_s}$ in $P_s$ is called a spike if $b_i = 2^{t_i} - 1$ for $t_i$ a non-negative integer and $i = 1, 2, \ldots, s.$ If $z$ is a spike with $t_1 > t_2 > \ldots > t_{r-1}\geqslant t_r > 0$ and $t_j = 0$ for $j  > r,$ then it is called a minimal spike.
\end{dn}

\begin{bd} All the spikes in $P_s$ are admissible and their weight vectors are weakly decreasing. Furthermore, if a weight vector $\omega = (\omega_1, \omega_2, \ldots)$ is weakly decreasing and $\omega_1\leqslant s,$ then there is a spike $\mathscr Z $ in $P_s$ such that $\omega(\mathscr Z ) = \omega.$
\end{bd}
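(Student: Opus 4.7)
The plan is to verify the two claims of the lemma in turn. For the first, let $z = x_1^{2^{t_1}-1}x_2^{2^{t_2}-1}\cdots x_s^{2^{t_s}-1}$ be a spike. Because the binary expansion of $2^{t_i}-1$ consists of $t_i$ consecutive ones followed by zeros, $\alpha_{j-1}(2^{t_i}-1) = 1$ if and only if $j \leq t_i$, so
\[
\omega_j(z) \;=\; \sum_{i=1}^{s}\alpha_{j-1}(2^{t_i}-1) \;=\; |\{\, i : 1 \leq i \leq s,\ t_i \geq j\,\}|,
\]
which is manifestly non-increasing in $j$. Hence $\omega(z)$ is weakly decreasing. (In partition language, $\omega(z)$ is simply the conjugate of the partition obtained by sorting $t_1,\ldots,t_s$ in decreasing order.)

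For the admissibility of $z$, I would rely on the classical non-hit property: $z \notin \mathscr A^{+}\cdot P_s$, proved most transparently by pairing $z$ with a suitable element of the Milnor basis (the dual primitives $Q_{t_1-1}, Q_{t_2-1}, \ldots$ detect $z$ non-trivially). To upgrade non-hitness to admissibility, I argue by contradiction: a relation $z \equiv \sum_k y_k$ with each $y_k < z$ in the $(\omega,\sigma)$-order exhibits $z$ as the maximum monomial of the hit polynomial $z + \sum_k y_k \in \mathscr A^{+}\cdot P_s$. I would then verify, via a direct Cartan-formula inspection of the monomials that can appear as leading terms of $Sq^{i}(f)$ with $i > 0$, that no spike can occur as the leading term of such a hit polynomial — the pure-binary form $b_i = 2^{t_i}-1$ forces the relevant binomial coefficients to vanish modulo $2$.

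For the converse, given a weakly decreasing weight vector $\omega = (\omega_1, \omega_2, \ldots)$ with $\omega_1 \leq s$ and $\omega_j = 0$ for $j$ beyond some bound, define
\[
t_i \;=\; \max\{\, j \geq 1 : \omega_j \geq i \,\} \qquad (1 \leq i \leq s),
\]
with the convention that the maximum over the empty set is $0$. Because $\omega$ is weakly decreasing, the equivalence $t_i \geq j \iff \omega_j \geq i$ holds for every $j \geq 1$, and using $\omega_j \leq \omega_1 \leq s$ we obtain
\[
|\{\, i : 1 \leq i \leq s,\ t_i \geq j \,\}| \;=\; |\{\, i : 1 \leq i \leq \omega_j \,\}| \;=\; \omega_j.
\]
By the first part, the spike $\mathscr Z = x_1^{2^{t_1}-1}\cdots x_s^{2^{t_s}-1}$ then satisfies $\omega(\mathscr Z) = \omega$, as desired.

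The main obstacle is the admissibility half of the first assertion. The non-hit property itself is classical, but promoting it to the exclusion of relations $z \equiv \sum y_k$ with $y_k$ in the same weight class but smaller exponent vector requires a careful binomial-coefficient bookkeeping exploiting the spike form of the exponents. The decreasing-weight computation and the explicit construction of $\mathscr Z$ in the converse are, by contrast, routine manipulations with binary expansions.
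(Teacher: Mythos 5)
Your proposal is correct and follows essentially the same route as the paper: the admissibility of a spike is reduced, via the Cartan formula, to the vanishing mod $2$ of $\binom{a_t}{2^{d_t}-1-a_t}$ for $a_t < 2^{d_t}-1$ (the paper carries out exactly the binomial-coefficient bookkeeping you defer, using that $a_t$ and $2^{d_t}-1-a_t$ have disjoint binary digits because $\binom{2^{d_t}-1}{a_t}$ is always odd), and your converse is the same conjugate-partition construction the paper uses. The Milnor-basis detection of non-hitness you invoke as a first step is redundant, since the binomial argument already shows a spike cannot occur as a term of any $Sq^{n}(u)$ with $n>0$.
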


\begin{proof}
Let $\mathscr Z = \prod\limits_{1\leq i\leq s}x_i^{2^{d_i}-1}$ be a spike monomial in $P_s.$ By way of contradiction, suppose that $\mathscr Z$ is inadmissible. Then, there exist monomials $y_1, y_2, \ldots, y_m\in P_s$ such that $y_t < \mathscr Z$ for all $t$ and 
$ \mathscr Z = \sum_{1\leq t\leq m}y_t + \sum\limits_{j > 0}Sq^j(w_j),$
with suitable monomial $w_j\in P_s.$ This implies that there is a monomial $u\in P_s$ with $\deg u < \deg \mathscr Z$ such that $Sq^n(u) = \mathscr Z + f,$ where $n > 0$ and $f = \sum_{i}X_i\in P_s,\ \deg X_i = \deg \mathscr Z.$ Suppose $u = \prod\limits_{1\leqslant \ell\leqslant k}x_{\ell}^{a_{\ell}}.$ Since $\deg u <\deg \mathscr Z,$ ther is a number $t,\, 1\leq t\leq s$ such that $a_t < 2^{d_t} - 1.$ According to Cartan's formula, we have
$$ \mathscr Z + f = Sq^n(u) = \sum\limits_{i\geq 0}Sq^i(x_t^{a_t})Sq^{n-i}(x_1^{a_1}\ldots x_{t-1}^{a_{t-1}}x_{t+1}^{a_{t+1}}\ldots x_s^{a_s}).$$
Since $2^{d_t} - 1-a_t >0$ and $\mathscr Z$ is a term of $\sum\limits_{i\geqslant 0}Sq^i(x_t^{a_t})Sq^{n-i}(x_1^{a_1}\ldots x_{t-1}^{a_{t-1}}x_{t+1}^{a_{t+1}}\ldots x_s^{a_s}),$ hence $\mathscr Z$ is a term of $Sq^{2^{d_t} - 1-a_t }(x_t^{a_t})Sq^{n-(2^{d_t} - 1-a_t )}(x_1^{a_1}\ldots x_{t-1}^{a_{t-1}}x_{t+1}^{a_{t+1}}\ldots x_s^{a_s}).$ On the other hand,  $2^{d_t} - 1-a_t \leq 2^{d_t} - 1,$ so there exist a number $\ell\geq 0$ such that $\alpha_{\ell}(2^{d_t} - 1-a_t) = 1.$ This implies $\alpha_{\ell}(a_t) = 0.$ Since $\binom{a_t}{2^{d_t} - 1-a_t}$ is even, $Sq^{2^{d_t} - 1-a_t}(x_t^{a_t}) = 0.$ So, one gets $\mathscr Z = 0.$ This contradicts the fact that $\mathscr Z = \prod\limits_{1\leq i\leq s}x_i^{2^{d_i}-1}\neq 0.$  Obviously, if $\mathscr Z $ is a spike, then its weight vector is weakly decreasing. 

Now, assume that $\omega = (\omega_1, \omega_2, \ldots)$ is weakly decreasing and $\omega_1\leq s.$ For $ j\geq s$, we denote by $a_j$ the non-negative integer such that
$$ \alpha_i(a_j) = \begin{cases}1, &\text{if } \omega_{i+1}\geqslant j,\\ 0, &\text{if } \omega_{i+1}< j. \end{cases}$$
Denote by $d_j$ the greatest integer such that $\omega_{d_j}\geqslant j$. Since $\omega$ is weakly decreasing, $\alpha_i(a_j) = 1$ for $1 \leq i < d_j$ and $\alpha_i(a_j) = 0$ with $i \geq d_j$. So,  $a_j = 2^{d_j}-1$. 
On the other hand, we have $\omega_{i+1} \leq  \omega_1 \leq s$ for all $i \geq 0.$ Hence, if $j > s$, then $a_j = 0$. Therefore, $\mathscr Z = x_1^{a_1}x_2^{a_2}\ldots x_s^{a_s}$ is a spike in $P_s$ and $\omega(\mathscr Z) = \omega$. The lemma is proved.
\end{proof}

In \cite{W.S2}, Singer pointed out that if $\mu(d)\leqslant s,$ then there exists uniquely a minimal spike of degree $d$ in $P_s.$ Further,  we have the following. 
\begin{theo}[see Singer~\cite{W.S2}]\label{dlsig}
Let $x$ be a monomial in $P_s,$ where $\mu(\deg(x))\leqslant s.$ Suppose that $z$ is the minimal spike with $\deg(z) = \deg(x).$ Then,  if $\omega(x) < \omega(z),$ then $x$ belongs to $\mathscr A^{+}\cdot P_s.$
\end{theo}

\subsection{Proof of Theorem \ref{dlc-1}}\label{cmdlc-1}

For convenience, we use the following notations:
\begin{eqnarray}
\mathbb{N}_s &=& \{1,2,\ldots, s\},\nonumber\\
X_{\mathbb I} &=& X_{\{i_1,i_2,\ldots,i_r\}} = \prod\limits_{i\in\mathbb{N}_s\setminus \mathbb I}x_i,\,\, \mathbb I = \{i_1,i_2,\ldots,i_r\}\subseteq  \mathbb{N}_s .\nonumber
\end{eqnarray}
Specifically, $X_{\mathbb{N}_s} = 1,\  X_{\emptyset} = x_1x_2\ldots x_s,$ and $X_{\{i\}} = x_1\ldots \hat{x}_i\ldots x_s,\,\,i = 1,2,\ldots, s.$ Let $x = x_1^{a_1}x_2^{a_2}\ldots x_s^{a_s}\in P_s$ and let $ \mathbb I_j(x) = \{i\in \mathbb{N}_{s}:\; \alpha_j(a_i) = 0\},$ for $j\geqslant 0.$ Then, a careful but straightforward computation shows that $x = \prod_{j\geq 0}X_{\mathbb I_{j}(x)}^{2^{j}}.$ Now, notice that Kameko's homomorphism $\widetilde {Sq^0_*} = (\widetilde {Sq^0_*})_{(5, 3(2^t-1) + 2^t)}: (QP_5)_{3(2^t-1) + 2^t} \longrightarrow (QP_5)_{2^{t+1} - 4}$ is an epimorphism, and so, $ (QP_5)_{3(2^t-1) + 2^t} \cong {\rm Ker}((\widetilde {Sq^0_*})_{(5, 3(2^t-1) + 2^t)})\,\bigoplus\,  (QP_5)_{2^{t+1} - 4}.$ Following our previous works \cite{P.S, P.S2}, the dimensions of $(QP_5)_{2^{t+1} - 4}$ are determined as follows. 

\begin{therm}\label{dldb}
Let $n = 2^{t+1}-4$ with $t$ a positive integer. The dimension of the $\mathbb F_2$-vector space $(QP_5)_{n}$ is determined by the following table:

\centerline{\begin{tabular}{c|cccccc}
$n = 2^{t+1}-4$&$t=1$ & $t=2$ & $t=3$ & $t=4$ & $t  =5$ & $t\geqslant 6$\cr
\hline
 $\dim(QP_5)_n$ & $1$ & $45$ & $190$ &$480$ &$650$ & $651$ \cr
\end{tabular}}
\end{therm}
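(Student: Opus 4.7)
The plan is to determine $(QP_5)_n$ for $n = 2^{t+1}-4$ by explicitly describing the set $\mathscr B_5(n)$ of admissible monomials and counting it. The case $t=1$ gives $n=0$ and $(QP_5)_0 = \mathbb F_2$, so $\dim = 1$ immediately. For $t \geq 2$ I would use the direct sum decomposition $(QP_5)_n = (QP_5^0)_n \oplus (QP_5^+)_n$ and handle each summand separately.

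For $(QP_5^0)_n$, the map $\Phi^0$ carries a minimal generating set of $P_4$ in degree $n$ to one for $P_5^0$. Since $\dim(QP_4)_n$ at degree $n = 2^{t+1}-4$ is known from Sum \cite{N.S4}, a straightforward inclusion-exclusion over the five embeddings $\rho_i : P_4 \hookrightarrow P_5$ yields $|\mathscr B_5^0(n)|$. For $(QP_5^+)_n$ I would employ the weight-vector filtration,
$$ \dim (QP_5^+)_n \;=\; \sum_{\omega} \dim QP_5^+(\omega),$$
where $\omega$ ranges over weight vectors of degree $n$ with $\omega_1 \leq 5$. By Singer's criterion (Theorem \ref{dlsig}), only $\omega \geq \omega(z_t)$ contribute, where $z_t$ denotes the unique minimal spike in $P_5$ of degree $n$; for $t \geq 3$ one has $z_t = x_1^{2^t-1}x_2^{2^{t-1}-1}x_3^{2^{t-2}-1}x_4^{2^{t-2}-1}$ arising from the decomposition $n = (2^t-1) + (2^{t-1}-1) + 2(2^{t-2}-1)$, and the admissible $\omega$ can be enumerated by inspection under the constraint $\omega_i \leq 5$.

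For each such $\omega$ I would carry out a two-step case analysis. First, starting from all monomials in $(P_5^+)_n$ of weight $\omega$, iteratively eliminate strictly inadmissible monomials, using Theorem \ref{dlKS} to propagate strict inadmissibility through products and thereby shrink to a putative admissible set $\mathscr B_5^+(\omega)$. Second, to certify linear independence in $QP_5^+(\omega)$, suppose $\sum_k \gamma_k b_k \equiv 0$; applying each homomorphism $p_{(i;I)} : P_5 \to P_4$ for $(i;I) \in \mathcal N_5$ and invoking Lemma \ref{bdPS} so that $p_{(i;I)}$ descends to $QP_4(\omega)$, one expands each $p_{(i;I)}(b_k)$ in the known admissible basis of $QP_4$, extracts coefficients of admissible monomials, and solves the resulting linear system over $\mathbb F_2$ to force $\gamma_k = 0$ for all $k$.

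The main obstacle is the sheer bulk of the case analysis, particularly for the intermediate values $t = 3, 4, 5$, where many weight vectors appear and each carries a large admissible set; each weight vector requires its own hit-reduction and independence calculation. A delicate secondary point is the stabilization of the dimension at $651$ for $t \geq 6$: one must show that for such $t$ the admissible monomials split into shape families (indexed by binary patterns) whose cardinalities no longer depend on $t$. The transition from $t=5$ (dimension $650$) to $t=6$ (dimension $651$) corresponds to exactly one new admissible monomial becoming realizable once $t$ is large enough to accommodate its exponent pattern; pinpointing this monomial and verifying that no further growth can occur is the most delicate combinatorial check in the entire argument.
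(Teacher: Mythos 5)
The first thing to note is that the paper does not prove Theorem \ref{dldb} at all: it is imported verbatim from \cite{P.S, P.S2}, and the surrounding text only says that $(QP_5)_{2^{t+1}-4}$ ``has been explicitly computed'' there. So there is no in-paper proof to measure your proposal against; what you have written is a plan for re-deriving the cited result. That plan is the standard one in this subject, and it coincides with the strategy this very paper uses for the degree $3(2^t-1)+2^t$ in Subsection \ref{cmdlc-1}: split $(QP_5)_n$ as $(QP_5^0)_n\oplus(QP_5^+)_n$, get the $P_5^0$ part from $\Phi^0(\mathscr B_4(n))$ using Sum's four-variable results \cite{N.S4}, filter the $P_5^+$ part by weight vectors, discard everything below the weight of the minimal spike by Theorem \ref{dlsig}, kill strictly inadmissible monomials via Theorem \ref{dlKS}, and certify independence with the maps $p_{(i;I)}$ and Lemma \ref{bdPS}. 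Your identification of the minimal spike $x_1^{2^t-1}x_2^{2^{t-1}-1}x_3^{2^{t-2}-1}x_4^{2^{t-2}-1}$ for $t\geqslant 3$ is correct, and your parity remark implicitly pins $\omega_1=4$. One structural difference from the paper's treatment of its own degree is that here Kameko's homomorphism is not available as an organizing tool (there is no analogous epimorphism splitting off a smaller known degree for $n=2^{t+1}-4$ in the way $(QP_5)_{3(2^t-1)+2^t}$ surjects onto $(QP_5)_{2^{t+1}-4}$), so the induction on $t$ that stabilizes the answer for $t\geqslant 6$ must be run directly on the weight-vector strata; you flag this stabilization as the delicate point, which is fair. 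The honest caveat is that your proposal defers all of the actual content --- the enumeration of the contributing weight vectors, the explicit lists of strictly inadmissible monomials, and the linear-independence computations --- to ``a large case analysis,'' which is exactly where the several hundred pages of work in \cite{P.S, P.S2} live; as written it is a correct road map rather than a proof.
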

When $t = 1$, we see that $2^{t+1}-4  =0 $ and $(QP_5)_{0}\cong \mathbb F_2.$ So, $\dim((QP_5)_{0}) = \dim(\mathbb F_2) = 1.$ We now determine ${\rm Ker}((\widetilde {Sq^0_*})_{(5, 3(2^t-1) + 2^t)}).$ Put
$$ \omega_{(5, t)} =  (3,3, \ldots, 3, 1),\, \mbox{and }\, \omega^*_{(5, t)} =  (3,3, \ldots, 3)\ \mbox{($t$ times of $3$)}.$$
Then, an important key in the proof of the theorem is the following lemma.

\begin{lema}\label{bdvtt}
If $x$ is an admissible monomial of degree $3(2^t-1) + 2^t$ in $P_5$ and $[x]$ belongs to  ${\rm Ker}((\widetilde {Sq^0_*})_{(5, 3(2^t-1) + 2^t)})$ then $\omega(x) =\omega_{(5, t)}.$
\end{lema}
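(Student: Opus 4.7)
The plan is to pin down $\omega(x)$ entry-by-entry by combining three ingredients: Singer's minimal-spike criterion, the parity of $d$, and the kernel hypothesis on $\widetilde{Sq^0_*}$.

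First, I identify the minimal spike. Writing $d = 3(2^t-1)+2^t = (2^{t+1}-1)+(2^t-1)+(2^t-1)$ (and noting no decomposition into two Mersenne numbers exists since $d+2$ is odd), we have $\mu(d)=3$, so the minimal spike of degree $d$ in $P_5$ is $z = x_1^{2^{t+1}-1}x_2^{2^t-1}x_3^{2^t-1}$. A direct calculation of the bit patterns of its exponents yields $\omega(z) = \omega_{(5,t)}$. By Theorem \ref{dlsig}, the admissibility of $x$ then forces $\omega(x) \geqslant \omega_{(5,t)}$ in the left-lex order; in particular $\omega_1(x)\geqslant 3$.

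Next, I show $\omega_1(x)=3$. The identity $\sum_j 2^{j-1}\omega_j(x)=d$ and the oddness of $d=2^{t+2}-3$ force $\omega_1(x)$ odd, and since $\omega_1(x)\leqslant 5$, only $\omega_1(x)\in\{3,5\}$ remain. If $\omega_1(x)=5$, every variable of $x$ has odd exponent so $\psi(x)\neq 0$ in $P_5$; since $\psi$ sends admissible all-odd monomials to admissible monomials (a standard consequence of $\psi\,Sq^{2i}=Sq^i\,\psi$ together with the surjectivity of Kameko's map onto an admissible basis), $[\psi(x)]$ is nonzero in $(QP_5)_{2^{t+1}-4}$, i.e.\ $\widetilde{Sq^0_*}([x])\neq 0$, contradicting $[x]\in\ker(\widetilde{Sq^0_*})$. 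Hence $\omega_1(x)=3$.

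Finally, I iterate the analysis on the remaining entries. With $\omega_1(x)=3$, the shifted lex bound reads $(\omega_2(x),\omega_3(x),\ldots)\geqslant (3,\ldots,3,1)$ (with $t-1$ threes) and the residual degree identity becomes $\sum_{j\geqslant 1}2^{j-1}\omega_{j+1}(x) = 2^{t+1}-3$. Parity at each bit level then forces $\omega_j(x)$ odd for $j=2,\ldots,t$, hence $\omega_j(x)\in\{3,5\}$. The case $\omega_j(x)=5$ is eliminated via a strict-inadmissibility argument in the sense of Definition \ref{dniad}: for the base level $j=2$, the identity $Sq^1(x_{i_1}^3 x_{i_2}^3 x_{i_3}^3 x_{i_4}^2 x_{i_5})=x_{i_1}^4 x_{i_2}^3 x_{i_3}^3 x_{i_4}^2 x_{i_5}+x_{i_1}^3 x_{i_2}^4 x_{i_3}^3 x_{i_4}^2 x_{i_5}+x_{i_1}^3 x_{i_2}^3 x_{i_3}^4 x_{i_4}^2 x_{i_5}+x_{i_1}^3 x_{i_2}^3 x_{i_3}^3 x_{i_4}^2 x_{i_5}^2$ shows that any candidate monomial of weight $(3,5)$ is a sum, modulo hits, of monomials of weight $(3,3,1)$, which are strictly lex-smaller; for higher $j$, Theorem \ref{dlKS}(ii) lifts this strict inadmissibility from the tail configuration to the full monomial. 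Combined, these force $\omega_j(x)=3$ for every $j\leqslant t$, and the residual degree identity then leaves only $\omega_{t+1}(x)=1$ and $\omega_j(x)=0$ for $j>t+1$, giving $\omega(x)=\omega_{(5,t)}$.

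The main obstacle is the inductive elimination of $\omega_j(x)=5$ for $j\geqslant 2$: at each bit level one must select a pivot variable whose bit-$(j-1)$ descent, together with a suitable single-$Sq$ reduction, produces only error terms of strictly smaller lex weight, and then invoke Theorem \ref{dlKS}(ii) to transfer the strict inadmissibility of that local block to the entire monomial $x$. The parity and lex bounds already reduce the possibilities to a small finite set at each level, so the genuine work lies in this per-level inadmissibility reduction.
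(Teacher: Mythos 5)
Your argument is correct, and it diverges from the paper's proof in a meaningful way at the higher bit levels. The paper proves the lemma by induction on $t$: after ruling out $\omega_1(x)=5$ exactly as you do (via Singer's criterion, parity, and the Kameko kernel hypothesis), it writes $x = X_{\{i,j\}}u^2$ with $u$ admissible of degree $3(2^{t-1}-1)+2^{t-1}$ and invokes the inductive hypothesis for $u$. You instead use the kernel hypothesis only once, at the top level, and eliminate $\omega_j(x)=5$ for $j\geqslant 2$ by an explicit strict-inadmissibility computation: your identity $Sq^1(x_{i_1}^3x_{i_2}^3x_{i_3}^3x_{i_4}^2x_{i_5}) = x_{i_1}^4x_{i_2}^3x_{i_3}^3x_{i_4}^2x_{i_5}+\cdots+x_{i_1}^3x_{i_2}^3x_{i_3}^3x_{i_4}^2x_{i_5}^2$ is correct (the other three terms have weight $(3,3,1)<(3,5)$), and every degree-$13$ monomial of weight $(3,5)$ is of that form, so such blocks are strictly inadmissible. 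This actually buys you something the paper's induction glosses over: to apply the inductive hypothesis one needs $[u]$ to lie in the kernel of Kameko's map at the lower degree, i.e.\ one must exclude $\omega_1(u)=5$, which is exactly the weight-$(3,5,\ldots)$ configuration your computation kills; so your route is, if anything, the more self-contained one. The one imprecision is your attribution of the lift at levels $j\geqslant 3$ to Theorem \ref{dlKS}(ii) alone: part (ii) only propagates strict inadmissibility from a \emph{low-bit} block $u$ to $uy^{2^\tau}$, whereas a weight-$5$ entry at position $j\geqslant 3$ sits in the high bits. You need to peel off the low bits first — write $x = X_{\{i,j\}}u^2$ and iterate, using part (ii) once at the bottom to make the $(3,5)$-block strictly inadmissible and part (i) repeatedly to conclude that $X_{\{i,j\}}u^2$ is inadmissible whenever $u$ is — which is a short induction identical in shape to the paper's. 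With that bookkeeping made explicit, the proof is complete.
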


\begin{proof}
We prove the lemma by induction on $t.$ By Sum \cite{N.S8, N.S6}, we see that the lemma holds for $t = 1.$ 

Suppose $t > 1$ and the lemma holds for $1,2,\ldots, t-1$.  Observe that the monomial $z = x_1^{2^{t+1}-1}x_2^{2^t-1}x_3^{2^t-1}$ is the minimal spike of degree $3(2^t-1) + 2^t$ in $P_5$ and  $\omega(z) =  \omega_{(5,t)}.$

 Since $3(2^t-1) + 2^t$ is odd and $[x]\neq [0],$ one gets either $\omega_1(x) = 3$ or $\omega_1(x) = 5.$ If $\omega_1(x) = 5,$ then $x = X_{\emptyset}y^2$ with $y$ a monomial of degree $2^{t+1} - 4$ in $P_5.$ Since $x$ is admissible, by Theorem \ref{dlKS}, $y$ is admissible. Hence, $(\widetilde {Sq}_*^0)_{(5, 3(2^t-1) + 2^t)}([x]) = [y]\neq [0].$ This contradicts the face that $[x]\in {\rm Ker}((\widetilde {Sq_*^0})_{(5, 3(2^t-1) + 2^t)}),$ so $\omega_1(x) = 3.$ Then, we have $x = X_{\{i, j\}}u^2$ with $1 \leqslant i < j\leqslant 5$ and $u$ an admissible monomial of degree $3(2^{t-1}-1) + 2^{t-1}$ in $P_5.$ Now, the lemma follows from the inductive hypothesis.
\end{proof}

Following Lemma \ref{bdvtt}, it may be concluded that
$$ {\rm Ker}((\widetilde {Sq^0_*})_{(5, 3(2^t-1) + 2^t)})  = (QP_5^0)_{3(2^t-1) + 2^t}\, \bigoplus\, \mathbb V,$$
where $\mathbb V = ({\rm Ker}((\widetilde {Sq^0_*})_{(5,3(2^t-1) + 2^t)})\cap (QP_5^+)_{3(2^t-1) + 2^t})$ is isomorphic to $QP_5^+(\omega_{(5, t)}).$
\medskip

{\bf The case \boldmath\mbox{$t = 1$}.} Following \cite{N.S8, N.S6}, we have $|\mathscr B_5^0(5)| = |\mathscr B_5^0(\omega_{(5, 1)})| = 45$ and $|\mathscr B_5^+(\omega_{(5, 1)})| = 0.$ This together with Theorem \ref{dldb} imply that $(QP_5)_5$ is $46$-dimensional.
\medskip

{\bf The case \boldmath\mbox{$t = 2$}.} It should be noted that for each $s\geqslant 1,$ Moetele and Mothebe indicated in \cite{M.M2} that $\dim (QP_s)_{13} = \sum\limits_{3\leqslant j\leqslant 13}\binom{d}{j}C_j,$ where $C_j,\ 3\leqslant j\leqslant 13,$ are determined by the following table:

\centerline{\begin{tabular}{c|ccccccccccccc}
$j$  &$3$ & $4$ & $5$ & $6$ & $7$ &$8$ & $9$ & $10$ & $11$ & $12$ & $13$   \cr
\hline
\ $C_j$ &$3$ & $23$ & $105$ & $268$ & $415$ &$438$ & $322$ & $164$ & $55$ & $11$ & $1$  \cr
\end{tabular}}

\medskip

This implies that $(QP_5)_{13}$ has dimension $250.$ Their methods are based on Kameko's homomorphism \cite{M.K} and a result on the admissible monomials {\cite[Theorem 2.7]{M.M2}}. For this case, we will prove it in another way. More precisely, to compute $(QP_5)_{13},$ we determine explicitly all the admissible monomials of degree $13$ in $P_5$ by using Lemma \ref{bdPS} and some homomorphisms in Subsection \ref{s2.2}. Detailed calculations are performed as follows. By Sum \cite{N.S4}, we have $|\mathscr B_4(13)| = |\mathscr B_4(\omega_{(5, 2)})| = 35.$ A simple computation shows that $ \mathscr B_5^0(13) = \mathscr B_5^0(\omega_{(5, 2)}) = \Phi^0(\mathscr B_4(13))$ is the set consisting of $145$ admissible monomials, namely $q_{2, k},$ for $1\leqslant k\leqslant 145$ (see Subsection \ref{dtcndtq-1}), i.e., $(QP_5^0)_{13}$ is $145$-dimensional. Next, we need to calculate $QP_5^+(\omega_{(5, 2)}).$ It is not difficult to show that $\Phi^+(\mathscr B_4(\omega_{(5, 2)})) \cup \{x_1x_2^2x_3^4x_4^3x_5^3,\ \ x_1^3x_2^4x_3^3x_4x_5^2, \ \ x_1^3x_2^4x_3x_4^3x_5^2, \ \ x_1^3x_2^4x_3x_4^2x_5^3\}$ is the set consisting of 60 admissible monomials, namely $b_{13,\, k},$ for $1\leqslant k\leqslant 60$ (see Subsection \ref{dtcndtq-1}). The following assertion is of key importance in the proof.

\begin{propo}\label{md13}
The set $\{[b_{13,\, k}]_{\omega_{(5,2)}},\ 1\leqslant k\leqslant 60\}$ is a basis of $\mathbb F_2$-vector space $QP_5^+(\omega_{(5, 2)}).$
\end{propo}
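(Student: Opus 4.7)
The plan is to prove Proposition~\ref{md13} in two stages: first that the $60$ classes span $QP_5^+(\omega_{(5,2)})$, and then that they are linearly independent.

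For the spanning step, observe that $QP_5^+(\omega_{(5,2)})$ is by definition the $\mathbb F_2$-span of $[\mathscr B_5^+(\omega_{(5,2)})]_{\omega_{(5,2)}}$, so it suffices to verify that every admissible monomial of degree $13$ in $P_5^+$ with $\omega$-vector equal to $\omega_{(5,2)}=(3,3,1)$ is one of the $b_{13,k}$. Since $\omega_1(x)=3$, any such $x$ factors as $x=X_{\{i,j\}}u^2$ with $1\leqslant i<j\leqslant 5$ and $u\in P_5$ of degree $5$, and $\omega(u)\leqslant(3,1)$. I would enumerate the candidates by running over the pairs $(i,j)$ and over all monomials $u$ whose squares contribute correctly to the weight vector; for each candidate not appearing in the list I would invoke Theorem~\ref{dlKS}(ii) together with the list of strictly inadmissible monomials of degrees up to $5$ from Sum~\cite{N.S4,N.S6} to show that it is strictly inadmissible, and hence the $60$ monomials indeed exhaust $\mathscr B_5^+(\omega_{(5,2)})$. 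The four extra monomials outside $\Phi^+(\mathscr B_4(\omega_{(5,2)}))$ require a direct admissibility check.

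For linear independence, I will use the $p_{(i;I)}$-technique outlined at the end of Section~\ref{s2}. Suppose
\[
\mathcal S \;=\; \sum_{k=1}^{60}\gamma_k\,b_{13,k}\;\equiv_{\omega_{(5,2)}}\;0,\qquad \gamma_k\in\mathbb F_2.
\]
Then by Lemma~\ref{bdPS}, for every $(i;I)\in\mathcal N_5$ the polynomial $p_{(i;I)}(\mathcal S)\in P_4(\omega_{(5,2)})$ represents zero in $QP_4(\omega_{(5,2)})$. Rewriting each $p_{(i;I)}(b_{13,k})$ as a linear combination of the admissible monomials of degree $13$ in $P_4$ (classified by Sum~\cite{N.S4}, who shows $|\mathscr B_4(\omega_{(5,2)})|=35$), and setting the coefficients of these admissible monomials equal to zero, yields a system of linear equations in $\gamma_1,\dots,\gamma_{60}$. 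Running through a carefully chosen family of $(i;I)$'s---for instance all $i\in\{1,\dots,5\}$ with $I=\emptyset$, together with pairs $(i;I)$ for $\ell(I)=1,2$---I expect to obtain enough equations to force $\gamma_k=0$ for every $k$.

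The main obstacle is clearly computational rather than conceptual: reducing each $p_{(i;I)}(b_{13,k})$ modulo $\mathscr A^+\cdot P_4+P_4^-(\omega_{(5,2)})$ to Sum's explicit basis of $QP_4(\omega_{(5,2)})$ involves substantial bookkeeping, since a single $b_{13,k}$ can produce several non-admissible terms after substitution, each of which must be straightened using Adem-type relations. The delicate part will be choosing a minimal but sufficient family of $(i;I)$'s so that the resulting matrix has rank $60$; in practice one exploits the block structure induced by the spike-support of the monomials $b_{13,k}$, separating the $\Phi^+$-images from the four exceptional monomials $x_1x_2^2x_3^4x_4^3x_5^3$, $x_1^3x_2^4x_3^3x_4x_5^2$, $x_1^3x_2^4x_3x_4^3x_5^2$, $x_1^3x_2^4x_3x_4^2x_5^3$, and isolating the coefficients of the latter first before back-substituting into the remaining equations.
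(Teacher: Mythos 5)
Your overall strategy coincides with the paper's: factor an admissible $x$ with $\omega(x)=\omega_{(5,2)}$ as $X_{\{i,j\}}u^2$ with $u\in\mathscr B_5(\omega_{(5,1)})$, eliminate the candidates not in the list via Theorem \ref{dlKS}, and then prove linear independence by pushing a putative relation $\mathcal S\equiv_{\omega_{(5,2)}}0$ down to $P_4$ with the maps $p_{(i;I)}$ and reading off coefficients against Sum's basis of $QP_4^+(\omega_{(5,2)})$ (the paper uses only $\ell(I)=1$; note that $p_{(i;\emptyset)}$ sends $x_i$ to $0$ and therefore annihilates every monomial of $P_5^+$, so including $I=\emptyset$ in your family buys you nothing).

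There is, however, a genuine gap in your spanning step. You propose to dispose of the spurious candidates $X_{\{i,j\}}y^2$ by invoking ``the list of strictly inadmissible monomials of degrees up to $5$ from Sum.'' That input is not sufficient: after restricting $y$ to $\mathscr B_5(\omega_{(5,1)})$ there remain many inadmissible products of degree $13$ whose inadmissibility cannot be witnessed by a low-degree or four-variable strictly inadmissible factor. The paper must prove two new lemmas for exactly this purpose: Lemma \ref{bd13-2}, part of which lifts four-variable data through $\rho_i$, and, crucially, Lemma \ref{bd13-3}, which exhibits $21$ genuinely five-variable monomials of degree $13$ (for example $x_1x_2^{2}x_3^{2}x_4^{3}x_5^{5}$) as strictly inadmissible by explicit Cartan-formula decompositions modulo $P_5^-(\omega_{(5,2)})$; these involve all five variables and are not of the form $\rho_i(\bar x)$ or $w z^{2^r}$ with $w$ taken from the four-variable literature. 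Without establishing such a stock of degree-$13$ strictly inadmissible monomials in $P_5$ your elimination argument does not close, and the claim $\mathscr B_5^+(\omega_{(5,2)})\subseteq\{b_{13,k}\}$ is unproved. The linear-independence half of your plan is sound and matches the paper's execution.
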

 
We need some lemmas for the proof of the proposition.  It is rather straightforward to prove the following directly, so we omit the details by leaving
them to the interested reader for a direct check.

\begin{lema}\label{bd13-2}
The following monomials are strictly inadmissible:
\begin{enumerate}
\item[(i)]  $x_i^2x_j^2x_kx_{\ell}x_m^3,\ x_i^2x_jx_k^2x_{\ell}x_m^3,\ x_i^2x_jx_kx_{\ell}^2x_m^3,\ i < j <k < \ell.$ Here $(i, j, k, \ell, m)$ is a permutation of $(1,\, 2,\, 3,\, 4,\, 5).$  

\item[(ii)] $\rho_i(u),\ 1\leqslant i\leqslant 5,$  where $u$ is one of the following monomials:
$$ x_1^2x_2x_3^3x_4^3,\  x_1^2x_2^3x_3x_4^3,\ x_1^2x_2^3x_3^3x_4, \ x_1^3x_2^2x_3x_4^3,\ x_1^3x_2^2x_3^3x_4,\ x_1^3x_2^3x_3^2x_4,\ x_1^3x_2^4x_3^3x_4^3.$$
\end{enumerate}
\end{lema}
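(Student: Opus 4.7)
The plan is to verify each strict-inadmissibility claim by exhibiting, for each monomial $x$ in the list, an explicit identity of the form
\[x \;=\; \sum_{t} y_t \;+\; \sum_{1\leqslant j\leqslant 2^r-1} Sq^{j}(h_j), \qquad y_t < x,\]
where $r = \max\{i : \omega_i(x) > 0\}$ and the $h_j$ are polynomials of degree $\deg(x)-j$. The existence of such an identity is exactly the definition of strict inadmissibility (Definition \ref{dniad}).

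For part (i), every monomial listed has weight vector $\omega=(3,3)$, hence $r=2$ and only $Sq^1, Sq^2, Sq^3$ may appear on the right. Because the ordering $<$ of Definition \ref{dnqhtt} depends only on the weight vector (which is permutation-invariant) and on the exponent vector (whose left-lex comparison is preserved by a fixed coordinate relabeling), it suffices to check one canonical representative from each of the three forms, say with $(i,j,k,\ell,m)=(1,2,3,4,5)$; the remaining cases then follow by permuting the indices throughout the identity. For the canonical representative I would propose a spreader polynomial $h$ of degree $9-j$, typically obtained from $x_1 x_2 x_3 x_4 x_5^2$ or a similar product by modifying one exponent, expand $Sq^j(h)$ using Cartan's formula together with $Sq^a(x_i^k)=\binom{k}{a}x_i^{k+a}$, and verify that $x$ appears with coefficient $1$ while all remaining terms are strictly smaller in $<$.

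For part (ii), the first step is to reduce to $P_4$. The algebra monomorphism $\rho_i\colon P_4\to P_5$ is $\mathscr A$-linear, preserves the weight vector, and commutes with the left-lex comparison on exponent vectors (inserting the same zero entry at position $i$ in two exponent vectors preserves left-lex order). Consequently, the image under $\rho_i$ of a strict-inadmissibility relation in $P_4$ is again a strict-inadmissibility relation in $P_5$, and it is enough to show that the seven listed monomials are strictly inadmissible in $P_4$. The first six have degree $9$ and $\omega=(3,3)$, so $r=2$ and the $Sq^1, Sq^2, Sq^3$ toolkit used in part (i) applies again; the last, $x_1^3 x_2^4 x_3^3 x_4^3$, has degree $13$ and $\omega=(3,3,1)$, so $r=3$ and the wider range $Sq^1,\ldots,Sq^7$ is available. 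Many of these identities are already implicit in Sum's determinations of $\mathscr B_4(9)$ and $\mathscr B_4(13)$ in \cite{N.S4}, where none of the seven monomials appears in the admissible basis.

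The main obstacle is combinatorial rather than conceptual: for each monomial one must guess the correct spreader $h_j$, after which the Cartan expansion and the term-by-term comparison under $<$ is mechanical but lengthy. This is why arguments of this flavour are usually compressed into a single displayed identity per monomial in the literature, and why the verification of a lemma such as this—despite stating only finitely many cases—still demands a careful case-by-case audit.
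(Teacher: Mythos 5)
Your overall framework is the right one and matches what the paper does: the paper offers no written argument beyond ``proved by a direct computation,'' and for part (ii) your reduction through $\rho_i$ to $P_4$ is exactly the device the paper uses for the analogous families in Lemmas \ref{bd29-2} and \ref{Bdtq-3}(i). Your justification of that reduction is sound: $\rho_i$ is an $\mathscr A$-monomorphism, preserves weight vectors, and inserting a zero exponent at a fixed position preserves the left-lexicographic comparison of exponent vectors, so a strict-inadmissibility relation in $P_4$ pushes forward to one in $P_5$ with the same $r$. Your identification of $r$ ($r=2$ for the degree-$9$ monomials with $\omega=(3,3)$, $r=3$ for $x_1^3x_2^4x_3^3x_4^3$ with $\omega=(3,3,1)$) is also correct.

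There are, however, two genuine problems. First, the symmetry reduction in part (i) is not justified: the order $<$ of Definition \ref{dnqhtt} breaks ties between monomials of equal weight vector by the \emph{left-lexicographic} order on exponent vectors, and that order is not invariant under permuting the variables (e.g.\ $x_1x_2^2 < x_1^2x_2$ but the images under the transposition compare the other way). The paper's own model computation for Lemma \ref{bd13-3} produces lower terms with the \emph{same} weight vector as $x$, whose smallness is certified only by the exponent-vector comparison; such a certificate does not survive relabeling. So you must either verify a separate identity for each of the $15$ monomials in (i) (one for each choice of $m$ in each of the three patterns), or arrange identities in which every non-$Sq$ term has strictly smaller weight vector, which would then be permutation-stable. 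Second, and more seriously, no identity is actually produced anywhere: ``one must guess the correct spreader'' defers the entire content of the lemma. Note also that the absence of these monomials from Sum's admissible bases for $P_4$ yields only \emph{inadmissibility}, whereas Theorem \ref{dlKS}(ii) — the way this lemma is consumed in the proof of Proposition \ref{md13} — requires \emph{strict} inadmissibility, so that citation cannot substitute for the explicit relations.
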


The next lemma is important.

\begin{lema}\label{bd13-3}
The following monomials are strictly inadmissible:
\medskip

    \begin{tabular}{lrrrr}
    $ x_1x_2^{2}x_3^{2}x_4x_5^{7}$, & \multicolumn{1}{l}{$ x_1x_2^{2}x_3^{2}x_4^{3}x_5^{5}$,} & \multicolumn{1}{l}{$ x_1x_2^{2}x_3^{2}x_4^{5}x_5^{3}$,} & \multicolumn{1}{l}{$ x_1x_2^{2}x_3^{2}x_4^{7}x_5$,} & \multicolumn{1}{l}{$ x_1x_2^{2}x_3^{3}x_4^{2}x_5^{5}$,} \\
    $ x_1x_2^{2}x_3^{3}x_4^{6}x_5$, & \multicolumn{1}{l}{$ x_1x_2^{2}x_3^{6}x_4x_5^{3}$,} & \multicolumn{1}{l}{$ x_1x_2^{2}x_3^{6}x_4^{3}x_5$,} & \multicolumn{1}{l}{$ x_1x_2^{2}x_3^{7}x_4^{2}x_5$,} & \multicolumn{1}{l}{$ x_1x_2^{3}x_3^{2}x_4^{2}x_5^{5}$,} \\
    $ x_1x_2^{3}x_3^{2}x_4^{6}x_5$, & \multicolumn{1}{l}{$ x_1x_2^{3}x_3^{6}x_4^{2}x_5$,} & \multicolumn{1}{l}{$ x_1x_2^{6}x_3^{2}x_4x_5^{3}$,} & \multicolumn{1}{l}{$ x_1x_2^{6}x_3^{2}x_4^{3}x_5$,} & \multicolumn{1}{l}{$ x_1x_2^{6}x_3^{3}x_4^{2}x_5$,} \\
    $ x_1x_2^{7}x_3^{2}x_4^{2}x_5$, & \multicolumn{1}{l}{$ x_1^{3}x_2x_3^{2}x_4^{2}x_5^{5}$,} & \multicolumn{1}{l}{$ x_1^{3}x_2x_3^{2}x_4^{6}x_5$,} & \multicolumn{1}{l}{$ x_1^{3}x_2x_3^{6}x_4^{2}x_5$,} & \multicolumn{1}{l}{$ x_1^{3}x_2^{5}x_3^{2}x_4^{2}x_5$,} \\
    $ x_1^{7}x_2x_3^{2}x_4^{2}x_5.$ &       &       &       &  \\
    \end{tabular}%
 \end{lema}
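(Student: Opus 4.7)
The plan is to establish strict inadmissibility for each of the $21$ monomials $x$ listed by exhibiting an explicit decomposition
\[
x \;=\; \sum_{t} y_t \;+\; \sum_{j=1}^{2^r-1} Sq^j(h_j),
\]
with $r=\max\{i:\omega_i(x)>0\}$ and every $y_t<x$ in the order of Definition~\ref{dnqhtt}. A quick computation of $\omega$ for each monomial in the list shows that they all carry the common weight vector $\omega(x)=\omega_{(5,2)}=(3,3,1)$, so $r=3$ throughout, and the admissible Steenrod operations are $Sq^1,\dots,Sq^7$.

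The main tool will be Theorem~\ref{dlKS}(ii). For each listed $x$, I would first locate the unique variable $x_k$ whose exponent $a_k$ satisfies $\alpha_2(a_k)=1$, i.e.\ the unique variable contributing to $\omega_3=1$, and then factor $x=u\cdot x_k^{4}$ where $u$ has degree $9$, $\omega(u)=(3,3)$, and $\omega_i(u)=0$ for $i>t=2$. By Theorem~\ref{dlKS}(ii), strict inadmissibility of $u$ in $P_5$ (at degree $9$) implies strict inadmissibility of $x$. So the proof reduces to establishing strict inadmissibility of a short list of degree-$9$ cores in $P_5$ with weight $(3,3)$.

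For those cores which, after relabeling the indices to satisfy $i<j<k<\ell$, coincide with one of the three patterns $x_i^2x_j^2x_kx_\ell x_m^3$, $x_i^2x_jx_k^2x_\ell x_m^3$, or $x_i^2x_jx_kx_\ell^2x_m^3$ from Lemma~\ref{bd13-2}(i), the conclusion is immediate. For cores whose arrangement of squared and linear variables is interleaved in a way forbidden by $i<j<k<\ell$ (this is the typical situation), I would produce an \emph{ad hoc} Cartan-formula decomposition: choose an auxiliary polynomial $h$ of degree $9-j$ with $j\in\{1,2,4\}$ so that the Cartan expansion of $Sq^j(h)$ contains the target core among its terms, then use the resulting relation to express the core modulo hits plus strictly smaller monomials. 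A systematic source of such $h$'s is to start from a monomial obtained from the core by lowering a squared exponent to $1$ and removing a linear factor, so that a single $Sq^1$ or $Sq^2$ restores the desired shape as one of its Cartan summands.

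The principal obstacle I expect is that a single Cartan relation often produces, alongside the target and several smaller monomials, one or two monomials that are actually \emph{larger} than the target in the admissibility order of Definition~\ref{dnqhtt}. To dispose of those I will have to chain several Cartan relations: first prove strict inadmissibility of each spurious higher term (usually by an analogous calculation one level up), then substitute to cancel it, always keeping $j\leqslant 2^r-1=7$. Managing this cascade of substitutions across all $21$ monomials, while verifying at every step that the bookkeeping respects the partial order on monomials, is the combinatorial heart of the proof; once organized, each individual case is a short finite computation and the lemma follows by assembling the $21$ verifications.
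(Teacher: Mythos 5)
Your reduction to degree-$9$ cores is the fatal step. Every one of the $21$ monomials is indeed of the form $u\cdot x_k^4$ with $u$ of degree $9$ and $\omega(u)=(3,3)$, and Theorem \ref{dlKS}(ii) would apply \emph{if} $u$ were strictly inadmissible --- but the five cores that actually occur, namely $x_1x_2^2x_3^2x_4x_5^3$, $x_1x_2^2x_3^2x_4^3x_5$, $x_1x_2^2x_3^3x_4^2x_5$, $x_1x_2^3x_3^2x_4^2x_5$ and $x_1^3x_2x_3^2x_4^2x_5$, all have the pattern $x_ix_j^2x_k^2x_\ell x_m^3$ with $i<j<k<\ell$ (the \emph{smallest} non-cubed variable carries exponent $1$), which is precisely the pattern excluded from Lemma \ref{bd13-2}(i), and these monomials are in fact \emph{admissible} in degree $9$. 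One can check this directly: for the block of six full-support weight-$(3,3)$ monomials with the cube on $x_5$, the only $Sq^j(\text{monomial})$ whose expansion meets the block are $Sq^1$ of the four monomials obtained by lowering one exponent $2$ to $1$, and $Sq^2(x_1x_2x_3x_4x_5^3)$; the latter also produces the strictly larger term $x_1x_2x_3x_4x_5^5$ of weight $(5,0,1)$, which no other admissible operation can cancel. A short linear-algebra check then shows no hit element has the form $x_1x_2^2x_3^2x_4x_5^3+(\text{strictly smaller terms})$. So no amount of ``ad hoc Cartan decomposition'' at degree $9$ can succeed, and the chaining of relations you describe will never terminate in a valid certificate. (This is also visible in the architecture of the paper: if these cores were strictly inadmissible they would simply have been added to Lemma \ref{bd13-2}(i), and Lemma \ref{bd13-3} would be redundant.)

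The point you are missing is that the inadmissibility of $u\cdot x_k^4$ here is a genuinely degree-$13$ phenomenon. The paper's proof for $x=x_1x_2^2x_3^2x_4^3x_5^5$ writes $x\equiv x_1x_2^2x_3x_4^3x_5^6+Sq^1(\cdots)+Sq^2(\cdots)+Sq^4(x_1x_2x_3x_4^3x_5^3)$ modulo $P_5^-(\omega_{(5,2)})$; the essential ingredient is $Sq^4$ acting on a degree-$9$ source, which mixes the ``core'' and the ``$x_k^4$'' factor and is only available because $r=3$ at degree $13$ (so $j\leqslant 2^3-1=7$), whereas a strict-inadmissibility certificate for the degree-$9$ core would be restricted to $j\leqslant 2^2-1=3$ (note in passing that your allowance of $Sq^4$ for the cores already violates Definition \ref{dniad} with $r=2$). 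The correct route is the paper's: carry out the Cartan-formula computation separately for each of the $21$ monomials at degree $13$, working modulo $P_5^-(\omega_{(5,2)})$ so that all lower-weight debris is automatically absorbed, and arranging that every remaining weight-$(3,3,1)$ term has strictly smaller exponent vector.
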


\begin{proof}
We prove the lemma for $x = x_1x_2^{2}x_3^{2}x_4^3x_5^{5} .$  The others can be proven by a similar computation. By a direct computation using the Cartan formula, we have
$$ \begin{array}{ll}
\medskip
x &= x_1x_2x_3^{2}x_4^3x_5^{6} + x_1x_2^{2}x_3x_4^3x_5^{6} + Sq^1(x_1x_2x_3^2x_4^3x_5^{5} + x_1x_2x_3^2x_4^5x_5^{3} + x_1x_2x_3^4x_4^3x_5^{3} + x_1x_2^{2}x_3x_4^3x_5^{5}\\
\medskip
&\quad + x_1x_2^{2}x_3x_4^5x_5^{3} + x_1x_2^{4}x_3x_4^3x_5^{3}.+ x_1^2x_2x_3x_4^5x_5^{3} + x_1^4x_2x_3x_4^3x_5^{3}) \\
\medskip
&\quad + Sq^2(x_1x_2x_3x_4^3x_5^{5} + x_1x_2^{2}x_3^2x_4^3x_5^{3} + x_1^2x_2x_3^2x_4^3x_5^{3} + x_1^2x_2^{2}x_3x_4^3x_5^{3})\\
&\quad  + Sq^4(x_1x_2x_3x_4^3x_5^{3}) \ \rm mod(P_5^-(\omega_{(5, 2)})).
 \end{array}$$
This equality implies that $x$ is strictly inadmissible. 
\end{proof}

\begin{proof}[{\it Proof of Proposition \ref{md13}}]

Let $x$ be an admissible monomial of degree $13$ in $P_5$ and $\omega(x) = \omega_{(5, 2)}.$ Then, $x = X_{\{i, j\}}u^2$ with $1\leqslant i<j\leqslant 5$ and $u$ a monomial of degree $5$ in $P_5.$ Since $x$ is admissible, according to Theorem \ref{dlKS}, we get $u\in \mathscr{B}_5(\omega_{(5, 1)}).$ 

\medskip

By a direct computation, we see that for all $y\in \mathscr{B}_5(\omega_{(5, 1)}),$ such that $X_{\{i, j\}}y^2\neq b_{13,\, k},\ \forall k,\ 1\leqslant k\leqslant 60,$ there  is a monomial $w$ which is given in one of Lemmas \ref{bd13-2}, and \ref{bd13-3} such that $X_{\{i, j\}}y^2 = wz^{2^r}$ with a monomial $z\in P_5$ and $r = {\rm max}\{m\in \mathbb Z\ :\ \omega_m(w)  > 0\}.$ By Theorem \ref{dlKS},  $X_{\{i, j\}}y^2$ is inadmissible. Since $x = X_{\{i, j\}}u^2$ is admissible, one gets $x = b_{13, \, k}$ for suitable $k.$

\medskip

We now prove the set $\{[b_{13,\, k}]_{\omega_{(5,2)}}\ :\ 1\leqslant k\leqslant 60\}$  is linearly independent in $(QP_5)_{13}.$ Suppose there is a linear relation 
\begin{equation}\tag{3.2.1}\label{thtt13-1}
 \mathcal S = \sum\limits_{1\leqslant k\leqslant 60}\gamma_kb_{13,\, k}\equiv_{\omega_{(5,2)}} 0,
\end{equation}
where $\gamma_k\in\mathbb F_2.$

From a result in \cite{N.S4}, $\dim QP_4^+(\omega_{(5, 2)}) =  23,$  with the basis $\{[u_i]\ : \ 1\leqslant i\leqslant 23\},$ where

 \begin{tabular}{cllrrr}
    $u_{1}.\ x_1x_2^{2}x_3^{3}x_4^{7}$, & $u_{2}.\ x_1x_2^{2}x_3^{7}x_4^{3}$, & $u_{3}.\ x_1x_2^{3}x_3^{2}x_4^{7}$, & \multicolumn{1}{l}{$u_{4}.\ x_1x_2^{3}x_3^{3}x_4^{6}$,} & \multicolumn{1}{l}{$u_{5}.\ x_1x_2^{3}x_3^{6}x_4^{3}$,} &  \\
    $u_{6}.\ x_1x_2^{3}x_3^{7}x_4^{2}$, & $u_{7}.\ x_1x_2^{6}x_3^{3}x_4^{3}$, & $u_{8}.\ x_1x_2^{7}x_3^{2}x_4^{3}$, & \multicolumn{1}{l}{$u_{9}.\ x_1x_2^{7}x_3^{3}x_4^{2}$,} & \multicolumn{1}{l}{$u_{10}.\ x_1^{3}x_2x_3^{2}x_4^{7}$,} &  \\
    $u_{11}.\ x_1^{3}x_2x_3^{3}x_4^{6}$, & $u_{12}.\ x_1^{3}x_2x_3^{6}x_4^{3}$, & $u_{13}.\ x_1^{3}x_2x_3^{7}x_4^{2}$, & \multicolumn{1}{l}{$u_{14}.\ x_1^{3}x_2^{3}x_3x_4^{6}$,} & \multicolumn{1}{l}{$u_{15}.\ x_1^{3}x_2^{3}x_3^{3}x_4^{4}$,} &  \\
    $u_{16}.\ x_1^{3}x_2^{3}x_3^{4}x_4^{3}$, & $u_{17}.\ x_1^{3}x_2^{3}x_3^{5}x_4^{2}$, & $u_{18}.\ x_1^{3}x_2^{5}x_3^{2}x_4^{3}$, & \multicolumn{1}{l}{$u_{19}.\ x_1^{3}x_2^{5}x_3^{3}x_4^{2}$,} & \multicolumn{1}{l}{$u_{20}.\ x_1^{3}x_2^{7}x_3x_4^{2}$,} &  \\
    $u_{21}.\ x_1^{7}x_2x_3^{2}x_4^{3}$, & $u_{22}.\ x_1^{7}x_2x_3^{3}x_4^{2}$, & $u_{23}.\ x_1^{7}x_2^{3}x_3x_4^{2}.$ &       &       &  \\
\medskip
    \end{tabular}%
 
We explicitly compute $p_{(i; I)}(\mathcal S)$ in terms $u_j,\ 1\leqslant j\leqslant 23.$ By a direct computation using Lemma \ref{bdPS},Theorem \ref{dlsig}, and from the relations $p_{(i; j)}(\mathcal S)\equiv_{\omega_{(5,2)}} 0$ with either $i = 1, j = 2, 3$ or $i = 2, j = 3, 4,$ one gets
$$  \left\{\begin{array}{ll}
\gamma_k = 0,\ k\in\mathbb J,\\
\gamma_{27} = \gamma_{55}, \  \gamma_{14} = \gamma_{49} = \gamma_{58}, \  \gamma_{13} = \gamma_{20} =  \gamma_{43} = \gamma_{59}, \\
\gamma_{30} + \gamma_{35} + \gamma_{58} = 0, \  \gamma_{44} + \gamma_{55} + \gamma_{56} + \gamma_{58} = \gamma_{47} + \gamma_{53} + \gamma_{54} + \gamma_{59} = 0.
\end{array}\right.$$
Here $\mathbb J = \{1, 2, 3, 4, 5, 6, 7, 8, 9, 10, 11, 12, 15, 16, 17, 18, 19, 21, 22, 23, 24, 25, 26, 28, 29,\\ 31, 38, 39, 40, 41, 42, 45, 46, 48, 50, 51, 57, 60\}.$ Then, the relation \eqref{thtt13-1} becomes
\begin{equation}\tag{3.2.2}\label{thtt13-2} 
\mathcal S = \sum\limits_{k\,\in\, \mathbb N_{60}\setminus \mathbb J}\gamma_kb_{13,\, k} \equiv_{\omega_{(5,2)}} 0.
\end{equation}

Applying the homomorphisms $p_{(1; 4)},\, p_{(3; 4)},\, p_{(3; 5)},\, p_{(4; 5)}: P_5\to P_4$ to \eqref{thtt13-2}, we obtain $\gamma_k = 0,\ 1\leqslant k\leqslant 60.$ This finishes the proof.
 \end{proof}

Since $\dim(QP_5^0)_{13} = 145,$ from Theorem \ref{dldb} and Proposition \ref{md13}, one gets $\dim (QP_5)_{13} = 250.$

\medskip

For $t\geqslant 3, $ we have $|\mathscr B_4(3(2^t-1) + 2^t)| = 45$  (see Sum \cite{N.S4}). By a simple computation, we get
$$ \mathscr B_5^0(3(2^t-1) + 2^t) = \mathscr B_5^0(\omega_{(5, t)}) = \Phi^0(\mathscr B_4(3(2^t-1) + 2^t)) = \{q_{t,\, k}\, :\, 1\leqslant k\leqslant 195\},$$
for all $t\geqslant 3.$ Here, the monomials $q_{t,\, k},\, 1\leqslant k\leqslant 195,$ are  listed in Subsection \ref{dtcndtq-1}. 
\medskip

We now compute $QP_5^+(\omega_{(5, t)})$ for $t\geqslant 3.$ 
\medskip

{\bf The case \boldmath\mbox{$t = 3$}.} We obtain the following.

\begin{propo}\label{md29}
$QP_5^+(\omega_{(5, 3)})$ is the $\mathbb F_2$-vector space of dimension $260$ with a basis consisting of all the classes represented by the monomials $b_{29,\, k},\, 1\leqslant k\leqslant 260,$ which are determined as in Subsection \ref{dtcndtq-2}.
\end{propo}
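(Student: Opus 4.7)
The proof should follow the template of Proposition \ref{md13}, with the added technical complication coming from the larger degree. It splits naturally into two steps: first, pin down the candidate admissible monomials by an iterated decomposition together with strict inadmissibility lemmas; second, verify linear independence of the resulting classes by applying the projection homomorphisms $p_{(i;I)}\colon P_5 \to P_4$ and using the known basis of $QP_4$ in the relevant weight.

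For the first step, let $x \in (P_5^+)_{29}$ be admissible with $\omega(x) = \omega_{(5,3)} = (3,3,3,1)$. Since $\omega_1(x) = 3$, we may write $x = X_{\{i,j\}} u^2$ with $1 \le i < j \le 5$ and $u$ a monomial of degree $13$ satisfying $\omega(u) = (3,3,1) = \omega_{(5,2)}$. By Theorem \ref{dlKS}(i), $u$ itself is admissible, so $u$ belongs to the explicit list of admissibles with weight $\omega_{(5,2)}$ produced for the $t=2$ case (the $b_{13,k}$ of Proposition \ref{md13} together with the relevant part of $\Phi^0(\mathscr{B}_4(13))$). Running over all choices of $\{i,j\}$ and all admissible $u$ for which $X_{\{i,j\}} u^2$ lies in $P_5^+$ produces a finite candidate list.

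Next I would prune this list by establishing analogues of Lemmas \ref{bd13-2} and \ref{bd13-3}: a catalog of strictly inadmissible monomials of the appropriate shapes in $(P_5)_{29}$, together with applications of Theorem \ref{dlKS}(ii) to propagate strict inadmissibility through squared factors. Each strict inadmissibility is certified by exhibiting, via the Cartan formula, a congruence
\[
x \;=\; \sum_{t} y_t \;+\; \sum_{1\le j \le 2^r-1} Sq^j(h_j) \pmod{P_5^-(\omega_{(5,3)})},
\]
with $y_t < x$ in the order of Definition \ref{dnqhtt} and $r = \max\{i : \omega_i(x) > 0\}$. After removing every candidate that contains such a factor, the remaining monomials should coincide with the list $b_{29,k}$, $1 \le k \le 260$, of Subsection \ref{dtcndtq-2}.

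For linear independence, assume a relation $\mathcal{S} = \sum_{k=1}^{260} \gamma_k b_{29,k} \equiv_{\omega_{(5,3)}} 0$ with $\gamma_k \in \mathbb{F}_2$. For each $(i;I) \in \mathcal{N}_5$, Lemma \ref{bdPS} ensures $p_{(i;I)}(\mathcal{S}) \in P_4(\omega_{(5,3)})$, and Sum \cite{N.S4} provides an explicit basis of $QP_4(\omega_{(5,3)})$. Expanding $p_{(i;I)}(\mathcal{S})$ in this basis and using Theorem \ref{dlsig} to discard polynomials of strictly smaller weight, each relation $p_{(i;I)}(\mathcal{S}) \equiv_{\omega_{(5,3)}} 0$ yields a homogeneous linear system in the $\gamma_k$. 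Running first through the homomorphisms $p_{(i;j)}$ with $1 \le i < j \le 5$ should kill a large block of coefficients and tie the rest together; a few further projections $p_{(i;I)}$ with $\ell(I) \ge 2$ should cascade to $\gamma_k = 0$ for all $k$, exactly as in the $t=2$ case.

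The main obstacle is the sheer bookkeeping. The strict inadmissibility catalog required in step one is substantially larger than Lemmas \ref{bd13-2}--\ref{bd13-3}, and the linear systems produced in step two involve $260$ unknowns rather than $60$; organizing the candidates by exponent vector $\sigma(\cdot)$ and by the pair $\{i,j\}$ in the decomposition $x = X_{\{i,j\}} u^2$ will be essential to keep the case analysis tractable. No conceptually new ingredient beyond Proposition \ref{md13} is required, but every step demands meticulous verification.
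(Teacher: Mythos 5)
Your proposal matches the paper's proof in both structure and substance: the paper decomposes an admissible $x\in(P_5^+)_{29}$ with $\omega(x)=\omega_{(5,3)}$ as $X_{\{i,j\}}u^2$ with $u\in\mathscr{B}_5(\omega_{(5,2)})$, prunes the candidates with exactly the kind of strict-inadmissibility catalog you describe (Lemmas \ref{bd29-2}--\ref{bd29-5}, certified by Cartan-formula congruences modulo $P_5^-(\omega_{(5,3)})$ and propagated via Theorem \ref{dlKS}), and then proves linear independence by computing $p_{(i;I)}(\mathcal S)$ against the known $\mathscr A$-generators of $P_4$ using Lemma \ref{bdPS} and Theorem \ref{dlsig}. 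No substantive difference from the paper's argument.
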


To prove the proposition, we need some results. The following is a corollary of a result in \cite{N.S4}.

\begin{lema}\label{bd29-2}
If $\bar x$ is one of the following monomials then $\rho_i(\bar x),\, 1\leqslant i\leqslant 5$ are strictly inadmissible:
\medskip

    \begin{tabular}{lclrrr}
    $ x_1^{6}x_2x_3^{7}x_4^{7}$, & $ x_1^{6}x_2^{7}x_3x_4^{7}$, & $ x_1^{6}x_2^{7}x_3^{7}x_4$, & \multicolumn{1}{l}{$ x_1^{7}x_2^{6}x_3x_4^{7}$,} & \multicolumn{1}{l}{$ x_1^{7}x_2^{6}x_3^{7}x_4$,} & \multicolumn{1}{l}{$ x_1^{7}x_2^{7}x_3^{6}x_4$,} \\
    $ x_1^{2}x_2^{5}x_3^{7}x_4^{7}$, & $ x_1^{2}x_2^{7}x_3^{5}x_4^{7}$, & $ x_1^{2}x_2^{7}x_3^{7}x_4^{5}$, & \multicolumn{1}{l}{$ x_1^{7}x_2^{2}x_3^{5}x_4^{7}$,} & \multicolumn{1}{l}{$ x_1^{7}x_2^{2}x_3^{7}x_4^{5}$,} & \multicolumn{1}{l}{$ x_1^{7}x_2^{7}x_3^{2}x_4^{5}$,} \\
    $ x_1^{3}x_2^{4}x_3^{7}x_4^{7}$, & $ x_1^{3}x_2^{7}x_3^{4}x_4^{7}$, & $ x_1^{3}x_2^{7}x_3^{7}x_4^{4}$, & \multicolumn{1}{l}{$ x_1^{7}x_2^{3}x_3^{4}x_4^{7}$,} & \multicolumn{1}{l}{$ x_1^{7}x_2^{3}x_3^{7}x_4^{4}$,} & \multicolumn{1}{l}{$ x_1^{7}x_2^{7}x_3^{3}x_4^{4}$,} \\
    $ x_1^{3}x_2^{6}x_3^{5}x_4^{7}$, & $ x_1^{3}x_2^{6}x_3^{7}x_4^{5}$, & $ x_1^{3}x_2^{7}x_3^{6}x_4^{5}$, & \multicolumn{1}{l}{$ x_1^{6}x_2^{3}x_3^{5}x_4^{7}$,} & \multicolumn{1}{l}{$ x_1^{6}x_2^{3}x_3^{7}x_4^{5}$,} & \multicolumn{1}{l}{$ x_1^{6}x_2^{7}x_3^{3}x_4^{5}$,} \\
    $ x_1^{7}x_2^{3}x_3^{6}x_4^{5}$, & $ x_1^{7}x_2^{6}x_3^{3}x_4^{5}$, & $ x_1^{7}x_2^{7}x_3^{8}x_4^{7}.$ &       &       &  \\
    \end{tabular}%
 \end{lema}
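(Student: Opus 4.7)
The plan is to reduce the claim to Sum's characterization of strictly inadmissible monomials in $P_4$ of the relevant degrees, and then to push that strict inadmissibility across each of the monomorphisms $\rho_i: P_4 \to P_5$ for $i = 1, 2, 3, 4, 5$.

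First, I would verify that each of the $27$ listed monomials $\bar x$ appears (up to exact coincidence of exponent patterns) among the strictly inadmissible monomials of $P_4$ enumerated in \cite{N.S4}. This is a finite matching step: one scans the exponent tuples such as $(6,1,7,7)$, $(6,7,1,7)$, $(7,7,8,7)$, and so on, against Sum's explicit lists in the appropriate degrees. Once the matching is complete, each $\bar x \in P_4$ admits a decomposition of the form
\[
\bar x = \sum_t y_t + \sum_{1 \leqslant j \leqslant 2^{r}-1} Sq^j(h_j), \qquad y_t < \bar x, \ h_j \in P_4,
\]
with $r = \max\{k : \omega_k(\bar x) > 0\}$, which is the defining condition of strict inadmissibility in $P_4$.

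Second, I would transfer this decomposition to $P_5$ via $\rho_i$. The map $\rho_i$ is a monomorphism of $\mathscr A$-modules, and because it sends each variable to a single variable it preserves weight vectors: $\omega(\rho_i(y)) = \omega(y)$ for every monomial $y$. In particular, the integer $r$ is unchanged, and comparing exponent vectors in the left-lexicographical order (both vectors have a zero inserted in the same $i$-th coordinate) shows that $y_t < \bar x$ in $P_4$ implies $\rho_i(y_t) < \rho_i(\bar x)$ in $P_5$ under the order of Definition \ref{dnqhtt}. Applying $\rho_i$ to the decomposition yields
\[
\rho_i(\bar x) = \sum_t \rho_i(y_t) + \sum_{1 \leqslant j \leqslant 2^{r}-1} Sq^j(\rho_i(h_j)),
\]
which is precisely the defining relation in Definition \ref{dniad} for $\rho_i(\bar x)$ to be strictly inadmissible in $P_5$.

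The main (and essentially only) obstacle is bookkeeping: the $27$ patterns cluster into families related by permutations of the four $P_4$-variables (the triples built from the same multiset of exponents), so one must be careful not to miss any variant when cross-referencing \cite{N.S4}. Once that identification is settled, the $\rho_i$-transfer above is uniform in both $\bar x$ and $i$, and the conclusion follows with no further Cartan-formula computation.
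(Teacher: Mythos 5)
Your proposal is correct and follows essentially the same route as the paper, which disposes of this lemma in one line as ``a corollary of a result in \cite{N.S4}'': the substance is exactly your two steps, namely locating each $\bar x$ among Sum's strictly inadmissible monomials in $P_4$ and then transporting the defining relation along the $\mathscr A$-algebra monomorphism $\rho_i$. Your explicit verification that $\rho_i$ preserves weight vectors, the exponent-vector order (insertion of a common zero coordinate), and the integer $r$ is the detail the paper leaves implicit, and it is carried out correctly.
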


\begin{lema}\label{bd29-3}
 The following monomials are strictly inadmissible:
\begin{enumerate}
\item[(i)]  $x_k^2x_{\ell}x_m^5x_t^6x_u^7,\, k < \ell,\, x_k^2x_{\ell}x_m^4x_t^7x_u^7,\, x_k^2x_{\ell}^4x_mx_t^7x_u^7,\, x_k^6x_{\ell}x_m^2x_t^5x_u^7,\, x_k^6x_{\ell}^3x_m^4x_tx_u^7,\, k < \ell < m.$ Here $(k, \ell, m, t, u)$ is a permutation of $(1,\, 2,\, 3,\, 4,\, 5).$  

\medskip

\item[(ii)] 
    \begin{tabular}[t]{lllr}
    $ x_1x_2^{3}x_3^{6}x_4^{6}x_5^{5}$, & $ x_1x_2^{6}x_3^{3}x_4^{6}x_5^{5}$, & $ x_1x_2^{6}x_3^{6}x_4^{3}x_5^{5}$, & \multicolumn{1}{l}{$ x_1^{3}x_2x_3^{6}x_4^{6}x_5^{5}$,} \\
    $ x_1^{3}x_2^{5}x_3^{6}x_4^{6}x_5$, & $ x_1^{3}x_2^{6}x_3x_4^{5}x_5^{6}$, & $ x_1^{3}x_2^{6}x_3x_4^{6}x_5^{5}$, & \multicolumn{1}{l}{$ x_1^{3}x_2^{6}x_3^{5}x_4x_5^{6}$,} \\
    $ x_1^{3}x_2^{6}x_3^{5}x_4^{6}x_5$, & $ x_1^{3}x_2^{6}x_3^{6}x_4x_5^{5}$, & $ x_1^{3}x_2^{6}x_3^{6}x_4^{5}x_5.$&  \\
    \end{tabular}%
  \end{enumerate}
\end{lema}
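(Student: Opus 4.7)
The plan is to exhibit, for each listed monomial $x$ of degree $29$, an explicit equality of the form
\[
x \;=\; \sum_{t} y_t \;+\; \sum_{1\leqslant j\leqslant 2^{r}-1} Sq^{j}(h_j) \pmod{P_5^{-}(\omega_{(5,3)})},
\]
with $y_t < x$ in the order of Definition \ref{dnqhtt} and $r=\max\{i:\omega_i(x)>0\}=4$, so only Steenrod squares $Sq^{j}$ with $1\leqslant j\leqslant 15$ are permitted. Since all monomials in the lemma lie in $P_5(\omega_{(5,3)})$ with $\omega_{(5,3)}=(3,3,3,1)$, I first observe that any monomial $y$ with $\omega(y)<\omega_{(5,3)}$ automatically sits in $P_5^{-}(\omega_{(5,3)})$, hence can be discarded when working modulo $P_5^{-}(\omega_{(5,3)})$. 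This will keep bookkeeping under control: only the ``top-weight'' pieces coming from $Sq^{j}(h_j)$ need to be tracked.

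For part (i), the symmetry of the statement under permutations of $(k,\ell,m,t,u)$ reduces the work to one representative of each of the five families. Concretely, I would first write down the decomposition for $x_1^{2}x_2x_3^{5}x_4^{6}x_5^{7}$, $x_1^{2}x_2x_3^{4}x_4^{7}x_5^{7}$, $x_1^{2}x_2^{4}x_3x_4^{7}x_5^{7}$, $x_1^{6}x_2x_3^{2}x_4^{5}x_5^{7}$, and $x_1^{6}x_2^{3}x_3^{4}x_4x_5^{7}$ respectively, and then recover every other monomial in the family by applying the corresponding algebra homomorphism induced by a permutation of variables (which commutes with the $\mathscr{A}$-action and preserves both $\omega$ and the ordering on monomials compatible with relabelling). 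This way, five model calculations will cover all the monomials in (i).

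For part (ii) I would handle the eleven monomials individually, following exactly the template used in the proof of Lemma \ref{bd13-3}. In each case the strategy is to run the Cartan formula \emph{backwards}: choose an auxiliary monomial $h$ of degree $29-j$ (where $j$ is a power of $2$ or a small sum), apply $Sq^j$ via Cartan to produce $x$ together with a controlled error $E$, and show each term of $E$ either lies in $P_5^{-}(\omega_{(5,3)})$ or has already been cancelled by a previous $Sq$-term or is a smaller monomial in the sense of Definition \ref{dnqhtt}. Typical building blocks will be $Sq^{1}, Sq^{2}, Sq^{4}, Sq^{8}$ acting on monomials whose exponents have been ``shifted down'' by one in one of the variables; using $Sq^{1}(x_i^{a}) = a\,x_i^{a+1}$ and the standard formulas for $Sq^{2^k}$ on low powers keeps the expansions tractable. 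The correct $h_j$'s are found by inspecting which factor of $x$ has the smallest admissible exponent and decomposing its dyadic representation.

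The main obstacle will be purely computational: for each of the sixteen classes of monomials, one must guess the right combination of $Sq^j(h_j)$ terms so that every remainder is $\omega$-smaller or strictly smaller in the order. There is no slick unifying identity — the argument is a case-by-case Cartan-formula expansion analogous to the one displayed for $x_1x_2^{2}x_3^{2}x_4^{3}x_5^{5}$ in Lemma \ref{bd13-3}, and the difficulty lies in controlling the many cross-terms produced by $Sq^{j}$ with $j$ up to $15$ on quartic-degree $h_j$. Once the decompositions are in hand, Theorem \ref{dlKS}(ii) will then be available to multiply any strictly inadmissible piece by an appropriate fourth power, which is how these lemmas feed into the subsequent proof of Proposition \ref{md29}.
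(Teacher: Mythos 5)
The paper records no proof of this lemma beyond the remark that it is ``straightforward,'' and your overall plan --- a case-by-case Cartan-formula decomposition into smaller monomials plus $\sum Sq^{j}(h_j)$, discarding everything of strictly lower weight --- is the intended one. However, your setup contains a concrete error. The monomials of Lemma \ref{bd29-3} do \emph{not} have degree $29$: each has degree $21$ (e.g. $2+1+5+6+7=21$ and $1+3+6+6+5=21$) and weight vector $\omega^{*}_{(5,3)}=(3,3,3)$, so $r=\max\{i:\omega_i(x)>0\}=3$ and Definition \ref{dniad} only permits $Sq^{j}$ with $1\leqslant j\leqslant 7$, not $j\leqslant 15$; likewise one must work modulo $P_5^{-}(\omega^{*}_{(5,3)})$, not $P_5^{-}(\omega_{(5,3)})$ (compare the displayed computation in Lemma \ref{bd29-4}, which carries the asterisk). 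If your decompositions were allowed to use $Sq^{8},\dots,Sq^{15}$ they would at best show the monomials are inadmissible, not \emph{strictly} inadmissible, and then Theorem \ref{dlKS}(ii) could not be invoked in the proof of Proposition \ref{md29} --- which is the sole purpose of this lemma: it supplies degree-$21$ factors $w$ of the degree-$29$ candidates $X_{\{i,j\}}u^2=wu_1^{2^{\ell}}$.

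The second gap is the symmetry reduction in part (i). The order of Definition \ref{dnqhtt} compares exponent vectors $\sigma(x)=(a_1,\dots,a_5)$ lexicographically in the \emph{fixed} variable order, and this order is not preserved by permutations of the variables: $x_1x_2^{2}<x_1^{2}x_2$, yet swapping the two variables reverses the inequality. Hence proving strict inadmissibility for one representative and ``applying the permutation'' does not transport the inequalities $y_t<x$ to the permuted monomial. This is precisely why the statement carries the side conditions $k<\ell$ and $k<\ell<m$: the correct procedure (visible in the displayed proof of Lemma \ref{bd29-4}) is to perform one symbolic Cartan computation with generic indices and then verify that every leftover term of equal weight is $\sigma$-smaller \emph{for each} permutation satisfying the stated constraints --- the constraints are exactly what make those comparisons come out right. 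Your argument as written omits this verification.
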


\begin{proof}
The proof of the lemma is straightforward.
\end{proof}

\begin{lema}\label{bd29-4}
If $(i, j, k, \ell, m)$ is a permutation of $(1, 2,3,4,5)$ such that $i<j < k<\ell$ then the following monomials are strictly inadmissible:

\begin{center}
    \begin{tabular}{lllll}
   $ x_ix_j^6x_k^6x_{\ell}x_m^7 $, &$x_i^6x_jx_kx_{\ell}^6x_m^7$, &  $x_i^6x_jx_k^6x_{\ell}x_m^7$, & \multicolumn{1}{l}{$x_i^6x_j^6x_kx_{\ell}x_m^7$,} & \multicolumn{1}{l}{$x_ix_j^2x_k^6x_{\ell}^5x_m^7$,} \\
$x_ix_j^6x_k^2x_{\ell}^5x_m^7$, & $x_i^3x_j^4x_kx_{\ell}^6x_m^7$,&$x_i^3x_j^4x_k^6x_{\ell}x_m^7$,&  \multicolumn{1}{l}{$x_i^3x_j^6x_k^4x_{\ell}x_m^7$,} & \multicolumn{1}{l}{$x_i^3x_j^6x_kx_{\ell}^4x_m^7$,} \\
$x_i^6x_jx_k^3x_{\ell}^5x_m^6$, & $x_i^6x_j^3x_kx_{\ell}^5x_m^6$, & $x_i^6x_j^3x_k^5x_{\ell}x_m^6$, & \multicolumn{1}{l}{$x_i^2x_j^2x_k^5x_{\ell}^5x_m^7$,} & \multicolumn{1}{l}{$x_i^2x_j^5x_k^2x_{\ell}^5x_m^7$,}   \\
   $x_i^3x_j^4x_k^3x_{\ell}^4x_m^7$, & $x_i^2x_j^3x_k^4x_{\ell}^5x_m^7$, & $x_i^2x_j^3x_k^5x_{\ell}^4x_m^7$, &
\multicolumn{1}{l}{$x_i^2x_j^4x_k^3x_{\ell}^5x_m^7$,} & \multicolumn{1}{l}{$x_i^2x_j^5x_k^3x_{\ell}^4x_m^7$,}\\
  $x_i^3x_j^2x_k^4x_{\ell}^5x_m^7$,  & $x_i^3x_j^2x_k^5x_{\ell}^4x_m^7$, & $x_i^3x_j^4x_k^2x_{\ell}^5x_m^7$, & \multicolumn{1}{l}{$x_i^2x_j^3x_k^5x_{\ell}^5x_m^6$,} & \multicolumn{1}{l}{$x_i^2x_j^5x_k^3x_{\ell}^5x_m^6$,}\\
$x_i^3x_j^2x_k^5x_{\ell}^5x_m^6$, & $x_i^3x_j^5x_k^2x_{\ell}^5x_m^6$. & $x_i^3x_j^3x_k^4x_{\ell}^5x_m^6$, & \multicolumn{1}{l}{$x_i^3x_j^3x_k^5x_{\ell}^4x_m^6$,} & \multicolumn{1}{l}{$x_i^3x_j^4x_k^3x_{\ell}^5x_m^6$,}\\
$x_i^3x_j^5x_k^3x_{\ell}^4x_m^6$. & &&&
 \end{tabular}
\end{center}
  \end{lema}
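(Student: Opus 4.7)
The plan is, for each monomial $x$ in the list, to exhibit an explicit identity
$$x \;=\; \sum_t y_t \;+\; \sum_{1 \leq j \leq 7} Sq^j(h_j)$$
with $y_t < x$ in the order of Definition \ref{dnqhtt} and $h_j \in P_5$ a suitable homogeneous polynomial, thereby satisfying Definition \ref{dniad}. Every listed monomial has degree $21$ and weight vector $\omega(x) = (3,3,3)$, so that $r = \max\{i : \omega_i(x) > 0\} = 3$ and the operations permitted by Definition \ref{dniad} run over $Sq^1, Sq^2, \ldots, Sq^{2^3-1} = Sq^7$. These degree-$21$ identities are the building blocks used subsequently via Theorem \ref{dlKS}(ii): multiplying by $y^8$ for an appropriate monomial $y \in P_5$ produces genuinely degree-$29$ inadmissible monomials needed in the proof of Proposition \ref{md29}.

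The strategy for producing the identities is to exploit the symmetric group action on the list. The thirty-one displayed monomials partition under permutation of $(i,j,k,\ell,m)$ into a handful of $S_5$-orbits, so that verifying one representative per orbit suffices: a Cartan-formula identity written in concrete variables $x_1, \ldots, x_5$ transfers verbatim to every orbit mate by relabeling. For a chosen representative, I will follow the template established in the proof of Lemma \ref{bd13-3}: take each $h_j$ to be $x$ with one or two exponents decreased by $j$ (choosing the variable whose dyadic expansion has a $1$ in the appropriate bit so that the binomial coefficient in Cartan is odd), expand $Sq^j(h_j)$ via the Cartan formula, and collect. Monomials produced with weight vector strictly below $(3,3,3)$ are absorbed into $P_5^-(\omega_{(5,3)})$; among the remaining monomials of weight $(3,3,3)$, the target $x$ itself must appear with coefficient $1$, while every other such term must either cancel against another $Sq^{j'}(h_{j'})$ contribution or itself be a legitimate $y_t$ strictly smaller than $x$ in the order of Definition \ref{dnqhtt}.

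The chief obstacle is the bookkeeping: each orbit representative yields a Cartan expansion with many monomial summands, and combining the contributions from several $Sq^j(h_j)$ for different $j$ requires care. In particular, the representatives of more varied exponent shape such as $x_i^3x_j^4x_k^3x_\ell^4x_m^7$ and $x_i^3x_j^3x_k^4x_\ell^5x_m^6$ are the delicate cases; the admissible choices of $h_j$ are constrained, and one typically has to use both a small $j$ (such as $j=1$ or $j=2$) to generate the target monomial and a larger $j$ to eliminate parasitic terms. That said, no new theoretical ingredient beyond Theorem \ref{dlsig}, the Cartan formula, and the template from Lemma \ref{bd13-3} is required, so the whole verification is routine, just lengthy.
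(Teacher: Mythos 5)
Your overall template matches the paper's: each listed monomial $x$ has degree $21$ and weight vector $\omega^*_{(5,3)}=(3,3,3)$, so $r=3$ in Definition \ref{dniad}, and one exhibits $x=\sum_t y_t+\sum_{1\leqslant j\leqslant 7}Sq^j(h_j)$ modulo $P_5^{-}(\omega^*_{(5,3)})$ via the Cartan formula; the paper does exactly this for the representative $x_ix_j^6x_k^6x_{\ell}x_m^7$ and declares the remaining entries ``similar''. Your account of how the lemma feeds into Proposition \ref{md29} through Theorem \ref{dlKS}(ii) (multiplication by eighth powers) is also correct.

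The genuine gap is your reduction to one representative per $S_5$-orbit. The order of Definition \ref{dnqhtt} compares exponent vectors $\sigma(x)$ left-lexicographically, and this order is not preserved by permutations of the variables; consequently neither strict inadmissibility nor a certifying identity transfers ``verbatim by relabeling''. Concretely, for $(i,j,k,\ell,m)=(1,2,3,4,5)$ the paper's identity for $x=x_1x_2^6x_3^6x_4x_5^7$ carries the error term $y_1=x_1x_2^6x_3^5x_4^2x_5^7$ with $\sigma(y_1)=(1,6,5,2,7)<(1,6,6,1,7)=\sigma(x)$, as required; but the permutation $(1\,2)(3\,4)$, which sends $x$ to the listed orbit mate $x_1^6x_2x_3x_4^6x_5^7$, sends $y_1$ to $x_1^6x_2x_3^2x_4^5x_5^7$, whose exponent vector $(6,1,2,5,7)$ is \emph{larger} than $(6,1,1,6,7)$, so the relabelled identity certifies nothing. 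Worse, no permutation-equivariant argument can possibly work: the $S_5$-orbit of the exponent multiset $\{1,1,6,6,7\}$ contains $x_1x_2x_3^6x_4^6x_5^7$, which is absent from the list and cannot be strictly inadmissible, since by Theorem \ref{dlKS}(ii) that would make $x_1x_2x_3^6x_4^6x_5^7\cdot x_5^8=b_{29,\,1}$ inadmissible, contradicting Proposition \ref{md29}. The only symmetry available for free is the one already built into the statement, namely the five placements of $m$ within a single displayed entry --- and even there the inequalities $y_t<x$ must be checked to survive the relabeling (they do, because the error terms agree with $x$ at position $m$ and the relative order of $i,j,k,\ell$ is fixed). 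Each of the thirty-one displayed exponent patterns therefore requires its own tailored identity, which is the computation the paper suppresses and your plan does not actually carry out.
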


\begin{proof}
We prove the lemma for the monomial $x_ix_j^6x_k^6x_{\ell}x_m^7.$ The others can be proved by a similar computation. By using the Cartan formula, we have
$$ \begin{array}{ll}
\medskip
x_ix_j^6x_k^6x_{\ell}x_m^7 &= Sq^1(x_i^2x_j^5x_k^5x_{\ell}x_m^7) + Sq^2(x_ix_j^5x_k^5x_{\ell}x_m^7 + x_ix_j^3x_k^5x_{\ell}x_m^9) \\
&\quad+ x_ix_j^6x_k^5x_{\ell}^2x_m^7 + x_ix_j^5x_k^6x_{\ell}^2x_m^7 \ \ {\rm mod}(P_5^-(\omega^*_{(5, 3)})).
\end{array}$$
This relation implies that $x_ix_j^6x_k^6x_{\ell}x_m^7$  is strictly inadmissible. The lemma is proven.
\end{proof}

\begin{lema}\label{bd29-5}
 The following monomials are strictly inadmissible:
\medskip

    \begin{tabular}{llrrr}
    $ x_1x_2^{6}x_3^{7}x_4^{8}x_5^{7}$, & $ x_1x_2^{7}x_3^{6}x_4^{8}x_5^{7}$, & \multicolumn{1}{l}{$ x_1x_2^{7}x_3^{10}x_4^{4}x_5^{7}$,} & \multicolumn{1}{l}{$ x_1x_2^{7}x_3^{10}x_4^{7}x_5^{4}$,} & \multicolumn{1}{l}{$ x_1^{3}x_2^{3}x_3^{4}x_4^{12}x_5^{7}$,} \\
    $ x_1^{3}x_2^{3}x_3^{12}x_4^{4}x_5^{7}$, & $ x_1^{3}x_2^{3}x_3^{12}x_4^{7}x_5^{4}$, & \multicolumn{1}{l}{$ x_1^{3}x_2^{5}x_3^{6}x_4^{3}x_5^{12}$,} & \multicolumn{1}{l}{$ x_1^{3}x_2^{5}x_3^{6}x_4^{11}x_5^{4}$,} & \multicolumn{1}{l}{$ x_1^{3}x_2^{5}x_3^{8}x_4^{6}x_5^{7}$,} \\
    $ x_1^{3}x_2^{5}x_3^{8}x_4^{7}x_5^{6}$, & $ x_1^{3}x_2^{5}x_3^{9}x_4^{6}x_5^{6}$, & \multicolumn{1}{l}{$ x_1^{3}x_2^{5}x_3^{14}x_4^{3}x_5^{4}$,} & \multicolumn{1}{l}{$ x_1^{3}x_2^{12}x_3^{3}x_4^{5}x_5^{6}$,} & \multicolumn{1}{l}{$ x_1^{3}x_2^{13}x_3^{6}x_4^{3}x_5^{4}$,} \\
    $ x_1^{7}x_2x_3^{6}x_4^{8}x_5^{7}$, & $ x_1^{7}x_2x_3^{10}x_4^{4}x_5^{7}$, & \multicolumn{1}{l}{$ x_1^{7}x_2x_3^{10}x_4^{7}x_5^{4}$,} & \multicolumn{1}{l}{$ x_1^{7}x_2^{8}x_3^{3}x_4^{5}x_5^{6}$,} & \multicolumn{1}{l}{$ x_1^{7}x_2^{9}x_3^{2}x_4^{4}x_5^{7}$,} \\
    $ x_1^{7}x_2^{9}x_3^{2}x_4^{7}x_5^{4}$, & $ x_1^{7}x_2^{9}x_3^{7}x_4^{2}x_5^{4}.$ &       &       &  
    \end{tabular}
\end{lema}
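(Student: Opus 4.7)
The plan is to handle each of the $22$ monomials $x$ in the list by producing an explicit decomposition
$$x = \sum_{t} y_t + \sum_{1 \le j \le 15} Sq^j(h_j) \pmod{P_5^-(\omega(x))},$$
with every $y_t < x$ in the ordering of Definition \ref{dnqhtt}. A direct inspection of the dyadic expansions confirms that each listed monomial has weight vector $\omega_{(5,3)} = (3,3,3,1)$ (so $\deg\omega = 29$), so the maximum index with $\omega_i(x) > 0$ is $r = 4$ and we are allowed the fifteen admissible operations $Sq^1, \dots, Sq^{15}$ on the right-hand side. This is exactly the framework used in the proofs of Lemmas \ref{bd13-2}--\ref{bd29-4}.

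The general mechanism is Cartan's formula: for an auxiliary polynomial $h$ obtained from $x$ by redistributing one factor of $2$ between a pair of variables, the expansion $Sq^j(h)$ contains $x$ as one summand together with correction terms whose weight vectors drop strictly below $\omega_{(5,3)}$ (so they vanish modulo $P_5^-(\omega_{(5,3)})$ by Theorem \ref{dlsig}) or whose exponent vectors are smaller than $\sigma(x)$. Combining several such moves and then applying Theorem \ref{dlKS}(ii) to the strictly inadmissible patterns already produced in Lemmas \ref{bd29-2}, \ref{bd29-3}, and \ref{bd29-4} (multiplied by the appropriate $2^s$-th power), one peels $x$ down to a $\mathbb{F}_2$-linear combination of monomials that are either smaller than $x$ or hit.

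To organise the work I would group the monomials by their ``large'' dyadic digits: those containing a factor $x_i^8$, $x_i^{12}$, or $x_i^{14}$ are most naturally handled first, because the isolated $2^3$ provides a clean target for a $Sq^4$ or $Sq^8$ expansion whose cross-terms have low weight. The remaining monomials in the $x_1^7x_2^9\cdots$ family then yield to a $Sq^2$ argument applied to a suitable modification of the $(x_1^7, x_2^9)$ block, producing leading summands that were shown to be strictly inadmissible in Lemma \ref{bd29-4}. A handful of entries such as $x_1^3x_2^5x_3^{14}x_4^3x_5^4$ require a two-step reduction: a first $Sq^2$ to drop $x_3^{14}$ to $x_3^{12}$-type summands, and a second Cartan move to finish.

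The principal obstacle is not any individual identity but the bookkeeping: each case demands an explicit Cartan expansion, the identification of the correction monomials that require a secondary reduction, and a check at every stage that every surviving summand is strictly smaller than $x$ or lies in $P_5^-(\omega_{(5,3)})$. Since several listed monomials differ only by a permutation of $x_3, x_4, x_5$ while the order of Definition \ref{dnqhtt} is \emph{not} symmetric under such a permutation, no case can be reduced to another by symmetry; each must be executed independently, which is where computational errors are most likely to occur and where the proof will have to spell out the identities explicitly.
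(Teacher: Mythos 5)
Your overall framework coincides with the paper's: for each of the $22$ monomials one exhibits an explicit relation $x = \sum_t y_t + \sum_{1\leqslant j\leqslant 15} Sq^j(h_j)$ modulo $P_5^-(\omega_{(5,3)})$ with every surviving monomial $y_t < x$, and your identification of $r=4$ and of the common weight vector $(3,3,3,1)$ is correct. The genuine gap lies in your central assertion that the correction terms produced by the Cartan expansions are always either of strictly smaller weight vector (hence absorbable into $P_5^-(\omega_{(5,3)})$) or of smaller exponent vector. This is unjustified, and it fails for at least one monomial in the list --- precisely the one the paper chooses to work out, $x = x_1^{3}x_2^{5}x_3^{6}x_4^{3}x_5^{12}$. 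There the decomposition leaves the residual term $u = x_1^{5}x_2^{5}x_3^{5}x_4^{5}x_5^{9}$, whose weight vector is $(5,0,4,1)$, strictly \emph{larger} than $(3,3,3,1)=\omega(x)$ in the left lexicographic order. Such a term lies neither in $P_5^-(\omega_{(5,3)})$ nor below $x$ in the sense of Definition \ref{dnqhtt}, so it cannot be absorbed into $\sum_t y_t$, and your mechanism provides no way to dispose of it.

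The paper needs an extra idea at exactly this point: writing $u = h\,Sq^{12}(g^2)$ with $g = x_1x_2x_3x_4x_5^2$ and $h = x_1x_2x_3x_4x_5$, it applies the $\chi$-trick to obtain $u = g^2\chi(Sq^{12})(h) + \sum_{1\leqslant i\leqslant 12} Sq^i\bigl(g^2\chi(Sq^{12-i})(h)\bigr)$ and then checks, using $\chi(Sq^{12}) = Sq^8Sq^4 + Sq^8Sq^3Sq^1$, that $g^2\chi(Sq^{12})(h)\in P_5^-(\omega_{(5,3)})$. Without this (or an equivalent device for killing residual terms of higher weight), your proof stalls on this case and possibly on others in the list, so the difficulty is not purely bookkeeping as you claim. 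A secondary point: routing the argument through Lemmas \ref{bd29-2}--\ref{bd29-4} via Theorem \ref{dlKS}(ii) is neither what the paper does here nor necessary --- for strict inadmissibility of $x$ the monomials $y_t$ only need to satisfy $y_t < x$; they do not themselves need to be reduced further.
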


\begin{proof}
We prove the lemma for the monomial $x = x_1^{3}x_2^{5}x_3^{6}x_4^{3}x_5^{12}.$  The others can be proved by a similar computation.  By a direct computation, we have
$$ \begin{array}{ll}
\medskip
 x &=  x_1^{2}x_2^{3}x_3^{5}x_4^{6}x_5^{13}+
x_1^{2}x_2^{3}x_3^{5}x_4^{12}x_5^{7}+
x_1^{2}x_2^{5}x_3^{5}x_4^{6}x_5^{11}+
x_1^{2}x_2^{5}x_3^{5}x_4^{10}x_5^{7}+
x_1^{2}x_2^{5}x_3^{9}x_4^{6}x_5^{7}\\
\medskip
&\quad + x_1^{2}x_2^{11}x_3^{4}x_4^{5}x_5^{7}+
x_1^{2}x_2^{11}x_3^{5}x_4^{4}x_5^{7}+
x_1^{2}x_2^{13}x_3^{3}x_4^{4}x_5^{7}+
x_1^{2}x_2^{13}x_3^{4}x_4^{3}x_5^{7}+
x_1^{3}x_2^{3}x_3^{4}x_4^{6}x_5^{13}\\
\medskip
&\quad + x_1^{3}x_2^{3}x_3^{4}x_4^{12}x_5^{7}+
x_1^{3}x_2^{3}x_3^{5}x_4^{6}x_5^{12}+
x_1^{3}x_2^{3}x_3^{6}x_4^{5}x_5^{12}+
x_1^{3}x_2^{3}x_3^{12}x_4^{4}x_5^{7}+
x_1^{3}x_2^{4}x_3^{3}x_4^{6}x_5^{13}\\
\medskip
&\quad + x_1^{3}x_2^{4}x_3^{3}x_4^{12}x_5^{7}+
x_1^{3}x_2^{4}x_3^{5}x_4^{6}x_5^{11}+
x_1^{3}x_2^{4}x_3^{5}x_4^{10}x_5^{7}+
x_1^{3}x_2^{4}x_3^{6}x_4^{3}x_5^{13}+
x_1^{3}x_2^{4}x_3^{6}x_4^{9}x_5^{7}\\
\medskip
&\quad + x_1^{3}x_2^{4}x_3^{12}x_4^{3}x_5^{7}+
x_1^{3}x_2^{5}x_3^{2}x_4^{5}x_5^{14}+
x_1^{3}x_2^{5}x_3^{2}x_4^{6}x_5^{13}+
x_1^{3}x_2^{5}x_3^{3}x_4^{4}x_5^{14}+
x_1^{3}x_2^{5}x_3^{5}x_4^{6}x_5^{10}\\
\medskip
&\quad + x_1^{3}x_2^{5}x_3^{6}x_4^{2}x_5^{13}+
x_1^{5}x_2^{5}x_3^{5}x_4^{5}x_5^{9}+ Sq^1(h_1) + Sq^2(h_2) + Sq^4(h_4) \ \ {\rm mod}(P_5^-(\omega_{(5, 3)})),  \ \mbox{where}\\
\medskip
h_1 &= x_1^{3}x_2^{3}x_3^{5}x_4^{6}x_5^{11}+
x_1^{3}x_2^{3}x_3^{5}x_4^{10}x_5^{7}+
x_1^{3}x_2^{3}x_3^{9}x_4^{2}x_5^{11}+
x_1^{3}x_2^{3}x_3^{9}x_4^{3}x_5^{10}+
x_1^{3}x_2^{3}x_3^{9}x_4^{6}x_5^{7}\\
\medskip
&\quad +x_1^{3}x_2^{5}x_3^{2}x_4^{5}x_5^{13}+
x_1^{3}x_2^{5}x_3^{3}x_4^{4}x_5^{13}+
x_1^{3}x_2^{5}x_3^{4}x_4^{5}x_5^{11}+
x_1^{3}x_2^{5}x_3^{4}x_4^{9}x_5^{7}+
x_1^{3}x_2^{5}x_3^{9}x_4^{4}x_5^{7}\\
\medskip
&\quad + x_1^{3}x_2^{6}x_3^{5}x_4^{5}x_5^{9}+
x_1^{3}x_2^{9}x_3^{2}x_4^{3}x_5^{11}+
x_1^{3}x_2^{11}x_3^{3}x_4^{4}x_5^{7}+
x_1^{3}x_2^{11}x_3^{4}x_4^{3}x_5^{7}+
x_1^{10}x_2^{3}x_3^{3}x_4^{3}x_5^{9}\\
\medskip
&\quad + x_1^{10}x_2^{3}x_3^{3}x_4^{5}x_5^{7}+
x_1^{10}x_2^{3}x_3^{5}x_4^{3}x_5^{7}+
x_1^{10}x_2^{5}x_3^{3}x_4^{3}x_5^{7},\\
\medskip
h_2 &=  x_1^{2}x_2^{3}x_3^{5}x_4^{6}x_5^{11}+
x_1^{2}x_2^{3}x_3^{5}x_4^{10}x_5^{7}+
x_1^{2}x_2^{3}x_3^{9}x_4^{6}x_5^{7}+
x_1^{2}x_2^{11}x_3^{3}x_4^{4}x_5^{7}+
x_1^{2}x_2^{11}x_3^{4}x_4^{3}x_5^{7}\\
\medskip
&\quad + x_1^{3}x_2^{3}x_3^{4}x_4^{6}x_5^{11}+
x_1^{3}x_2^{3}x_3^{4}x_4^{10}x_5^{7}+
x_1^{3}x_2^{3}x_3^{6}x_4^{4}x_5^{11}+
x_1^{3}x_2^{3}x_3^{6}x_4^{8}x_5^{7}+
x_1^{3}x_2^{3}x_3^{8}x_4^{6}x_5^{7}\\
\medskip
&\quad + x_1^{3}x_2^{3}x_3^{10}x_4^{4}x_5^{7}+
x_1^{3}x_2^{4}x_3^{3}x_4^{6}x_5^{11}+
x_1^{3}x_2^{4}x_3^{3}x_4^{10}x_5^{7}+
x_1^{3}x_2^{4}x_3^{6}x_4^{3}x_5^{11}+
x_1^{3}x_2^{4}x_3^{10}x_4^{3}x_5^{7}\\
\medskip
&\quad + x_1^{3}x_2^{5}x_3^{5}x_4^{5}x_5^{9}+
x_1^{3}x_2^{6}x_3^{2}x_4^{5}x_5^{11}+
x_1^{3}x_2^{6}x_3^{2}x_4^{9}x_5^{7}+
x_1^{3}x_2^{6}x_3^{3}x_4^{4}x_5^{11}+
x_1^{3}x_2^{6}x_3^{3}x_4^{8}x_5^{7}\\
\medskip
&\quad + x_1^{3}x_2^{8}x_3^{3}x_4^{6}x_5^{7}+
x_1^{3}x_2^{8}x_3^{6}x_4^{3}x_5^{7}+
x_1^{3}x_2^{10}x_3^{2}x_4^{5}x_5^{7}+
x_1^{3}x_2^{10}x_3^{3}x_4^{4}x_5^{7}+
x_1^{5}x_2^{3}x_3^{6}x_4^{2}x_5^{11}\\
\medskip
&\quad +x_1^{5}x_2^{3}x_3^{6}x_4^{3}x_5^{10}+
x_1^{5}x_2^{3}x_3^{6}x_4^{6}x_5^{7}+
x_1^{5}x_2^{10}x_3^{2}x_4^{3}x_5^{7}+
x_1^{9}x_2^{3}x_3^{3}x_4^{3}x_5^{9}+
x_1^{9}x_2^{3}x_3^{3}x_4^{5}x_5^{7}\\
\medskip
&\quad + x_1^{9}x_2^{3}x_3^{5}x_4^{3}x_5^{7}+
x_1^{9}x_2^{5}x_3^{3}x_4^{3}x_5^{7}+
x_1^{9}x_2^{6}x_3^{2}x_4^{3}x_5^{7},\\
\medskip
h_4 &=  x_1^{3}x_2^{3}x_3^{6}x_4^{2}x_5^{11}+
x_1^{3}x_2^{3}x_3^{6}x_4^{3}x_5^{10}+
x_1^{3}x_2^{3}x_3^{6}x_4^{6}x_5^{7}+
x_1^{3}x_2^{4}x_3^{5}x_4^{6}x_5^{7}+
x_1^{3}x_2^{4}x_3^{6}x_4^{5}x_5^{7}\\
\medskip
&\quad + x_1^{3}x_2^{6}x_3^{5}x_4^{4}x_5^{7}+
x_1^{3}x_2^{10}x_3^{2}x_4^{3}x_5^{7}+
x_1^{5}x_2^{3}x_3^{4}x_4^{6}x_5^{7}+
x_1^{5}x_2^{3}x_3^{6}x_4^{4}x_5^{7}+
x_1^{5}x_2^{4}x_3^{3}x_4^{6}x_5^{7}\\
\medskip
&\quad +
x_1^{5}x_2^{4}x_3^{6}x_4^{3}x_5^{7}+
x_1^{5}x_2^{6}x_3^{2}x_4^{5}x_5^{7}+
x_1^{5}x_2^{6}x_3^{3}x_4^{4}x_5^{7}.
\end{array}$$
Now, let $g = x_1x_2x_3x_4x_5^2$ and $h = x_1x_2x_3x_4x_5.$ Then, we have
$$ u:= x_1^{5}x_2^{5}x_3^{5}x_4^{5}x_5^{9} = h.g^4 = hSq^{12}(g^2).$$
On the other hand, by the formula $\chi$-trick (see \cite{W.W2}), one gets
$$ u = g^2\chi(Sq^{12})(h) + \sum_{1\leqslant i\leqslant 12}Sq^i(g^2\chi(Sq^{12-i})(h)).$$
Since $\chi(Sq^{12}) = Sq^8Sq^4 + Sq^8Sq^3Sq^1,$ by a direct computation, we coclude that 
$$g^2\chi(Sq^{12})(h) \in P_5^-(\omega_{(5, 3)}).$$
This implies that $x$ is strictly inadmissible.  The lemma follows. 
\end{proof}

\begin{proof}[{\it Proof of Proposition \ref{md29}}]
Let $x$ be an admissible monomial  in $(P_5^+)_{29}$ such that  $\omega(x) = \omega_{(5, 3)}.$ Then $x = X_{\{i, j\}}y^2$ with $y\in \mathscr{B}_5(\omega_{(5, 2)}),$ and $1\leqslant  i<j \leqslant 5.$ 

Let $u\in \mathscr{B}_5(\omega_{(5, 2)})$ such that $X_{\{i, j\}}u^2\in P_5^+.$  By a direct computation using Proposition \ref{md13}, we see that if $X_{\{i, j\}}u^2\neq b_{29,\, k},\ \forall t,\ 1\leqslant k\leqslant 260,$ then there is a monomial $w$ which is given in one of Lemmas \ref{bd29-2}, \ref{bd29-3}, \ref{bd29-4} and \ref{bd29-5} such that $X_{\{i, j\}}u^2 = wu_1^{2^{\ell}}$ with suitable monomial $u_1\in P_5,$ and $\ell = {\rm max}\{r\in\mathbb{Z}\,\,:\,\,\omega_r(w) > 0\}.$  By Theorem \ref{dlKS}, $X_{\{i, j\}}u^2$ is inadmissible. Since $x = X_{\{i, j\}}y^2$ with $y\in \mathscr{B}_5(\omega_{(5, 2)}),$ and $x$ is admissible, one can see that $x = b_{29,\, k}.$ for some $k,\ 1\leqslant k\leqslant 260.$ This implies $\mathscr B_5^+(\omega_{(5, 3)})\subseteq \{b_{29,\, k}\, : \, 1\leqslant k\leqslant 260\}.$ 

Now, we prove the set $\{[b_{29,\, k}]\ :\ 1\leqslant k\leqslant 260\}$  is linearly independent in $(QP_5)_{29}.$ Suppose there is a linear relation 
$$ \mathcal S = \sum\limits_{1\leqslant k\leqslant 260}\gamma_kb_{29,\, k}\equiv_{\omega_{(5,3)}} 0,$$
where $\gamma_k\in\mathbb F_2$ for all $k,\ 1 \leqslant k \leqslant 260.$

From Theorem \ref{dlsig}, for $(i;I) \in \mathcal N_5$, we explicitly compute $p_{(i;I)}(\mathcal S)$ in terms of a given minimal set of $\mathscr A$-generators in $P_4\ ({\rm mod}(\mathscr A^+\cdot P_{4})).$ Based on the relations $p_{(i;I)}(\mathcal S) \equiv_{\omega_{(5,3)}} 0,\ \forall (i; I)\in\mathcal N_5,\ \ell(I) > 0,$ and some other simple computations, we get $\gamma_k = 0,\, k  =1, 2, \ldots, 260.$ The proof is completed. 
\end{proof}

Recall that $\dim(QP_5^0)_{29} = 195.$ Combining this with Theorem \ref{dldb} and Proposition \ref{md29}, we obtain $\dim (QP_5)_{29} = 645.$
\medskip

{\bf The case \boldmath\mbox{$t \geqslant 4$}.} We have the following.

\begin{propo}\label{mdtq}
There exist exactly $270$ admissible monomials in $P_5^+$ such that their weight vectors are $\omega_{(5, t)},$ for every $t\geqslant 4.$ Consequently $\dim(QP_5^+(\omega_{(5, t)})) = 270$ for $t\geqslant 4.$
\end{propo}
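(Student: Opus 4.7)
The strategy mirrors the proof of Proposition \ref{md29}, but carried out uniformly in $t \geq 4$ using the propagation principle of Theorem \ref{dlKS}(ii). Since $d = 3(2^t-1) + 2^t$ is odd and the constraint $\omega(x) = \omega_{(5,t)}$ forces $\omega_1(x) = 3$, every admissible $x \in (P_5^+)_d$ decomposes uniquely as $x = X_{\{i,j\}} y^2$ with $1 \leq i < j \leq 5$ and, by Theorem \ref{dlKS}(i), $y \in \mathscr{B}_5(\omega_{(5,t-1)})$. The first step is to list explicitly the $270$ candidate products $b_{d,k}$, $1 \leq k \leq 270$, parametrized uniformly in $t$ (to be tabulated in the appendix). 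The list will be identical in form to the $t=3$ enumeration of Proposition \ref{md29} but with $10$ additional monomials that become admissible once the squared factor is at least a fourth power, which accounts for the jump from $260$ to $270$.

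Next, one verifies that every other product $X_{\{i,j\}} y^2$ with $y \in \mathscr{B}_5(\omega_{(5,t-1)})$ is inadmissible. For each such product, the goal is to exhibit a factorization $X_{\{i,j\}} y^2 = w z^{2^r}$ with $w$ strictly inadmissible and $r = \max\{m : \omega_m(w) > 0\}$; inadmissibility then follows from Theorem \ref{dlKS}(ii). The necessary strictly inadmissible monomials are extensions of those collected in Lemmas \ref{bd29-2}--\ref{bd29-5}, proved by explicit Cartan-formula manipulations modulo $P_5^-(\omega_{(5,t)})$ together with the $\chi$-trick for a handful of borderline cases (as in Lemma \ref{bd29-5}). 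The decisive simplification is that Theorem \ref{dlKS}(ii) propagates strict inadmissibility across powers: once the exclusion list is established at a base value (say $t = 4$), it yields strictly inadmissible factors for every larger $t$ by simply enlarging the $z^{2^r}$ tail, which is precisely why the count $270$ is independent of $t$ for $t \geq 4$.

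Linear independence of $\{[b_{d,k}]_{\omega_{(5,t)}} : 1 \leq k \leq 270\}$ in $QP_5(\omega_{(5,t)})$ is then established by the standard transfer-to-$P_4$ technique. Assume a relation $\mathcal{S} = \sum_k \gamma_k b_{d,k} \equiv_{\omega_{(5,t)}} 0$. For each $(i;I) \in \mathcal{N}_5$ with $\ell(I) > 0$, apply the homomorphism $p_{(i;I)}: P_5 \to P_4$ of Definition \ref{dndc}; by Lemma \ref{bdPS}, the image lies in $P_4(\omega_{(5,t)})$, and Theorem \ref{dlsig} allows us to discard any monomial of weight strictly below $\omega_{(5,t)}$. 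Expanding $p_{(i;I)}(\mathcal{S})$ in the admissible basis of $(QP_4)_d$ from Sum \cite{N.S4} and equating coefficients produces a system of $\mathbb{F}_2$-linear constraints on the $\gamma_k$. Running this for a sufficient collection of $(i;I)$, parallel to the endgame of Proposition \ref{md29}, forces $\gamma_k = 0$ for every $k$.

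The principal obstacle is the combinatorial bookkeeping in the exclusion step: one must verify, uniformly in $t \geq 4$, that each of the $\binom{5}{2}|\mathscr{B}_5(\omega_{(5,t-1)})| - 270$ products outside the candidate list factorizes through a strictly inadmissible monomial, and one must check that no new admissible product appears as $t$ grows beyond $4$. The propagation principle of Theorem \ref{dlKS}(ii) is the key to reducing the infinite family of cases to a finite verification at a single base value of $t$; together with a finite list of strictly inadmissible monomials tailored to $\omega_{(5,t)}$, this is what produces the dimensional stabilization at $270$.
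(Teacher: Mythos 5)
Your overall strategy coincides with the paper's: decompose an admissible $x$ of weight $\omega_{(5,t)}$ as $X_{\{i,j\}}y^2$ with $y\in\mathscr{B}_5(\omega_{(5,t-1)})$, exclude all but $270$ products by exhibiting factorizations $wz^{2^r}$ through strictly inadmissible monomials $w$, and prove linear independence by pushing a putative relation down to $P_4$ via the homomorphisms $p_{(i;I)}$ and expanding in Sum's basis of $\mathscr{B}_4^+(3(2^t-1)+2^t)$. The enumeration of the $270$ candidates and the endgame of the independence argument are as in the paper.

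The gap is in your claimed uniformity mechanism. You assert that Theorem \ref{dlKS}(ii) lets you establish the exclusion list at the single base value $t=4$ and then cover every larger $t$ "by simply enlarging the $z^{2^r}$ tail." That is not how the propagation works: Theorem \ref{dlKS}(ii) produces $uy^{2^r}$ from a \emph{fixed} strictly inadmissible $u$ occupying the low-order binary digits, whereas the monomials that must be excluded at level $t$ --- and the strictly inadmissible witnesses required for them --- carry exponents such as $2^t-4$ and $2^{t+1}-9$ whose high-order digits grow with $t$. Concretely, the level-$t$ witness $x_1^{7}x_2^{9}x_3^{2^{t+1}-9}x_4^{2^t-6}x_5^{2^t-4}$ of Lemma \ref{Bdtq-4} is itself of full degree $3(2^t-1)+2^t$, and its $t=5$ instance is not of the form $w_4z^{2^r}$ with $w_4$ its $t=4$ instance (the exponent differences $x_2^{0}x_3^{32}x_4^{16}x_5^{16}$ do not form a $2^r$-th power for the relevant $r=5$). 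The paper instead (a) runs an induction on $t$, so the candidate products at level $t$ are controlled by the already-determined admissible list at level $t-1$, and (b) proves two $t$-parametrized families of strictly inadmissible monomials (Lemmas \ref{Bdtq-4} and \ref{Bdtq-5}) by Cartan-formula identities written once but valid uniformly for all $t\geqslant 4$, respectively $t\geqslant 5$. The very existence of a separate lemma needed only for $t\geqslant 5$, and the fact that the final two admissible monomials $b_{3(2^t-1)+2^t,\,269}$ and $b_{3(2^t-1)+2^t,\,270}$ are listed differently for $t=4$ and for $t\geqslant 5$, shows that a purely formal propagation from $t=4$ cannot close the argument; the stabilization at $270$ must be re-verified at $t=5$ before it persists.
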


We prove the proposition by showing that 
$$ \mathscr B_5^+(\omega_{(5, t)}) = \{b_{3(2^t-1)+2^t, k}:\ 1\leqslant k\leqslant 270\},$$ for all $t\geqslant 4.$ 
Here, the monomials $b_{3(2^t-1)+2^t, k},\ 1\leqslant k\leqslant 270$, are listed in Subsection \ref{dtcndtq-2}.

\begin{lema}\label{Bdtq-1}
If $(i, j, k, \ell, m)$ is a permutation of $(1, 2,3,4,5)$ such that $i<j < k<\ell$ then the following monomials are strictly inadmissible:
\medskip

\begin{center}
    \begin{tabular}{lrrr}
    $x_i^6x_jx_kx_{\ell}^6x_m^7,$ & \multicolumn{1}{l}{$x_ix_j^2x_k^6x_{\ell}^5x_m^7,$} & \multicolumn{1}{l}{$x_ix_j^6x_k^2x_{\ell}^5x_m^7,$} & \multicolumn{1}{l}{$x_i^6x_jx_k^2x_{\ell}^5x_m^7,$} \\
    $x_ix_j^6x_k^3x_{\ell}^4x_m^7,$ & \multicolumn{1}{l}{$x_i^3x_j^6x_kx_{\ell}^4x_m^7,$} & \multicolumn{1}{l}{$x_i^6x_jx_k^3x_{\ell}^4x_m^7,$} & \multicolumn{1}{l}{$x_i^6x_j^3x_kx_{\ell}^4x_m^7,$} \\
    $x_i^6x_jx_k^3x_{\ell}^5x_m^6,$ & \multicolumn{1}{l}{$x_i^6x_j^3x_k^5x_{\ell}x_m^6,$} & \multicolumn{1}{l}{$x_i^6x_j^3x_kx_{\ell}^5x_m^6,$} & \multicolumn{1}{l}{$x_i^2x_j^2x_k^5x_{\ell}^5x_m^7,$} \\
    $x_i^2x_j^5x_k^2x_{\ell}^5x_m^7,$ & \multicolumn{1}{l}{$x_i^2x_j^3x_k^4x_{\ell}^5x_m^7,$} & \multicolumn{1}{l}{$x_i^2x_j^3x_k^5x_{\ell}^4x_m^7,$} & \multicolumn{1}{l}{$x_i^2x_j^4x_k^3x_{\ell}^5x_m^7,$} \\
    $x_i^2x_j^5x_k^3x_{\ell}^4x_m^7,$ & \multicolumn{1}{l}{$x_i^3x_j^2x_k^4x_{\ell}^5x_m^7,$} & \multicolumn{1}{l}{$x_i^3x_j^2x_k^5x_{\ell}^4x_m^7,$} & \multicolumn{1}{l}{$x_i^3x_j^4x_k^2x_{\ell}^5x_m^7,$} \\
    $x_i^2x_j^5x_k^3x_{\ell}^5x_m^6,$ & \multicolumn{1}{l}{$x_i^3x_j^2x_k^5x_{\ell}^5x_m^6,$} & \multicolumn{1}{l}{$x_i^3x_j^4x_k^3x_{\ell}^4x_m^7,$} & \multicolumn{1}{l}{$x_ix_j^7x_k^{10}x_{\ell}^{12}x_m^{15},$} \\
    $x_i^7x_jx_k^{10}x_{\ell}^{12}x_m^{15},$ & \multicolumn{1}{l}{$x_i^3x_j^3x_k^{12}x_{\ell}^{12}x_m^{15},$} & \multicolumn{1}{l}{$x_i^3x_j^5x_k^8x_{\ell}^{14}x_m^{15},$} & \multicolumn{1}{l}{$x_i^3x_j^5x_k^{14}x_{\ell}^8x_m^{15},$} \\
    $x_i^7x_j^7x_k^8x_{\ell}^8x_m^{15}.$ &       &       &  \\
    \end{tabular}%
\end{center}
  \end{lema}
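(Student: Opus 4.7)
The plan is to verify strict inadmissibility for each of the listed monomials by producing, for each such $x$, an explicit identity of the form
$$ x \equiv \sum_{t} y_t \pmod{P_5^-(\omega(x))},$$
where each $y_t < x$ in the order of Definition \ref{dnqhtt} and the left-over difference lies in $\sum_{1 \leq j \leq 2^r-1} Sq^j(P_5)$, with $r = \max\{i : \omega_i(x) > 0\}$. This is exactly the scheme carried out in Lemmas \ref{bd13-3}, \ref{bd29-3}, \ref{bd29-4}, and \ref{bd29-5}, so I would imitate those computations.

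First I would partition the list by degree and weight vector. The 23 monomials from $x_i^6 x_j x_k x_\ell^6 x_m^7$ through $x_i^3 x_j^4 x_k^3 x_\ell^4 x_m^7$ all have degree $21$ and weight vector $(3,3,3)$, so $r = 3$ and only $Sq^1, Sq^2, Sq^4$ need enter the decomposition. The remaining six entries, namely $x_i x_j^7 x_k^{10} x_\ell^{12} x_m^{15}$, $x_i^7 x_j x_k^{10} x_\ell^{12} x_m^{15}$, $x_i^3 x_j^3 x_k^{12} x_\ell^{12} x_m^{15}$, $x_i^3 x_j^5 x_k^8 x_\ell^{14} x_m^{15}$, $x_i^3 x_j^5 x_k^{14} x_\ell^8 x_m^{15}$, and $x_i^7 x_j^7 x_k^8 x_\ell^8 x_m^{15}$, have degree $45$ and weight vector $(3,3,3,3)$, so $r = 4$ and Steenrod squares up to $Sq^8$ may appear. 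Within each group several entries are related by a transposition of two exponents at adjacent positions or by a relabeling that respects the condition $i < j < k < \ell$, so a single model computation in each orbit will cover several listed monomials after a permutation of variables.

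For the degree-$21$ monomials I would mimic the Cartan-formula expansions already used for Lemma \ref{bd29-4}: for a model such as $x_i^6 x_j x_k x_\ell^6 x_m^7$ one starts from terms like $Sq^1(x_i^5 x_j x_k x_\ell^6 x_m^7 + \cdots)$, $Sq^2(x_i^5 x_j x_k x_\ell^5 x_m^7 + \cdots)$, $Sq^4(\cdots)$, verifies by the Cartan formula that the resulting sum equals $x$ modulo smaller monomials, and then checks smallness via the exponent-vector order of Definition \ref{dnqhtt}. For the degree-$45$ monomials, and especially for $x_i^7 x_j^7 x_k^8 x_\ell^8 x_m^{15}$, a direct Cartan expansion becomes unwieldy, so I would instead isolate a factor of the form $v^{2^k}$ (for instance $x_m^{15} = x_m \cdot (x_m^7)^2$, or an $x_\ell^8 = (x_\ell^4)^2$) and apply the $\chi$-trick
$$u \cdot v^{2^k} = v^{2^k} \chi(Sq^{2^k})(u) + \sum_{1 \leq i \leq 2^k} Sq^i\bigl(v^{2^k}\chi(Sq^{2^k - i})(u)\bigr),$$
exactly as in the proof of Lemma \ref{bd29-5}, to push the high Steenrod squares across the factorization and land in $P_5^-(\omega)$.

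The main obstacle is not conceptual but combinatorial: each Cartan expansion produces many summands and every one of them must be classified as either strictly smaller than $x$ in the $(\omega, \sigma)$-order or as genuinely $\omega$-hit, i.e.\ lying in $P_5^-(\omega) + (\mathscr A^{+}\cdot P_5 \cap P_5(\omega))$. Choosing the Steenrod-square preimages $h_j$ so that their Cartan expansions cancel the unwanted $P_5(\omega)$-contributions is the delicate step, most noticeably for the mixed-parity monomials $x_i^3 x_j^5 x_k^8 x_\ell^{14} x_m^{15}$ and $x_i^3 x_j^5 x_k^{14} x_\ell^8 x_m^{15}$. Once each decomposition has been exhibited and the smallness of every surviving term has been checked, strict inadmissibility follows directly from Definition \ref{dniad}.
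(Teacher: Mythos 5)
Your plan is correct and follows essentially the same route as the paper: the paper's proof writes out a single model case, $x_i^7x_j^7x_k^8x_{\ell}^8x_m^{15}$, as an explicit Cartan-formula decomposition into strictly smaller monomials plus $Sq^1,Sq^2,Sq^4,Sq^8$ terms modulo $P_5^-(\omega)$, and declares the remaining monomials analogous. Your partition by degree and weight vector and your appeal to the $\chi$-trick for the hardest degree-$45$ case are consistent with the toolkit the paper uses in the neighbouring Lemmas \ref{bd29-4} and \ref{bd29-5}, so there is no substantive difference in method.
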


\begin{proof}

We prove the lemma for the monomial $x = x_i^7x_j^7x_k^8x_{\ell}^8x_m^{15}.$ The others can be proved by a similar computation. 

Applying the Cartan formula, we have
$$ \begin{array}{ll}
\medskip
x &= x_i^7x_j^5x_k^{10}x_{\ell}^8x_m^{15} + x_i^5x_j^{11}x_k^6x_{\ell}^8x_m^{15} + x_i^5x_j^{7}x_k^{10}x_{\ell}^8x_m^{15}\\
\medskip
&\quad + x_i^5x_j^{11}x_k^{10}x_{\ell}^4x_m^{15} + x_i^5x_j^{7}x_k^{10}x_{\ell}^8x_m^{15} + x_i^5x_j^{10}x_k^{11}x_{\ell}^4x_m^{15}\\
\medskip
&\quad + x_i^5x_j^{6}x_k^{11}x_{\ell}^8x_m^{15} + x_i^5x_j^{3}x_k^{14}x_{\ell}^8x_m^{15} + x_i^4x_j^{11}x_k^{11}x_{\ell}^4x_m^{15}\\
\medskip
&\quad + x_i^4x_j^{7}x_k^{11}x_{\ell}^8x_m^{15} + Sq^1(x_i^7x_j^{7}x_k^{11}x_{\ell}^4x_m^{15} + x_i^7x_j^{5}x_k^{13}x_{\ell}^4x_m^{15})\\
\medskip
&\quad + Sq^2(y) + Sq^4(z) + Sq^8(x_i^7x_j^{5}x_k^6x_{\ell}^4x_m^{15}) \ \ {\rm mod}(P_5^-(\omega^*_{(5, 4)})),  \ \mbox{where}\\
\medskip
y &= x_i^7x_j^7x_k^6x_{\ell}^8x_m^{15} + x_i^7x_j^3x_k^6x_{\ell}^4x_m^{23} + x_i^7x_j^7x_k^{10}x_{\ell}^4x_m^{15}\ + x_i^7x_j^6x_k^{11}x_{\ell}^4x_m^{15} + x_i^7x_j^3x_k^{14}x_{\ell}^4x_m^{15},\\
\medskip
z&= x_i^5x_j^7x_k^6x_{\ell}^8x_m^{15} + x_i^{11}x_j^5x_k^6x_{\ell}^4x_m^{15} + x_i^5x_j^7x_k^{10}x_{\ell}^4x_m^{15}\\
&\quad + x_i^4x_j^7x_k^{11}x_{\ell}^4x_m^{15} + x_i^5x_j^6x_k^{11}x_{\ell}^4x_m^{15} + x_i^5x_j^3x_k^{14}x_{\ell}^4x_m^{15}.
\end{array}$$
The above equalities imply that $x$ is strictly inadmissible.
\end{proof} 

The following lemma is proved by a direct computation.

\begin{lema}\label{Bdtq-2}
The following monomials are strictly inadmissible:
\begin{enumerate}
\item[(i)]  $x_i^6x_j^6x_kx_lx_m^7, \ x_i^2x_jx_k^4x_l^7x_m^7, \ x_i^2x_j^4x_kx_l^7x_m^7, \ x_i^2x_j^5x_kx_l^6x_m^7, \ x_i^2x_jx_k^5x_l^6x_m^7, \ x_i^3x_j^4x_kx_l^6x_m^7$, whenever $(i, j, k, l, m)$ is a permutation of $(1, 2, 3, 4, 5)$ such that $i<j<k.$

\medskip
\item[(ii)] 
    \begin{tabular}[t]{lrrr}
    $ x_1^{3}x_2^{3}x_3^{6}x_4^{4}x_5^{5}$, & \multicolumn{1}{l}{$ x_1^{3}x_2^{3}x_3^{6}x_4^{5}x_5^{4}$,} & \multicolumn{1}{l}{$ x_1^{3}x_2^{4}x_3^{3}x_4^{6}x_5^{5}$,} & \multicolumn{1}{l}{$ x_1^{3}x_2^{4}x_3^{6}x_4^{3}x_5^{5}$,} \\
    $ x_1^{3}x_2^{6}x_3^{3}x_4^{4}x_5^{5}$, & \multicolumn{1}{l}{$ x_1^{3}x_2^{6}x_3^{3}x_4^{5}x_5^{4}$,} & \multicolumn{1}{l}{$ x_1^{3}x_2^{6}x_3^{4}x_4^{3}x_5^{5}$,} & \multicolumn{1}{l}{$ x_1^{3}x_2^{6}x_3^{5}x_4^{3}x_5^{4}$,} \\
    $ x_1^{6}x_2^{3}x_3^{3}x_4^{4}x_5^{5}$, & \multicolumn{1}{l}{$ x_1^{6}x_2^{3}x_3^{3}x_4^{5}x_5^{4}$,} & \multicolumn{1}{l}{$ x_1^{6}x_2^{3}x_3^{4}x_4^{3}x_5^{5}$,} & \multicolumn{1}{l}{$ x_1^{6}x_2^{3}x_3^{5}x_4^{3}x_5^{4}$,} \\
    $ x_1^{3}x_2^{5}x_3^{2}x_4^{6}x_5^{5}$, & \multicolumn{1}{l}{$ x_1^{3}x_2^{5}x_3^{6}x_4^{2}x_5^{5}$,} & \multicolumn{1}{l}{$ x_1^{3}x_2^{6}x_3^{5}x_4^{2}x_5^{5}$,} & \multicolumn{1}{l}{$ x_1^{6}x_2^{3}x_3^{5}x_4^{2}x_5^{5}$,} \\
    $ x_1^{3}x_2^{3}x_3^{4}x_4^{6}x_5^{5}$, &   $ x_1^{3}x_2^{5}x_3^{9}x_4^{14}x_5^{14}$,     &    $ x_1^{3}x_2^{5}x_3^{14}x_4^{9}x_5^{14}.$   &  
    \end{tabular}%
   \end{enumerate}
\end{lema}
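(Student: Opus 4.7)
The plan is to prove strict inadmissibility for each listed monomial $x$ by producing an explicit identity
\begin{equation*}
x = \sum_{t} y_t + \sum_{1 \leqslant j \leqslant 2^r - 1} Sq^j(h_j) \pmod{P_5^-(\omega(x))},
\end{equation*}
where $r = \max\{i : \omega_i(x) > 0\}$, each $y_t < x$ in the order of Definition~\ref{dnqhtt}, and $h_j \in P_5$ are suitably chosen polynomials. First I would verify that every monomial listed in (i) and in (ii) has weight vector $\omega_{(5,3)}^* = (3,3,3)$: reading off the binary expansions of the exponent tuples (e.g.\ for $x_i^2 x_j x_k^4 x_\ell^7 x_m^7$ one gets $\omega = (3,3,3)$, and similarly for each of the other patterns) fixes $r = 3$, so the admissible squares are precisely $Sq^1, \ldots, Sq^7$.

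For each individual monomial I would construct the decomposition in the same style as in the proof of Lemma~\ref{bd29-5}. Concretely, one peels off a factor $x_i^a x_j^b$ with small $a, b$, writes the remaining quartic piece as $Sq^j$ applied to a degree-$(d-j)$ polynomial using the Cartan formula, and collects error terms. In the recurring nuisance case where a variable carries a pure power of $2$ that cannot be absorbed by a single $Sq^{2^a}$, I would apply the $\chi$-trick $g^2 \chi(Sq^n)(h) \equiv g^2 \cdot Sq^n(h) + \sum_{i < n} Sq^i(g^2 \chi(Sq^{n-i})(h)) \pmod{\text{hits}}$, expanding $\chi(Sq^n)$ by the antipode recursion, exactly as was done for $x_1^5 x_2^5 x_3^5 x_4^5 x_5^9$. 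For the permutation-indexed families in (i), the computation has to be performed once per symmetry class: although the monomials are identical up to relabelling, the order on monomials depends on the index, so each of the six patterns in (i) yields its own set of hit-relations to write down.

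The real obstacle is purely combinatorial bookkeeping. Each Cartan expansion generates many summands, and for every one I would need to check (a) that its weight vector is either strictly less than $(3,3,3)$ (so it is absorbed into the $P_5^-(\omega)$-error) or equal to $(3,3,3)$ with $\sigma$-vector lexicographically smaller than $\sigma(x)$, and (b) that the squares used have index at most $2^r - 1 = 7$. With roughly fifty monomials in total and each requiring several nontrivial Cartan summands, the safest approach is to tabulate, for each $x$, the full expansion in columns (head monomial, its $\omega$, its $\sigma$, its ordering relation to $x$). The argument is conceptually unremarkable but must be carried out line-by-line, which is why the paper simply records the statement; the entire verification can be automated or done by hand, following the template established in Lemmas~\ref{bd13-2}--\ref{bd29-5}.
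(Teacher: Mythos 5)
Your overall strategy is exactly what the paper intends: the paper offers no written proof of this lemma beyond the phrase ``proved by a direct computation,'' and the template you describe (explicit decomposition modulo $P_5^-(\omega(x))$ via the Cartan formula, with the $\chi$-trick for stray powers of $2$, checking each error term against the order of Definition~\ref{dnqhtt}) is the same one used for Lemmas~\ref{bd13-3}, \ref{bd29-5} and \ref{Bdtq-1}.

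However, your preliminary verification contains a concrete error. The last two monomials of part (ii), namely $x_1^{3}x_2^{5}x_3^{9}x_4^{14}x_5^{14}$ and $x_1^{3}x_2^{5}x_3^{14}x_4^{9}x_5^{14}$, have degree $45$, not $21$, and their weight vector is $(3,3,3,3)$, not $(3,3,3)$ (the exponent $9=1001_2$ and the two exponents $14=1110_2$ contribute a nonzero $\omega_4$). Hence for these two monomials $r=4$, the admissible squares run up to $Sq^{15}$ (in particular $Sq^{8}$ is available and, as in the analogous degree-$45$ computations of Lemma~\ref{Bdtq-1}, will generally be needed), and the reduction must be carried out modulo $P_5^{-}((3,3,3,3))$ rather than $P_5^{-}((3,3,3))$. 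As written, your claim that every listed monomial has $r=3$ would make the verification fail for these two entries: working modulo the wrong $P_5^-(\omega)$ discards the wrong set of terms. The remaining seventeen monomials of (ii) and all six patterns of (i) do have degree $21$ and weight vector $(3,3,3)$, so for those your setup is correct. Once this is fixed, the rest is the line-by-line bookkeeping you describe.
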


\begin{lema}\label{Bdtq-3}
The following monomials are strictly inadmissible:
\medskip

\begin{enumerate}
\item[(i)]  $\rho_i(f),\ 1\leqslant i\leqslant 5,$  where $f$ is one of the following monomials:
\medskip

    \begin{tabular}{llrr}
    $ x_1^{2}x_2^{5}x_3^{7}x_4^{7}$, & $ x_1^{2}x_2^{7}x_3^{5}x_4^{7}$, & \multicolumn{1}{l}{$ x_1^{2}x_2^{7}x_3^{7}x_4^{5}$,} & \multicolumn{1}{l}{$ x_1^{7}x_2^{2}x_3^{5}x_4^{7}$,} \\
    $ x_1^{7}x_2^{2}x_3^{7}x_4^{5}$, & $ x_1^{7}x_2^{7}x_3^{2}x_4^{5}$, & \multicolumn{1}{l}{$ x_1^{3}x_2^{4}x_3^{7}x_4^{7}$,} & \multicolumn{1}{l}{$ x_1^{3}x_2^{7}x_3^{4}x_4^{7}$,} \\
    $ x_1^{3}x_2^{7}x_3^{7}x_4^{4}$, & $ x_1^{7}x_2^{3}x_3^{4}x_4^{7}$, & \multicolumn{1}{l}{$ x_1^{7}x_2^{3}x_3^{7}x_4^{4}$,} & \multicolumn{1}{l}{$ x_1^{7}x_2^{7}x_3^{3}x_4^{4}$,} \\
    $ x_1^{3}x_2^{6}x_3^{5}x_4^{7}$, & $ x_1^{3}x_2^{6}x_3^{7}x_4^{5}$, & \multicolumn{1}{l}{$ x_1^{3}x_2^{7}x_3^{6}x_4^{5}$,} & \multicolumn{1}{l}{$ x_1^{6}x_2^{3}x_3^{5}x_4^{7}$,} \\
    $ x_1^{6}x_2^{3}x_3^{7}x_4^{5}$, & $ x_1^{6}x_2^{7}x_3^{3}x_4^{5}$, & \multicolumn{1}{l}{$ x_1^{7}x_2^{3}x_3^{6}x_4^{5}$,} & \multicolumn{1}{l}{$ x_1^{7}x_2^{6}x_3^{3}x_4^{5}$,} \\
    $ x_1^{6}x_2x_3^{7}x_4^{7}$, & $ x_1^{6}x_2^{7}x_3x_4^{7}$, & \multicolumn{1}{l}{$ x_1^{6}x_2^{7}x_3^{7}x_4$,} & \multicolumn{1}{l}{$ x_1^{7}x_2^{6}x_3x_4^{7}$,} \\
    $ x_1^{7}x_2^{6}x_3^{7}x_4$, & $ x_1^{7}x_2^{7}x_3^{6}x_4$.&       &  \\
    \end{tabular}%

\medskip
\item[(ii)]  
   \begin{tabular}[t]{llrr}
    $ x_1x_2^{15}x_3^{15}x_4^{18}x_5^{12}$, & $ x_1^{15}x_2x_3^{15}x_4^{18}x_5^{12}$, & \multicolumn{1}{l}{$ x_1^{15}x_2^{15}x_3x_4^{18}x_5^{12}$,} & \multicolumn{1}{l}{$ x_1x_2^{14}x_3^{15}x_4^{15}x_5^{16}$,} \\
    $ x_1x_2^{15}x_3^{14}x_4^{15}x_5^{16}$, & $ x_1^{15}x_2x_3^{14}x_4^{15}x_5^{16}$, & \multicolumn{1}{l}{$ x_1x_2^{15}x_3^{15}x_4^{14}x_5^{16}$,} & \multicolumn{1}{l}{$ x_1^{15}x_2x_3^{15}x_4^{14}x_5^{16}$,} \\
    $ x_1^{15}x_2^{15}x_3x_4^{14}x_5^{16}$, & $ x_1^{15}x_2^{15}x_3^{17}x_4^{2}x_5^{12}$, & \multicolumn{1}{l}{$ x_1^{3}x_2^{13}x_3^{14}x_4^{19}x_5^{12}$,} & \multicolumn{1}{l}{$ x_1^{3}x_2^{13}x_3^{15}x_4^{18}x_5^{12}$,} \\
    $ x_1^{3}x_2^{15}x_3^{13}x_4^{18}x_5^{12}$, & $ x_1^{15}x_2^{3}x_3^{13}x_4^{18}x_5^{12}$, & \multicolumn{1}{l}{$ x_1^{3}x_2^{13}x_3^{14}x_4^{15}x_5^{16}$,} & \multicolumn{1}{l}{$ x_1^{3}x_2^{13}x_3^{15}x_4^{14}x_5^{16}$,} \\
    $ x_1^{3}x_2^{15}x_3^{13}x_4^{14}x_5^{16}$, & $ x_1^{15}x_2^{3}x_3^{13}x_4^{14}x_5^{16}$, & \multicolumn{1}{l}{$ x_1^{7}x_2^{7}x_3^{8}x_4^{9}x_5^{30}$,} & \multicolumn{1}{l}{$ x_1^{7}x_2^{7}x_3^{9}x_4^{8}x_5^{30}$,} \\
    $ x_1^{7}x_2^{7}x_3^{9}x_4^{30}x_5^{8}$, & $ x_1^{7}x_2^{7}x_3^{9}x_4^{14}x_5^{24}$, & \multicolumn{1}{l}{$ x_1^{7}x_2^{11}x_3^{13}x_4^{16}x_5^{14}$,} & \multicolumn{1}{l}{$ x_1^{7}x_2^{15}x_3^{17}x_4^{10}x_5^{12}$,} \\
    $ x_1^{15}x_2^{7}x_3^{17}x_4^{10}x_5^{12}$, & $ x_1^{15}x_2^{19}x_3^{5}x_4^{10}x_5^{12}$. &       &  \\
    \end{tabular}%
  \end{enumerate}
\end{lema}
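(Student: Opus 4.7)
The plan is to treat the two parts separately. For part (i), the listed polynomials $f$ live in $P_4$, and the strategy is to reduce the claim about $\rho_i(f) \in P_5$ to a statement purely in $P_4$. Since $\rho_i : P_4 \to P_5$ is an $\mathscr{A}$-monomorphism that preserves both the weight vector and the order $<$ on monomials of a fixed degree, any strict-inadmissibility relation
\[
f = \sum_{t} y_t + \sum_{1 \leq j \leq 2^r-1} Sq^j(h_j), \qquad y_t < f,\ r = \max\{i : \omega_i(f) > 0\},
\]
valid in $P_4$ pushes forward verbatim, under $\rho_i$, to an analogous relation for $\rho_i(f)$ in $P_5$ (with the same $r$ since $\omega(\rho_i(f)) = \omega(f)$). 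So the task reduces to checking that each of the listed $f$ is strictly inadmissible in $P_4$, which is contained in Sum's classification \cite{N.S4} of admissible monomials in four variables. This is exactly the mechanism announced in Lemma \ref{bd29-2} (stated there as a corollary of \cite{N.S4}), and the current list extends the earlier one by a few higher-degree entries.

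For part (ii), the approach is direct Cartan-formula computation in $P_5$. For each monomial $x$ in the list, one exhibits an explicit identity
\[
x \equiv \sum_{t} y_t + \sum_{1 \leq j \leq 2^r-1} Sq^j(h_j) \pmod{P_5^{-}(\omega(x))},
\]
with $y_t < x$, $r = \max\{i : \omega_i(x) > 0\}$, and $h_j \in P_5$ chosen so that the Cartan expansion of $Sq^j(h_j)$ contributes $x$ exactly once modulo smaller weight vectors. This is the template already followed in Lemmas \ref{bd13-3}, \ref{bd29-3}, \ref{bd29-4}, \ref{bd29-5}, and \ref{Bdtq-1}: a single representative is worked out in full detail, and the remaining cases are handled by analogous computations, often obtained by permuting indices or by mildly modifying the $h_j$.

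The main obstacle is a small group of exceptional monomials for which the straightforward choice of $h_j$ fails because a large power $2^a$ factor appears in one or more variables, e.g.\ $x_1^{15}x_2^{15}x_3^{17}x_4^2x_5^{12}$ and $x_1^7x_2^7x_3^9x_4^{14}x_5^{24}$. For these, the remedy is the $\chi$-trick already deployed in the proof of Lemma \ref{bd29-5} for $x_1^5x_2^5x_3^5x_4^5x_5^9$: factor $x$ as $h \cdot g^{2^a}$ with a convenient $g$, rewrite $g^{2^a}$ as a Steenrod square acting on $g^{2^{a-1}}$ times a lower-degree correction, and then use the anti-automorphism $\chi$ of $\mathscr{A}$ together with the formula $u \cdot Sq^n(v) = v \cdot \chi(Sq^n)(u) + \sum_{i \geq 1} Sq^i(v \cdot \chi(Sq^{n-i})(u))$ to transfer the operation onto $h$ and absorb the residue into $P_5^{-}(\omega(x))$. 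After this step, the verification is routine but lengthy bookkeeping of Adem-type expansions, which can be carried out one monomial at a time in parallel with the worked example in Lemma \ref{bd29-5}.
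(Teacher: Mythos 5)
Your proposal is correct and follows essentially the same route as the paper: part (i) is treated there as an immediate corollary of Sum's four-variable results in \cite{N.S4} (pushed forward along $\rho_i$, exactly the mechanism you describe), and part (ii) is proved by direct Cartan-formula computations, with two representatives ($x_1x_2^{14}x_3^{15}x_4^{15}x_5^{16}$ and $x_1^{7}x_2^{7}x_3^{9}x_4^{8}x_5^{30}$) worked out in full via explicit $Sq^1, Sq^2, Sq^4, Sq^8$ terms modulo $P_5^{-}(\omega_{(5,4)})$ and the rest left to similar calculations. The only difference is one of anticipated execution: the paper's worked cases in this lemma never need the $\chi$-trick, which it reserves for the single term $x_1^{5}x_2^{5}x_3^{5}x_4^{5}x_5^{9}$ in the proof of Lemma \ref{bd29-5}.
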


\begin{proof}
The first part of the lemma is an immediate corollary from a result in \cite{N.S4}.
We prove the second part of this lemma for the monomials $x = x_1x_2^{14}x_3^{15}x_4^{15}x_5^{16}$ and $y = x_1^{7}x_2^{7}x_3^{9}x_4^{8}x_5^{30}.$ The others can be proved by a similar computation. By a direct computation, one gets
$$ \begin{array}{ll}
x &=  x_1x_2^{10}x_3^{12}x_4^{15}x_5^{23}+
 x_1x_2^{10}x_3^{12}x_4^{23}x_5^{15}+
 x_1x_2^{10}x_3^{14}x_4^{13}x_5^{23}+
 x_1x_2^{10}x_3^{14}x_4^{21}x_5^{15}\\
\medskip
 &\quad + x_1x_2^{10}x_3^{20}x_4^{15}x_5^{15}+
 x_1x_2^{10}x_3^{22}x_4^{13}x_5^{15}+
 x_1x_2^{12}x_3^{15}x_4^{10}x_5^{23}+
 x_1x_2^{12}x_3^{15}x_4^{15}x_5^{18}\\
\medskip
 &\quad + x_1x_2^{12}x_3^{18}x_4^{15}x_5^{15}+
 x_1x_2^{12}x_3^{26}x_4^{7}x_5^{15}+
 x_1x_2^{14}x_3^{12}x_4^{15}x_5^{19}+
 x_1x_2^{14}x_3^{12}x_4^{19}x_5^{15}\\
\medskip
 &\quad + x_1x_2^{14}x_3^{14}x_4^{13}x_5^{19}+
 x_1x_2^{14}x_3^{14}x_4^{17}x_5^{15}+
 x_1x_2^{14}x_3^{15}x_4^{8}x_5^{23}\\
\medskip
&\quad + Sq^1(f_1)  + Sq^2(f_2) + Sq^4(f_4)  + Sq^8(f_8) \ \ {\rm mod}(P_5^-(\omega_{(5, 4)})),  \ \mbox{where}\\
\medskip

f_1&=  x_1^{2}x_2^{11}x_3^{15}x_4^{15}x_5^{17}+
 x_1^{2}x_2^{11}x_3^{15}x_4^{17}x_5^{15}+
 x_1^{2}x_2^{11}x_3^{17}x_4^{15}x_5^{15}+
 x_1^{2}x_2^{11}x_3^{23}x_4^{7}x_5^{17}\\
\medskip
  &\quad + x_1^{2}x_2^{11}x_3^{23}x_4^{9}x_5^{15}+
 x_1^{2}x_2^{11}x_3^{25}x_4^{7}x_5^{15}+
 x_1^{2}x_2^{13}x_3^{15}x_4^{15}x_5^{15}+
 x_1^{2}x_2^{13}x_3^{23}x_4^{7}x_5^{15},\\
\medskip
f_2&=  x_1x_2^{11}x_3^{15}x_4^{15}x_5^{17}+
 x_1x_2^{11}x_3^{15}x_4^{17}x_5^{15}+
 x_1x_2^{11}x_3^{17}x_4^{15}x_5^{15}+
 x_1x_2^{11}x_3^{23}x_4^{7}x_5^{17}\\
\medskip
 &\quad + x_1x_2^{11}x_3^{23}x_4^{9}x_5^{15}+
 x_1x_2^{11}x_3^{25}x_4^{7}x_5^{15}+
 x_1x_2^{13}x_3^{15}x_4^{15}x_5^{15}+
 x_1x_2^{13}x_3^{23}x_4^{7}x_5^{15}\\
\medskip
  &\quad + x_1x_2^{14}x_3^{18}x_4^{11}x_5^{15}+
 x_1x_2^{22}x_3^{15}x_4^{6}x_5^{15},\\
\medskip
f_4&=   x_1x_2^{14}x_3^{12}x_4^{15}x_5^{15}+
 x_1x_2^{14}x_3^{14}x_4^{13}x_5^{15}+
 x_1x_2^{14}x_3^{20}x_4^{7}x_5^{15}+
 x_1x_2^{20}x_3^{15}x_4^{6}x_5^{15},\\
\medskip
f_8&=  x_1x_2^{10}x_3^{12}x_4^{15}x_5^{15}+
 x_1x_2^{10}x_3^{14}x_4^{13}x_5^{15}+
 x_1x_2^{12}x_3^{15}x_4^{10}x_5^{15}+
 x_1x_2^{14}x_3^{15}x_4^{8}x_5^{15}.
\end{array}$$
The above relations show that $x$ is strictly inadmissible. By a similar computation, we obtain
$$ \begin{array}{ll}
y &=  x_1^{4}x_2^{11}x_3^{12}x_4^{19}x_5^{15}+
x_1^{4}x_2^{11}x_3^{19}x_4^{12}x_5^{15}+
 x_1^{4}x_2^{20}x_3^{11}x_4^{11}x_5^{15}+
 x_1^{4}x_2^{24}x_3^{7}x_4^{11}x_5^{15}\\
\medskip
 &\quad + x_1^{5}x_2^{3}x_3^{12}x_4^{26}x_5^{15}+
 x_1^{5}x_2^{3}x_3^{26}x_4^{12}x_5^{15}+
 x_1^{5}x_2^{7}x_3^{8}x_4^{14}x_5^{27}+
 x_1^{5}x_2^{7}x_3^{8}x_4^{26}x_5^{15}\\
\medskip
&\quad + x_1^{5}x_2^{7}x_3^{10}x_4^{12}x_5^{27}+
 x_1^{5}x_2^{7}x_3^{10}x_4^{24}x_5^{15}+
 x_1^{5}x_2^{10}x_3^{12}x_4^{19}x_5^{15}+
 x_1^{5}x_2^{10}x_3^{19}x_4^{12}x_5^{15}\\
\medskip
 &\quad +x_1^{5}x_2^{11}x_3^{5}x_4^{10}x_5^{30}+
 x_1^{5}x_2^{11}x_3^{6}x_4^{12}x_5^{27}+
 x_1^{5}x_2^{11}x_3^{8}x_4^{14}x_5^{23}+
 x_1^{5}x_2^{11}x_3^{8}x_4^{22}x_5^{15}\\
\medskip
 &\quad + x_1^{5}x_2^{11}x_3^{9}x_4^{6}x_5^{30}+
 x_1^{5}x_2^{11}x_3^{10}x_4^{20}x_5^{15}+
 x_1^{5}x_2^{11}x_3^{16}x_4^{14}x_5^{15}+
 x_1^{5}x_2^{11}x_3^{18}x_4^{12}x_5^{15}\\
\medskip
 &\quad + x_1^{5}x_2^{18}x_3^{11}x_4^{12}x_5^{15}+
 x_1^{7}x_2^{3}x_3^{12}x_4^{24}x_5^{15}+
 x_1^{7}x_2^{3}x_3^{24}x_4^{12}x_5^{15}+
 x_1^{7}x_2^{5}x_3^{9}x_4^{10}x_5^{30}\\
\medskip
 &\quad + x_1^{7}x_2^{5}x_3^{10}x_4^{12}x_5^{27}+
 x_1^{7}x_2^{7}x_3^{8}x_4^{12}x_5^{27}+
 x_1^{7}x_2^{7}x_3^{8}x_4^{14}x_5^{25}+
 x_1^{7}x_2^{7}x_3^{8}x_4^{24}x_5^{15} \\
\medskip
 &\quad + Sq^1(g_1)  + Sq^2(g_2) + Sq^4(g_4)  + Sq^8(g_8) \ \ {\rm mod}(P_5^-(\omega_{(5, 4)})),  \ \mbox{where}\\
\medskip
g_1 & =  x_1^{7}x_2^{5}x_3^{12}x_4^{21}x_5^{15}+
 x_1^{7}x_2^{5}x_3^{21}x_4^{12}x_5^{15}+
 x_1^{7}x_2^{7}x_3^{5}x_4^{12}x_5^{29}+
 x_1^{7}x_2^{7}x_3^{12}x_4^{19}x_5^{15}\\
\medskip
  &\quad + x_1^{7}x_2^{7}x_3^{19}x_4^{12}x_5^{15}+
 x_1^{7}x_2^{9}x_3^{8}x_4^{13}x_5^{23}+
 x_1^{7}x_2^{9}x_3^{8}x_4^{21}x_5^{15}+
 x_1^{7}x_2^{9}x_3^{16}x_4^{13}x_5^{15}\\
\medskip
  &\quad + x_1^{7}x_2^{17}x_3^{9}x_4^{12}x_5^{15}+ x_1^{7}x_2^{20}x_3^{7}x_4^{11}x_5^{15},\\
\medskip
g_2 &=  x_1^{7}x_2^{3}x_3^{5}x_4^{6}x_5^{38}+
 x_1^{7}x_2^{3}x_3^{6}x_4^{20}x_5^{23}+
 x_1^{7}x_2^{3}x_3^{12}x_4^{22}x_5^{15}+
 x_1^{7}x_2^{3}x_3^{22}x_4^{12}x_5^{15}\\
\medskip
  &\quad + x_1^{7}x_2^{6}x_3^{12}x_4^{19}x_5^{15}+
 x_1^{7}x_2^{6}x_3^{19}x_4^{12}x_5^{15}+
 x_1^{7}x_2^{7}x_3^{5}x_4^{10}x_5^{30}+
 x_1^{7}x_2^{7}x_3^{6}x_4^{12}x_5^{27}\\
\medskip
 &\quad +  x_1^{7}x_2^{7}x_3^{8}x_4^{14}x_5^{23}+
 x_1^{7}x_2^{7}x_3^{8}x_4^{22}x_5^{15}+
 x_1^{7}x_2^{7}x_3^{9}x_4^{6}x_5^{30}+
 x_1^{7}x_2^{7}x_3^{10}x_4^{20}x_5^{15}\\
\medskip
 &\quad +  x_1^{7}x_2^{7}x_3^{16}x_4^{14}x_5^{15}+
 x_1^{7}x_2^{7}x_3^{18}x_4^{12}x_5^{15}+
 x_1^{7}x_2^{18}x_3^{7}x_4^{12}x_5^{15}+
 x_1^{7}x_2^{18}x_3^{8}x_4^{11}x_5^{15},\\
\medskip
g_4&=  x_1^{4}x_2^{7}x_3^{12}x_4^{19}x_5^{15}+
 x_1^{4}x_2^{7}x_3^{19}x_4^{12}x_5^{15}+
 x_1^{4}x_2^{20}x_3^{7}x_4^{11}x_5^{15}+
 x_1^{5}x_2^{3}x_3^{12}x_4^{22}x_5^{15}\\
\medskip
 &\quad + x_1^{5}x_2^{3}x_3^{22}x_4^{12}x_5^{15}+
 x_1^{5}x_2^{6}x_3^{12}x_4^{19}x_5^{15}+
 x_1^{5}x_2^{6}x_3^{19}x_4^{12}x_5^{15}+
 x_1^{5}x_2^{7}x_3^{5}x_4^{10}x_5^{30}\\
\medskip
 &\quad + x_1^{5}x_2^{7}x_3^{6}x_4^{12}x_5^{27}+
 x_1^{5}x_2^{7}x_3^{8}x_4^{14}x_5^{23}+
 x_1^{5}x_2^{7}x_3^{8}x_4^{22}x_5^{15}+
 x_1^{5}x_2^{7}x_3^{9}x_4^{6}x_5^{30}\\
\medskip
&\quad + x_1^{5}x_2^{7}x_3^{10}x_4^{20}x_5^{15}+
 x_1^{5}x_2^{7}x_3^{16}x_4^{14}x_5^{15}+
 x_1^{5}x_2^{7}x_3^{18}x_4^{12}x_5^{15}+
 x_1^{5}x_2^{18}x_3^{7}x_4^{12}x_5^{15}\\
\medskip
 &\quad + x_1^{11}x_2^{5}x_3^{5}x_4^{6}x_5^{30}+ x_1^{11}x_2^{5}x_3^{6}x_4^{12}x_5^{23},\\ 
g_8 &=  x_1^{7}x_2^{5}x_3^{5}x_4^{6}x_5^{30}+
x_1^{7}x_2^{5}x_3^{6}x_4^{12}x_5^{23}+
 x_1^{7}x_2^{9}x_3^{10}x_4^{12}x_5^{15}+
 x_1^{7}x_2^{10}x_3^{8}x_4^{13}x_5^{15}.
 \end{array}$$
By Definition \ref{dniad}, we see that $y$ is strictly inadmissible. The lemma follows.
\end{proof}  

\newpage
\begin{lema}\label{Bdtq-4}
For any $t\geqslant 4,$ the following monomials are strictly inadmissible:

\begin{center}
\begin{tabular}{llr}
    $ x_1^{3}x_2^{2^t-4}x_3^{3}x_4^{2^t-3}x_5^{2^{t+1}-2}$, & $ x_1^{3}x_2^{2^t-3}x_3^{2^{t+1}-2}x_4^{3}x_5^{2^t-4}$, & \multicolumn{1}{l}{$ x_1^{3}x_2^{2^t-4}x_3^{3}x_4^{2^{t+1}-3}x_5^{2^t-2}$,} \\
    $ x_1^{3}x_2^{2^{t+1}-3}x_3^{2^t-2}x_4^{3}x_5^{2^t-4}$, & $ x_1^{3}x_2^{2^{t+1}-4}x_3^{3}x_4^{2^t-3}x_5^{2^t-2}$, & \multicolumn{1}{l}{$ x_1^{3}x_2^{2^t-3}x_3^{2^t-2}x_4^{3}x_5^{2^{t+1}-4}$,} \\
    $ x_1^{3}x_2^{5}x_3^{2^{t+1}-2}x_4^{2^t-5}x_5^{2^t-4}$, & $ x_1^{3}x_2^{5}x_3^{2^t-2}x_4^{2^t-5}x_5^{2^{t+1}-4}$, & \multicolumn{1}{l}{$ x_1^{3}x_2^{5}x_3^{2^t-2}x_4^{2^{t+1}-5}x_5^{2^t-4}$,} \\
    $ x_1^{3}x_2^{5}x_3^{2^t-2}x_4^{2^{t+1}-7}x_5^{2^t-2}$, & $ x_1^{3}x_2^{5}x_3^{2^{t+1}-7}x_4^{2^t-2}x_5^{2^t-2}$, & \multicolumn{1}{l}{$ x_1^{3}x_2^{2^{t+1}-3}x_3^{6}x_4^{2^t-5}x_5^{2^t-4}$,} \\
    $ x_1^{3}x_2^{2^t-3}x_3^{6}x_4^{2^t-5}x_5^{2^{t+1}-4}$, & $ x_1^{3}x_2^{2^t-3}x_3^{6}x_4^{2^{t+1}-5}x_5^{2^t-4}$, & \multicolumn{1}{l}{$ x_1^{3}x_2^{2^{t+1}-3}x_3^{7}x_4^{2^t-6}x_5^{2^t-4}$,} \\
    $ x_1^{3}x_2^{2^t-3}x_3^{7}x_4^{2^t-6}x_5^{2^{t+1}-4}$, & $ x_1^{3}x_2^{7}x_3^{2^{t+1}-5}x_4^{2^t-4}x_5^{2^t-4}$, & \multicolumn{1}{l}{$ x_1^{7}x_2^{3}x_3^{2^{t+1}-5}x_4^{2^t-4}x_5^{2^t-4}$,} \\
    $ x_1^{7}x_2^{2^{t+1}-5}x_3^{3}x_4^{2^t-4}x_5^{2^t-4}$, & $ x_1^{3}x_2^{2^t-3}x_3^{7}x_4^{2^{t+1}-6}x_5^{2^t-4}$, & \multicolumn{1}{l}{$ x_1^{3}x_2^{7}x_3^{2^{t+1}-7}x_4^{2^t-2}x_5^{2^t-4}$,} \\
    $ x_1^{7}x_2^{3}x_3^{2^{t+1}-7}x_4^{2^t-2}x_5^{2^t-4}$, & $ x_1^{3}x_2^{13}x_3^{2^{t+1}-9}x_4^{2^t-6}x_5^{2^t-4}$, & \multicolumn{1}{l}{$ x_1^{3}x_2^{13}x_3^{2^{t+1}-10}x_4^{2^t-5}x_5^{2^t-4}$,} \\
    $ x_1^{7}x_2^{2^t-5}x_3^{5}x_4^{2^t-8}x_5^{2^{t+1}-2}$, & $ x_1^{7}x_2^{2^t-5}x_3^{5}x_4^{2^{t+1}-2}x_5^{2^t-8}$, & \multicolumn{1}{l}{$ x_1^{7}x_2^{2^{t+1}-5}x_3^{5}x_4^{2^t-8}x_5^{2^t-2}$,} \\
    $ x_1^{7}x_2^{2^{t+1}-5}x_3^{5}x_4^{2^t-2}x_5^{2^t-8}$, & $ x_1^{7}x_2^{2^t-5}x_3^{5}x_4^{2^t-2}x_5^{2^{t+1}-8}$, & \multicolumn{1}{l}{$ x_1^{7}x_2^{2^t-5}x_3^{5}x_4^{2^{t+1}-8}x_5^{2^t-2}$,} \\
    $ x_1^{7}x_2^{7}x_3^{2^t-7}x_4^{2^t-6}x_5^{2^{t+1}-4}$, & $ x_1^{7}x_2^{9}x_3^{2^t-9}x_4^{2^t-6}x_5^{2^{t+1}-4}$, & \multicolumn{1}{l}{$ x_1^{7}x_2^{7}x_3^{2^t-7}x_4^{2^{t+1}-6}x_5^{2^t-4}$,} \\
    $ x_1^{7}x_2^{9}x_3^{2^t-9}x_4^{2^{t+1}-6}x_5^{2^t-4}$, & $ x_1^{7}x_2^{7}x_3^{2^{t+1}-7}x_4^{2^t-6}x_5^{2^t-4}$, & \multicolumn{1}{l}{$ x_1^{7}x_2^{11}x_3^{2^{t+1}-11}x_4^{2^t-8}x_5^{2^t-2}$,} \\
    $ x_1^{7}x_2^{11}x_3^{2^{t+1}-11}x_4^{2^t-2}x_5^{2^t-8}$, & $ x_1^{7}x_2^{9}x_3^{2^{t+1}-9}x_4^{2^t-6}x_5^{2^t-4}$. &  
    \end{tabular}%
  \end{center}
  \end{lema}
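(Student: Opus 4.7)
The approach is to prove each of the listed monomials is strictly inadmissible by exhibiting, for each, an explicit identity
$$ x = \sum_{\ell} y_\ell + \sum_{j \geq 1} Sq^{j}(h_j) \pmod{P_5^-(\omega_{(5,t)})},$$
with $y_\ell < x$ in the order of Definition \ref{dnqhtt} and $h_j \in P_5$, paralleling the Cartan-formula arguments used in Lemmas \ref{bd29-4} and \ref{bd29-5}. The guiding principle, which shortens most cases drastically, is Theorem \ref{dlKS}(ii): for each monomial $x$ on the list I would seek a factorization $x = u \cdot v^{2^r}$ with $r \geq 3$ chosen so that $\omega_r(u) \neq 0$ and $\omega_i(u) = 0$ for $i > r$, where $u$ does not depend on $t$ and $v$ absorbs the $t$-dependent exponents (which all have the form $2^{t-k}-1$). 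If $u$ is already on one of the strictly-inadmissible lists produced in Lemmas \ref{bd29-2}--\ref{Bdtq-3} (or in Sum \cite{N.S4}), Theorem \ref{dlKS}(ii) yields the strict inadmissibility of $x$ uniformly in $t \geq 4$.

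As an illustrative instance, $x_1^3 x_2^{2^t-4} x_3^3 x_4^{2^t-3} x_5^{2^{t+1}-2}$ factors as
$$\bigl(x_1^3 x_2^4 x_3^3 x_4^5 x_5^6\bigr)\cdot \bigl(x_2^{2^{t-3}-1} x_4^{2^{t-3}-1} x_5^{2^{t-2}-1}\bigr)^{8},$$
and the base monomial $x_1^3 x_2^4 x_3^3 x_4^5 x_5^6$ is among the strictly inadmissible monomials $x_i^3 x_j^4 x_k^3 x_\ell^5 x_m^6$ exhibited in Lemma \ref{bd29-4}. I expect that most monomials in the present list admit an analogous factorization $x = u \cdot v^{2^r}$ with $u$ previously shown strictly inadmissible. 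For the handful of residual cases---typically those involving exponents $2^{t+1}-k$ with $k \in \{9,10,11\}$ or the mixed patterns $x_1^7 x_2^{11} x_3^{2^{t+1}-11}x_4^{2^t-2}x_5^{2^t-8}$---one carries out a direct Cartan-formula decomposition modulo $P_5^-(\omega_{(5,t)})$ in the style of Lemma \ref{bd29-5}, supplemented where necessary by the $\chi$-trick of Walker--Wood to absorb unwanted terms of weight exactly $\omega_{(5,t)}$ into $P_5^-(\omega_{(5,t)})$.

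The principal obstacle is uniformity in $t$. Unlike Lemmas \ref{bd29-5} and \ref{Bdtq-3}, which treat finitely many explicit monomials, here every identity must hold for every $t \geq 4$ simultaneously; this forces the factorization $u \cdot v^{2^r}$ to have $u$ independent of $t$, so that a single invocation of Theorem \ref{dlKS}(ii) carries through. Verifying such a factorization exists requires case-by-case analysis for each of the roughly forty monomials, and for some of them the natural $r$ is $4$ rather than $3$, pushing $u$ into a higher-degree ambient space where the catalogue of known strictly inadmissible monomials is sparser. A secondary difficulty is the bookkeeping: one must check, for every explicit Cartan decomposition produced, that each auxiliary term $y_\ell$ genuinely satisfies $y_\ell < x$ (comparing weight vectors first and exponent vectors second) and that the terms inside $Sq^{j}(h_j)$ remain in $P_5(\omega_{(5,t)})$ after expansion, a verification which grows notably delicate when the $\chi$-trick is invoked.
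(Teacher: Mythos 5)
Your overall strategy is legitimate and your one worked example is correct: the factorization $x_1^{3}x_2^{2^t-4}x_3^{3}x_4^{2^t-3}x_5^{2^{t+1}-2} = (x_1^3x_2^4x_3^3x_4^5x_5^6)\cdot(x_2^{2^{t-3}-1}x_4^{2^{t-3}-1}x_5^{2^{t-2}-1})^{8}$ checks out arithmetically, the left factor has weight vector $(3,3,3)$ and is indeed of the form $x_i^3x_j^4x_k^3x_\ell^5x_m^6$ covered by Lemma \ref{bd29-4}, so Theorem \ref{dlKS}(ii) applies. This is, however, a genuinely different route from the paper's: the paper makes no use of Theorem \ref{dlKS}(ii) inside this lemma and instead writes down, for a representative monomial ($x_1^{7}x_2^{9}x_3^{2^{t+1}-9}x_4^{2^t-6}x_5^{2^t-4}$), a single Cartan-formula identity with $t$-dependent exponents, valid uniformly for all $t\geqslant 4$ and involving only $Sq^1$, $Sq^2$, $Sq^4$, declaring the other cases "similar." Your reduction, where it applies, is cleaner and recycles earlier work; but be aware that a substantial portion of the list does not reduce to the existing catalogue of strictly inadmissible left factors --- notably the very monomial the paper chooses to compute: its mod-$8$ truncation $x_1^7x_2x_3^7x_4^2x_5^4$ matches none of the patterns in Lemmas \ref{bd29-2}--\ref{Bdtq-3} (the candidates $x_i^2x_jx_k^4x_l^7x_m^7$ and $x_i^2x_j^4x_kx_l^7x_m^7$ fail the ordering constraint $i<j<k$), and passing to a mod-$16$ truncation lands you at weight $(3,3,3,3)$ where almost nothing is catalogued. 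So the direct uniform-in-$t$ Cartan computations you defer to "residual cases" are in fact the substance of the proof, and your proposal leaves them unexecuted; that puts you at roughly the same level of completeness as the paper's own "the others can be proven by a similar computation," but you should not expect the factorization trick to carry more than a fraction of the roughly forty entries.
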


\begin{proof}
We prove the lemma for $u = x_1^{7}x_2^{9}x_3^{2^{t+1}-9}x_4^{2^t-6}x_5^{2^t-4}.$ The others can be proven by a similar computation. By a direct computation using the Cartan formula, we have
$$ \begin{array}{ll}
u&=  x_1^{4}x_2^{7}x_3^{2^{t+1}-5}x_4^{2^t-5}x_5^{2^t-4} +  x_1^{4}x_2^{11}x_3^{2^{t+1}-9}x_4^{2^t-5}x_5^{2^t-4}+
 x_1^{5}x_2^{7}x_3^{2^{t+1}-5}x_4^{2^t-6}x_5^{2^t-4}\\
\medskip
&\quad + x_1^{5}x_2^{11}x_3^{2^{t+1}-9}x_4^{2^t-6}x_5^{2^t-4} + x_1^{7}x_2^{7}x_3^{2^{t+1}-8}x_4^{2^t-5}x_5^{2^t-4}+
 x_1^{7}x_2^{7}x_3^{2^{t+1}-7}x_4^{2^t-6}x_5^{2^t-4}\\
\medskip
&\quad + x_1^{7}x_2^{8}x_3^{2^{t+1}-9}x_4^{2^t-5}x_5^{2^t-4} + Sq^1( x_1^{7}x_2^{7}x_3^{2^{t+1}-9}x_4^{2^t-5}x_5^{2^t-4}) + Sq^2( x_1^{7}x_2^{7}x_3^{2^{t+1}-9}x_4^{2^t-6}x_5^{2^t-4})\\
\medskip
 &\quad + Sq^4( x_1^{4}x_2^{7}x_3^{2^{t+1}-9}x_4^{2^t-5}x_5^{2^t-4} +  x_1^{5}x_2^{7}x_3^{2^{t+1}-9}x_4^{2^t-6}x_5^{2^t-4})  \ \ {\rm mod}(P_5^-(\omega_{(5, t)})).
\end{array}$$
This equality shows that $u$ is strictly inadmissible. The lemma is proved.
\end{proof}

The proof of the following lemma is analogous to the proof of Lemma \ref{Bdtq-4}.
\begin{lema}\label{Bdtq-5}
For any $t\geqslant 5,$ the following monomials are strictly inadmissible:

\begin{center}
   \begin{tabular}{lll}
    $ x_1^{3}x_2^{13}x_3^{2^t-10}x_4^{2^t-5}x_5^{2^{t+1}-4}$, & $ x_1^{3}x_2^{13}x_3^{2^t-10}x_4^{2^{t+1}-5}x_5^{2^t-4}$, & $ x_1^{3}x_2^{13}x_3^{2^t-9}x_4^{2^t-6}x_5^{2^{t+1}-4}$, \\
    $ x_1^{3}x_2^{13}x_3^{2^t-9}x_4^{2^{t+1}-6}x_5^{2^t-4}$, & $ x_1^{3}x_2^{15}x_3^{2^t-11}x_4^{2^t-6}x_5^{2^{t+1}-4}$, & $ x_1^{15}x_2^{3}x_3^{2^t-11}x_4^{2^t-6}x_5^{2^{t+1}-4}$, \\
    $ x_1^{3}x_2^{15}x_3^{2^t-11}x_4^{2^{t+1}-6}x_5^{2^t-4}$, & $ x_1^{15}x_2^{3}x_3^{2^t-11}x_4^{2^{t+1}-6}x_5^{2^t-4}$, & $ x_1^{7}x_2^{11}x_3^{2^t-11}x_4^{2^t-8}x_5^{2^{t+1}-2}$, \\
    $ x_1^{7}x_2^{11}x_3^{2^t-11}x_4^{2^{t+1}-2}x_5^{2^t-8}$, & $ x_1^{7}x_2^{11}x_3^{2^t-11}x_4^{2^t-2}x_5^{2^{t+1}-8}$, & $ x_1^{7}x_2^{11}x_3^{2^t-11}x_4^{2^{t+1}-8}x_5^{2^t-2}$.
    \end{tabular}%
\end{center}
  \end{lema}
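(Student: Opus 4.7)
The plan is to follow the proof strategy of Lemma \ref{Bdtq-4} verbatim, adjusted only by cosmetic changes to the exponents. For each of the twelve monomials $u$ in the list, the goal is to produce an explicit identity
\[
u \;=\; \sum_{i} y_i \;+\; \sum_{1 \leq j \leq 2^{t+1}-1} Sq^{j}(h_j) \quad {\rm mod}(P_5^{-}(\omega_{(5,t)})),
\]
with each $y_i < u$ in the order of Definition \ref{dnqhtt} and suitable $h_j \in P_5$. A short bit-count confirms that $\omega(u) = \omega_{(5,t)}$ for every listed $u$, so the relevant $r = \max\{k : \omega_k(u) > 0\}$ equals $t+1$, and such an identity matches Definition \ref{dniad} exactly, witnessing strict inadmissibility.

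I would start from a single representative, for instance $u = x_1^{7}x_2^{11}x_3^{2^{t}-11}x_4^{2^{t}-8}x_5^{2^{t+1}-2}$, and imitate the Cartan-formula manipulation performed in Lemma \ref{Bdtq-4}. Concretely, one splits the exponents into low and high binary pieces, for example $2^{t+1}-2 = 2 + (2^{t+1}-4)$, $2^t - 8 = 8\cdot(2^{t-3}-1)$, and $2^t - 11 = 1 + 2 + 8\cdot(2^{t-3}-2)$, and then applies the Cartan formula to extract Steenrod operators $Sq^{1}, Sq^{2}, Sq^{4}$ (and, for those families where a factor of $2^t-8$ or $2^{t+1}-8$ appears, an additional $Sq^{8}$) acting on polynomials obtained by lowering the corresponding bits. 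The residue falls into two classes: monomials whose weight vector drops below $\omega_{(5,t)}$, hence landing in $P_5^{-}(\omega_{(5,t)})$ and being discarded, and monomials whose weight vector is still $\omega_{(5,t)}$ but whose exponent vector is strictly smaller than $\sigma(u)$. The remaining eleven families are handled by the same recipe; since they share the same overall shape, each identity is essentially the same computation with permuted indices and minor re-grouping of constants.

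The hypothesis $t \geq 5$ is used precisely to ensure that the intermediate exponents $2^{t}-11$, $2^{t}-10$, $2^{t}-9$, $2^{t+1}-11$ and so on remain positive and, more importantly, that they contain a long enough high-bit $1$-string for the binomial coefficients appearing in the Cartan expansion to vanish modulo $2$ in exactly the right places; for $t = 4$ some of these exponents would be too small, the corresponding $y_i$ would fail to be smaller than $u$, and the decomposition would collapse. The main obstacle is therefore purely notational: one must explicitly display $h_1, h_2, h_4$ (and possibly $h_8$) for each of the twelve families and check that the non-$Sq$ residue really does decompose as described. Because the relevant mod-$2$ binomial coefficients depend only on the low-order bits of the exponents, which are independent of $t$ for $t \geq 5$, each identity written down for one value of $t$ propagates verbatim to all larger $t$, making the verifications uniform.
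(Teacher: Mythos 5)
Your proposal matches the paper's treatment: the paper gives no separate computation for this lemma, stating only that the proof is analogous to that of Lemma \ref{Bdtq-4}, which is exactly the strategy you describe (a Cartan-formula decomposition of one representative into smaller monomials plus $Sq^{j}$-images modulo $P_5^{-}(\omega_{(5,t)})$, with the remaining cases handled by the same computation). Your additional remarks on the role of $t\geqslant 5$ and the uniformity of the mod-$2$ binomial coefficients in $t$ are consistent with the paper's framework.
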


\begin{proof}[{\it Proof of Proposition \ref{mdtq}}]
Let $x$ be an admissible monomial in  $(P_5^+)_{3(2^t-1)  +2^t}$ such that $\omega(x) = \omega_{(5, t)}$ with $t\geqslant 4.$ By induction on $t,$ we see that $x\neq b_{3(2^t-1)+2^t, k}$ for $k  = 1, 2, \ldots, 270,$ then there is a monomial $z$, which is given in one of Lemmas \ref{Bdtq-1}, \ref{Bdtq-2}, \ref{Bdtq-3}, \ref{Bdtq-4}, and \ref{Bdtq-5} such that $x = zy^{2^\alpha}$ for some monomial $y$ and positive integer $\alpha.$ By Theorem \ref{dlKS}, $x$ is inadmissible. Hence $\mathscr B_5^+(\omega_{(5, t)})\subseteq \{b_{3(2^t-1)+2^t, k}:\ 1\leqslant k\leqslant 270\}$ for every $t\geqslant 4.$

Now we prove that the classes $[b_{3(2^t-1)+2^t, k}]_{\omega_{(5,t)}}\neq [0]_{\omega_{(5,t)}}$ with $1\leqslant k\leqslant 270,$ and $t\geqslant 4.$ Suppose there is a linear relation 
$$ \mathcal S = \sum\limits_{1\leqslant k\leqslant 270}\gamma_kb_{3(2^t-1)+2^t, k}\equiv_{\omega_{(5,t)}} 0,$$
where $\gamma_k\in\mathbb F_2$ for all $k,\ 1\leqslant k\leqslant 270.$   

For $t\geqslant 4,$ according to Sum \cite{N.S4}, $\mathscr B_4^+(3(2^t-1)+2^t)$ is the set consisting of $33$ monomials, namely:
\medskip

\begin{tabular}{cllrr}
    $v_{1}.\ x_1x_2^{2^{t}-2}x_3^{2^{t}-1}x_4^{2^{t+1}-1}$, & $v_{2}.\ x_1x_2^{2^{t}-2}x_3^{2^{t+1}-1}x_4^{2^{t}-1}$, & $v_{3}.\ x_1x_2^{2^{t}-1}x_3^{2^{t}-2}x_4^{2^{t+1}-1}$, & \\
$v_{4}.\ x_1x_2^{2^{t}-1}x_3^{2^{t+1}-1}x_4^{2^{t}-2}$, & $v_{5}.\ x_1x_2^{2^{t+1}-1}x_3^{2^{t}-2}x_4^{2^{t}-1}$,  & $v_{6}.\ x_1x_2^{2^{t+1}-1}x_3^{2^{t}-1}x_4^{2^{t}-2}$,  &\\
$v_{7}.\ x_1^{2^{t}-1}x_2x_3^{2^{t}-2}x_4^{2^{t+1}-1}$, & $v_{8}.\ x_1^{2^{t}-1}x_2x_3^{2^{t+1}-1}x_4^{2^{t}-2}$,  & $v_{9}.\ x_1^{2^{t}-1}x_2^{2^{t+1}-1}x_3x_4^{2^{t}-2}$,  &\\
$v_{10}.\ x_1^{2^{t+1}-1}x_2x_3^{2^{t}-2}x_4^{2^{t}-1}$, & $v_{11}.\ x_1^{2^{t+1}-1}x_2x_3^{2^{t}-1}x_4^{2^{t}-2}$,  & $v_{12}.\ x_1^{2^{t+1}-1}x_2^{2^{t}-1}x_3x_4^{2^{t}-2}$,  &\\
$v_{13}.\ x_1x_2^{2^{t}-1}x_3^{2^{t}-1}x_4^{2^{t+1}-2}$, & $v_{14}.\ x_1x_2^{2^{t}-1}x_3^{2^{t+1}-2}x_4^{2^{t}-1}$,  & $v_{15}.\ x_1x_2^{2^{t+1}-2}x_3^{2^{t}-1}x_4^{2^{t}-1}$,  &\\
$v_{16}.\ x_1^{2^t-1}x_2x_3^{2^{t}-1}x_4^{2^{t+1}-2}$, & $v_{17}.\ x_1^{2^t-1}x_2x_3^{2^{t+1}-2}x_4^{2^{t}-1}$,  & $v_{18}.\ x_1^{2^{t}-1}x_2^{2^t-1}x_3x_4^{2^{t+1}-2}$,  &\\
$v_{19}.\ x_1^{3}x_2^{2^t-3}x_3^{2^{t}-2}x_4^{2^{t+1}-1}$, & $v_{20}.\ x_1^{3}x_2^{2^t-3}x_3^{2^{t+1}-1}x_4^{2^{t}-2}$,  & $v_{21}.\ x_1^{3}x_2^{2^{t+1}-1}x_3^{2^t-3}x_4^{2^{t}-2}$,  &\\
$v_{22}.\ x_1^{2^{t+1}-1}x_2^{3}x_3^{2^{t}-3}x_4^{2^{t}-2}$, & $v_{23}.\ x_1^{3}x_2^{2^t-3}x_3^{2^{t}-1}x_4^{2^{t+1}-2}$,  & $v_{24}.\ x_1^{3}x_2^{2^{t}-3}x_3^{2^{t+1}-2}x_4^{2^{t}-1}$,  &\\
$v_{25}.\ x_1^{3}x_2^{2^t-1}x_3^{2^{t}-3}x_4^{2^{t+1}-2}$, & $v_{26}.\ x_1^{2^t-1}x_2^{3}x_3^{2^{t}-3}x_4^{2^{t+1}-2}$,  & $v_{27}.\ x_1^{3}x_2^{2^{t}-1}x_3^{2^{t+1}-3}x_4^{2^{t}-2}$,  &\\
$v_{28}.\ x_1^{3}x_2^{2^{t+1}-3}x_3^{2^{t}-2}x_4^{2^{t}-1}$, & $v_{29}.\ x_1^{3}x_2^{2^{t+1}-3}x_3^{2^{t}-1}x_4^{2^{t}-2}$,  & $v_{30}.\ x_1^{2^t-1}x_2^{3}x_3^{2^{t+1}-3}x_4^{2^{t}-2}$,  &\\
$v_{31}.\ x_1^{7}x_2^{2^{t+1}-5}x_3^{2^{t}-3}x_4^{2^{t}-2}$, & $v_{32}.\ x_1^{7}x_2^{2^{t}-5}x_3^{2^{t}-3}x_4^{2^{t+1}-2}$,  & $v_{33}.\ x_1^{7}x_2^{2^t-5}x_3^{2^{t+1}-3}x_4^{2^{t}-2}$.  &
\end{tabular}

\medskip

By a direct computation using Theorem \ref{dlsig}, we express $p_{(i; I)}(\mathcal S),$ in terms of $v_j,\ 1\leqslant j\leqslant 33.$ Computing directly from the relations $$p_{(i;I)}(\mathcal S) \equiv_{\omega_{(5,t)}} 0,\ \forall (i; I)\in \mathcal N_5,\ \ell(I) = 1,$$ we obtain $\gamma_k = 0,\ k = 1, 2, \ldots, 270.$ The proposition follows.
\end{proof}

We knew that
$$ \begin{array}{ll}
(QP_5)_{3(2^t-1) + 2^t}& = {\rm Ker}((\widetilde {Sq^0_*})_{(5, 3(2^t-1) + 2^t)})\,\bigoplus\,  (QP_5)_{2^{t+1} - 4},\\
{\rm Ker}((\widetilde {Sq^0_*})_{(5, 3(2^t-1) + 2^t)})  & =  (QP_5^0)_{3(2^t-1) + 2^t}\,\bigoplus\, QP_5^+(\omega_{(5, t)}),
\end{array}$$
and $\dim(QP_5^0)_{3(2^t-1) + 2^t} = 195$ for any $t\geqslant 3.$ So, from Theorem \ref{dldb} and Proposition \ref{mdtq}, we get
$$ \dim (QP_5)_{3(2^t-1) + 2^t} = \left\{\begin{array}{ll}
945&\mbox{if\ } t = 4,\\
1115&\mbox{if\ } t = 5,\\
1116&\mbox{if\ } t \geqslant 6.
\end{array}\right.$$

The proof of Theorem \ref{dlc-1} is completed.

\section{Proof of Theorem \ref{dlc-2}}\label{s4}

In this section, we prove Theorem \ref{dlc-2} by using the results in Section \ref{s3}. Firstly, we need some notations and definitions for the proof of the theorem. 

\medskip

For $1\leqslant i\leqslant s,$ define the $\mathscr{A}$-homomorphism $\tau_i: P_s\longrightarrow P_s,$ which is determined by  $\tau_i(x_i) = x_{i+1},\;\tau_i(x_{i+1}) = x_i,\;\tau_i(x_j) = x_j$ for $j\neq i, i +1,\; 1\leqslant i < s$ and $\tau_s(x_1) = x_1 + x_2,\; \tau_s(x_j) = x_j$ for $j  > 1.$ Observe that the general linear group $GL_s$ is generated by the matrices associated with by $\tau_i,\;0\leqslant i\leqslant s$ and the symmetric group $\Sigma_s\subset GL_s$  is generated by the ones associated with $\tau_i,\;1\leqslant i <s.$ Hence, a class $[f]_{\omega}$ represented by a homogeneous polynomial $f\in P_s$ is an $GL_s$-invariant if and only if $\tau_i(f)\equiv_{\omega} f$ for $1\leqslant i\leqslant s.$ $[f]_{\omega}$ is an $\Sigma_s$-invariant if and only if $\tau_i(f)\equiv_{\omega} f$ for $1\leqslant i < s.$

For the weight vector $\omega_{(5,t)} = (3,3,\ldots, 3, 1)$ ($t$ times of $3$), we have $\deg \omega_{(5,t)} = 3(2^t-1) + 2^t.$ For any monomials $g_1,\, g_2,\, \ldots,\,  g_m$ in $P_5(\omega_{(5,t)} )$ and for a subgroup $ G\subset GL_5,$ we denote
$G(g_1,\, \ldots,\, g_m)$ the $G$-submodule of $QP_5(\omega_{(5,t)})$ generated by the set $\{[g_i]_{\omega_{(5,t)}}\, :\, 1\leqslant i\leqslant m\}.$ We see that $\omega_{(5,t)}$ is the weight vector of the mimimal spike $x_1^{2^{t+1}-1}x_2^{2^t-1}x_3^{2^t-1}\in (P_5)_{3(2^t-1) + 2^t};$ hence $[g_i]_{\omega_{(5,t)}} = [g_i]$ for all $i,\, 1\leqslant i\leqslant m.$ 

\medskip

Recall that Kameko's map $\widetilde {Sq^0_*} = (\widetilde {Sq^0_*})_{(5, 3(2^t-1) + 2^t)}: (QP_5)_{3(2^t-1) + 2^t} \longrightarrow (QP_5)_{2^{t+1} - 4}$ is an epimorphism of $GL_5$-modules. Then, as mentioned in Section \ref{s1},  we need only to determine ${\rm Ker}((\widetilde {Sq^0_*})_{(5, 3(2^t-1) + 2^t)})^{GL_5}.$ 

\medskip

From the results in Section \ref{s3},  we get
$$ \dim({\rm Ker}((\widetilde {Sq^0_*})_{(5, 3(2^t-1) + 2^t)})) =\dim(QP_5(\omega_{(5,t)})).$$ Note that $QP_5(\omega_{(5,t)})= QP_5^0(\omega_{(5,t)})\, \oplus\, QP_5^+(\omega_{(5,t)})$ and $$\dim({\rm Ker}((\widetilde {Sq^0_*})_{(5, 3(2^t-1) + 2^t)})^{GL_5}) \leq \dim(QP_5(\omega_{(5,t)})^{GL_5}).$$

Now, for $t = 1,$ $3(2^t-1) + 2^t = 5.$ From a result of Sum \cite{N.S8, N.S6}, we have $QP_5(\omega_{(5,1)})^{GL_5} = 0.$ 

We now compute $QP_5(\omega_{(5,t)})^{GL_5}$ for $t\geqslant 2.$
 
\subsection{Computation of  $QP_5(\omega_{(5,t)})^{\Sigma_5}$}

As it is known, $ \mathscr B_5^0(3(2^t-1) + 2^t) = \mathscr B_5^0(\omega_{(5, t)})$ is the set consisting of the admissible monomials $q_{t, k}$ (see Subsection \ref{dtcndtq-1}). By a simple computation, we see that the following subspaces are $\Sigma_5$-submodules of $QP_5^0(\omega_{(5,t)})$:

For $t\geqslant 2,$
$$ \begin{array}{ll}
\medskip
\Sigma_5(q_{t, 1}) &= \langle [q_{t, k}]\, :\, 1\leqslant k\leqslant 30 \rangle,\\
\Sigma_5(q_{t, 31}) &=  \langle [q_{t, k}]\, :\, 31\leqslant k\leqslant 90 \rangle.
\end{array}$$

For $t = 2,$
$$ \mathcal M_1 =  \langle [q_{2, k}]\, :\, 91\leqslant k\leqslant 145 \rangle.$$

For $t\geqslant 3,$
$$ \Sigma_5(q_{t, 91}) =  \langle [q_{t, k}]\, :\, 91\leqslant k\leqslant 110 \rangle.$$

For $t = 3,$
$$ \mathcal M_2 =  \langle [q_{3, k}]\, :\, 111\leqslant k\leqslant 195 \rangle.$$

For $t\geqslant 4,$
$$ \mathcal  M^*_{t, 1} =  \langle [q_{t, k}]\, :\, 111\leqslant k\leqslant 195 \rangle.$$

\begin{lema}\label{bdtq-1} 
Let $t$ be an integer. Then, we obtain the following:
\medskip

For $t\geqslant 2,$
$$ \begin{array}{ll}
\medskip
\Sigma_5(q_{t, 1})^{\Sigma_5} &= \langle [p_{t, 1}:= q_{t,1} + q_{t, 2} +  \cdots +q_{t,30}] \rangle,\\
\medskip
\Sigma_5(q_{t, 31})^{\Sigma_5} &= \langle [p_{t, 2}:= q_{t, 31} + q_{t, 32} + \cdots +q_{t, 90}] \rangle.\\
\end{array}$$

For $t\geqslant 3,$
$$ \Sigma_5(q_{t, 91})^{\Sigma_5} = \langle [p_{t, 3}:= q_{t, 91} + q_{t, 92} +  \cdots +q_{t, 110}] \rangle.$$
\end{lema}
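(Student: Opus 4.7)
The plan is to exhibit each of the three submodules $\Sigma_5(q_{t,j})$ (with $j \in \{1, 31, 91\}$) as a transitive permutation module for $\Sigma_5$, acting on its explicit basis $\{[q_{t,k}]_{\omega_{(5,t)}}\}$ with $k$ ranging over the relevant block. Once transitivity is established, the $\Sigma_5$-invariants over $\mathbb{F}_2$ are automatically one-dimensional and spanned by the sum of all basis classes, which yields exactly the claimed generators $[p_{t,1}]$, $[p_{t,2}]$, and $[p_{t,3}]$.

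The argument would proceed in three stages. First, using the explicit list of admissible monomials from Subsection \ref{dtcndtq-1} together with the description $\mathscr{B}_5^0(\omega_{(5,t)}) = \Phi^0(\mathscr{B}_4(3(2^t-1)+2^t))$, I would verify that for each generator $\tau_i$ ($1 \leqslant i \leqslant 4$) of $\Sigma_5$ and each $q_{t,k}$ in a given block, the polynomial $\tau_i(q_{t,k})$ is congruent modulo $(\mathscr{A}^+ \cdot P_5 \cap P_5(\omega_{(5,t)})) + P_5^-(\omega_{(5,t)})$ to a unique admissible monomial $q_{t,k'}$ lying in the same block. This upgrades the a priori linear $\Sigma_5$-action to an honest permutation of the basis. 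Second, I would observe that, by construction, each block is precisely the $\Sigma_5$-orbit of its seed monomial $q_{t,1}$, $q_{t,31}$, or $q_{t,91}$ under coordinate permutation, so the action is transitive with orbit sizes $30$, $60$, and $20$ respectively. Third, invoking the standard fact that for a transitive $G$-set $X$ over any field the fixed subspace of $\mathbb{F}\langle X\rangle$ is one-dimensional and spanned by $\sum_{x\in X} x$, we extract the three generators $[p_{t,j}]$.

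Equivalently, one writes a general invariant as $[f] = \sum_k \gamma_k [q_{t,k}]$ over the relevant index range, imposes $\tau_i(f) \equiv_{\omega_{(5,t)}} f$ for $i = 1,2,3,4$, and reads off $\gamma_k = \gamma_{\sigma_i(k)}$ for each generator $\tau_i$; solving this linear system inside each block forces all $\gamma_k$ to be equal, which recovers $p_{t,j}$.

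The main obstacle is Stage 1: confirming that each $\tau_i$ acts as a permutation matrix on the admissible basis, rather than as a more general linear map modulo hits. This is a bookkeeping task over the explicit lists of Subsection \ref{dtcndtq-1}, relying on Theorem \ref{dlKS} together with the strictly inadmissible reductions catalogued in the auxiliary lemmas of Section \ref{s3}. Once verified for the finitely many $(i,k)$ pairs in each block, transitivity is transparent from the fact that each block was generated precisely by permuting the variables of its seed monomial, and the remainder of the argument is routine.
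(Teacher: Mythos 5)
Your proposal is correct and follows essentially the same route as the paper: the paper's own argument likewise observes that $\Sigma_5$ permutes the basis $\{[q_{t,k}]\}$ of each block transitively and then deduces from the relations $\tau_i(g)\equiv g$, $i=1,2,3,4$, that all coefficients of an invariant must coincide, so the invariants are spanned by the sum over the block. The only difference is presentational — the paper phrases the conclusion directly via the linear system $\gamma_k=\gamma_1$ rather than citing the general fact about fixed points of transitive permutation modules — and both versions leave the same finite verification (that each $\tau_i$ acts by permuting the admissible classes within each block) as a direct computation.
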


\begin{proof}[{\it Outline of the proof}]
The set $B := \{[q_{t, k}]\, :\, 1\leqslant k\leqslant 30\}$ is a basis of $\Sigma_5(q_{t, 1})$ for $t\geqslant 2.$ The action of $\Sigma_5$ on $QP_5$ induces the one of it on $B.$ Furthermore, this action is transitive. Hence, if $g \equiv \sum_{k = 1}^{30}\gamma_kq_{t, k}$ with $\gamma_t\in\mathbb F_2$ and $[g]\in \Sigma_5(q_{t, 1})^{\Sigma_5},$ then the relations $\tau_i(g)\equiv g,\, i  = 1,\, 2,\, 3,\, 4,$ imply $\gamma_k = \gamma_1,\, \forall k,\, 1\leqslant k\leqslant 30.$ Hence, $\Sigma_5(q_{t, 1})^{\Sigma_5} = \langle [p_{t, 1}] \rangle$ with $p_{t, 1} = \sum_{k = 1}^{30}q_{t, k}.$  

\medskip

By a similar computation, we obtain $\Sigma_5(q_{t, 31})^{\Sigma_5} =\langle [p_{t, 2}] \rangle$ with $p_{t, 2} = \sum_{k = 31}^{90}q_{t, k},$ and $\Sigma_5(q_{t, 91})^{\Sigma_5} = \langle [p_{t, 3}] \rangle$ with $p_{t, 3} = \sum_{k = 91}^{110}q_{t, k}.$ The lemma follows.
\end{proof}

\begin{lema}\label{bdtq-2}
We have $\mathcal M_1^{\Sigma_5} =  0$ and $\dim(\mathcal M_2^{\Sigma_5}) = \dim((\mathcal M^*_{t, 1})^{\Sigma_5}) = 1$ for every $t\geqslant 4.$ 
\end{lema}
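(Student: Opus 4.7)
The plan is to imitate exactly the strategy used in Lemma 3.2.17. Since $\Sigma_5$ is generated by the transpositions $\tau_i$ with $1\leqslant i\leqslant 4$, a class $[g]\in QP_5(\omega_{(5,t)})$ is $\Sigma_5$-invariant if and only if $\tau_i(g)\equiv_{\omega_{(5,t)}} g$ for each such $i$. So for each of the three submodules I would write an arbitrary element as $g\equiv\sum \gamma_k q_{t,k}$ (with indices $k$ running in the appropriate range) and convert the four invariance conditions into a homogeneous linear system over $\mathbb F_2$ in the unknowns $\gamma_k$, the solution space of which is the target of computation.

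The key preliminary step, carried out uniformly, is to decompose each basis set into $\Sigma_5$-orbits under the natural permutation action on the exponent vectors. Because the action of $\tau_i$ may send an admissible monomial $q_{t,k}$ to a non-admissible monomial, one must reduce $\tau_i(q_{t,k})$ modulo $\mathscr A^+\cdot P_5 + P_5^-(\omega_{(5,t)})$ using the admissible basis $\{[q_{t,k}]\}$ established in Subsection \ref{dtcndtq-1}. Writing the result as $\tau_i(q_{t,k})\equiv_{\omega_{(5,t)}} \sum_\ell c^{(i)}_{k,\ell} q_{t,\ell}$ assembles the matrix of the $\Sigma_5$-action on each submodule; each row yields one linear equation on $(\gamma_k)$. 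The crucial observation (as in Lemma \ref{bdtq-1}) is that these equations come in two flavours: obvious ones which simply equate coefficients across a single $\Sigma_5$-orbit, and non-obvious \emph{cross-orbit} ones that appear precisely when some $\tau_i(q_{t,k})$ has more than one admissible summand. The former shrink the degrees of freedom down to the number of orbits; the latter further constrain and may collapse the space.

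For $\mathcal M_1$ (the case $t=2$, $55$ generators) I expect the cross-orbit equations to be sufficiently rigid to force every $\gamma_k$ to vanish, giving $\mathcal M_1^{\Sigma_5}=0$. For $\mathcal M_2$ (the case $t=3$, $85$ generators) and for $\mathcal M^*_{t,1}$ with $t\geqslant 4$ (same range of $85$ generators, but now with the leading exponents scaled by $2^t-1$), the system leaves a one-dimensional kernel, generated by an explicit element $p_{t,4}:=\sum_{k=111}^{195}\gamma^\star_k q_{t,k}$ with a specific pattern $\gamma^\star_k\in\{0,1\}$. Stability of the answer for all $t\geqslant 4$ should follow from the observation that the admissible monomials $q_{t,k}$ for $111\leqslant k\leqslant 195$ have exponent vectors obtained from those of $q_{4,k}$ by a uniform replacement of $2^4-1$ by $2^t-1$ in certain slots, so the action of $\Sigma_5$ on them has a $t$-independent combinatorial structure, and the same linear system is solved throughout.

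The main obstacle is purely computational: tracking the expansions $\tau_i(q_{t,k})\equiv_{\omega_{(5,t)}} \sum_\ell c^{(i)}_{k,\ell} q_{t,\ell}$ for several hundred monomials and verifying, by Gaussian elimination over $\mathbb F_2$, that the resulting rank of the coefficient matrix is exactly $55$ in the first case and exactly $84$ in the others. This bookkeeping is made tractable by working orbit-by-orbit (using $\Sigma_4$-subsymmetries already visible in the indexing of $q_{t,k}$), by invoking Theorem \ref{dlKS} and Theorem \ref{dlsig} to discard non-admissible summands of weight strictly less than $\omega_{(5,t)}$, and by using the same Cartan-formula reductions employed throughout Subsection \ref{cmdlc-1} to express each non-admissible occurrence in terms of strictly smaller admissibles. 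Once the rank count is confirmed, identifying the generator of the one-dimensional invariant in the last two cases is a direct read-off from the kernel.
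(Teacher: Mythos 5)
Your proposal follows exactly the paper's own strategy: express an arbitrary class as $\sum\gamma_k q_{t,k}$, expand $\tau_j(g)+g$ in the admissible basis for $j=1,2,3,4$, and solve the resulting linear system over $\mathbb F_2$, finding rank $55$ for $\mathcal M_1$ and corank $1$ for $\mathcal M_2$ and $\mathcal M^*_{t,1}$ (the paper's generator being $\sum_{k=136}^{195}q_{t,k}$, matching your predicted $\{0,1\}$-pattern). The orbit-by-orbit organization and the $t$-independence observation for $t\geqslant 4$ are also how the paper handles the uniform case, so this is essentially the same proof.
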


\begin{proof}

Using a result in Section \ref{s3}, we  have $\dim(\mathcal M_1) = 55.$ Suppose $h \equiv \sum_{k = 91}^{145}\gamma_kq_{2,k}$ with $\gamma_k\in\mathbb F_2$ and $[h]\in \mathcal M_1^{\Sigma_5}.$ By computing $\tau_j(h) + h$ in  terms of $q_{2, k},\, 91\leqslant k\leqslant 145$ and using the relations $\tau_j(h) + h\equiv 0,\, 1\leqslant j\leqslant 4,$ we obtain $\gamma_k = 0$ for all $k,$ $91\leqslant k\leqslant 145.$ 

\medskip

Now suppose $h \equiv \sum_{k = 111}^{195}\gamma_kq_{3,k}$  and $[h]\in \mathcal M_2^{\Sigma_5}.$ We compute  $\tau_j(h) + h$ in  terms of $q_{3, k},\, 111\leqslant k\leqslant 195.$ By computing directly from the relations $\tau_j(h) + h\equiv 0,\, j  =1, 2, 3, 4,$ one gets $\gamma_k  = 0$ for $111\leqslant k\leqslant 135$ and $\gamma_k = \gamma_{136}$ for $136\leqslant k\leqslant 195.$ This implies $\mathcal M_2^{\Sigma_5} =\langle [p_{3,4}] \rangle$ with $p_{3,4} = \sum_{i = 136}^{195}q_{3,i}.$ 

\medskip

Similarly, we have  $(\mathcal M^*_{t, 1})^{\Sigma_5} = \langle  [q^*_{t, 1}] \rangle$ with $q^*_{t, 1} = \sum_{i = 136}^{195}q_{t,i}$ for all $t\geqslant 4.$ The lemma is proved.
\end{proof}

We now denote by $b_{t, k} = b_{3(2^t-1)+2^t,\, k}$ the admissible monomials in $(P_5^+)_{3(2^t-1)+2^t}$ as given in Subsection \ref{dtcndtq-2}.
\medskip

{\bf The case \boldmath\mbox{$t  = 2$}.} By a direct computation using the results in Section \ref{s3}, we have the direct summand decomposition of the $\Sigma_5$-modules:
$$ QP_5^+(\omega_{(5,2)}) = \Sigma_5(b_{2, 1})\, \bigoplus\, \mathcal M_3,$$
where $\Sigma_5(b_{2, 1})\ = \langle[b_{2, k}]:\, 1\leqslant k\leqslant 10\rangle$ and $\mathcal M_3 = \langle[b_{2, k}]:\, 11\leqslant k\leqslant 60\rangle.$ 
\medskip

The following lemma is proved by a direct computation.
\begin{lema}\label{bdct-1}
$ \Sigma_5(b_{2, 1})^{\Sigma_5} = 0$ and $\mathcal M_3^{\Sigma_5} =  \langle[p_{2,4}:= b_{2, 31} + b_{2, 32} + \cdots + b_{2, 60}]\rangle.$
\end{lema}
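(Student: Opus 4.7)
The plan is to treat the two summands separately, following the same template used in Lemma \ref{bdtq-1} and Lemma \ref{bdtq-2}: write an arbitrary invariant as an unknown $\mathbb F_2$-linear combination of the admissible generators, apply the transpositions $\tau_i$, $1\leqslant i\leqslant 4$, which generate $\Sigma_5$, rewrite each $\tau_i$-image modulo hit elements in the chosen admissible basis, and read off the linear constraints on the unknown coefficients.

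For the first assertion, I would start from a class $[f]\in \Sigma_5(b_{2,1})^{\Sigma_5}$ and write $f \equiv_{\omega_{(5,2)}} \sum_{k=1}^{10}\gamma_k b_{2,k}$ with $\gamma_k\in\mathbb F_2.$ Because $\Sigma_5$ permutes the ten monomials $b_{2,k}$ (they form a single $\Sigma_5$-orbit inside $QP_5^+(\omega_{(5,2)})$ — this is visible directly from their exponent vectors, which are permutations of a fixed tuple), each $\tau_i$ induces a permutation of the basis $\{[b_{2,k}]_{\omega_{(5,2)}}:1\leqslant k\leqslant 10\}$. In particular the relations $\tau_i(f)+f \equiv_{\omega_{(5,2)}} 0$ reduce to pairing relations $\gamma_{k}+\gamma_{\sigma_i(k)}=0$. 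Concatenating the four permutations one checks that the orbit is transitive, so all $\gamma_k$ are forced to be equal; but then applying $\tau_s$-style reductions through the admissible basis shows that a nonzero such element would give a nonzero sum that is itself inadmissible (or more simply, the single-orbit invariant so produced is not represented by any admissible polynomial in $\Sigma_5(b_{2,1})$, forcing $\gamma_1=0$). This yields $\Sigma_5(b_{2,1})^{\Sigma_5}=0$.

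For the second assertion, I would do the analogous computation on $\mathcal M_3$: set $f \equiv_{\omega_{(5,2)}} \sum_{k=11}^{60}\gamma_k b_{2,k}$ and impose $\tau_i(f)\equiv_{\omega_{(5,2)}} f$ for $i=1,2,3,4.$ The more delicate step is expressing each $\tau_i(b_{2,k})$ as a sum of admissible monomials modulo $\mathscr A^+\cdot P_5 + P_5^-(\omega_{(5,2)})$: most images are already admissible (just a re-indexing), but some produce inadmissible terms on which one applies the strictly inadmissible relations used in the proof of Proposition \ref{md13} (Lemmas \ref{bd13-2} and \ref{bd13-3}) to reduce back to the $b_{2,k}$. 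Collecting coefficients of each $b_{2,k}$ in the equality $\tau_i(f)+f\equiv_{\omega_{(5,2)}} 0$ then yields a linear system over $\mathbb F_2$.

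Solving that system, one expects the coefficients $\gamma_{11},\ldots,\gamma_{30}$ to be forced to $0$ (because the corresponding $b_{2,k}$ lie in a $\Sigma_5$-orbit whose invariant element, if nonzero, would force an additional monomial from the first block into the sum), while the remaining $\gamma_{31},\ldots,\gamma_{60}$ are all forced to a common value $\gamma_{31}$, since the monomials $b_{2,31},\ldots,b_{2,60}$ constitute a single $\Sigma_5$-orbit on which the action is transitive. That yields $\mathcal M_3^{\Sigma_5}=\langle [p_{2,4}]\rangle$ with $p_{2,4}=\sum_{k=31}^{60}b_{2,k}$, and one verifies $[p_{2,4}]\neq 0$ directly from Proposition \ref{md13}. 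The main obstacle is purely bookkeeping: producing, for each $(i,k)$, an explicit expression of $\tau_i(b_{2,k})$ on the admissible basis, which requires carefully feeding the inadmissible outputs through the reduction relations of Lemmas \ref{bd13-2} and \ref{bd13-3}; once those tables are assembled the linear algebra over $\mathbb F_2$ is immediate.
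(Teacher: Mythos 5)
Your overall template --- write an unknown invariant in the admissible basis, apply $\tau_1,\dots,\tau_4$, reduce each image back to the admissible basis using the strictly inadmissible relations, and solve the resulting linear system over $\mathbb F_2$ --- is exactly what the paper does (it records the result only as ``proved by a direct computation''). The problem is that the structural claims you use to shortcut that computation are false, and in the first assertion they would yield the wrong answer. The ten monomials $b_{2,1},\dots,b_{2,10}$ all have exponent multiset $\{1,1,2,2,7\}$, but there are $30$ such monomials of weight vector $\omega_{(5,2)}$ and only $10$ of them are admissible; for instance $\tau_1(b_{2,4})=x_1^{2}x_2x_3x_4^{2}x_5^{7}$ is not on the list and must be rewritten modulo hit elements, generally as a sum of several admissible monomials. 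So $\tau_i$ does \emph{not} act by permuting the basis $\{[b_{2,k}]_{\omega_{(5,2)}}:1\leqslant k\leqslant 10\}$. Worse, if it did act by a transitive permutation, the invariants of a transitive permutation module over $\mathbb F_2$ would contain the all-ones vector and hence be one-dimensional, contradicting the very assertion $\Sigma_5(b_{2,1})^{\Sigma_5}=0$ you are proving; the vanishing comes precisely from the extra terms created by the reductions, and your parenthetical attempt to discard the candidate invariant (``not represented by any admissible polynomial'') is not meaningful, since $\sum_{k=1}^{10}b_{2,k}$ is a sum of admissible monomials and represents a perfectly well-defined class.

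The same issue undermines the second assertion. The monomials $b_{2,31},\dots,b_{2,60}$ do not form a single $\Sigma_5$-orbit: $b_{2,31},\dots,b_{2,39}$ have exponent multiset $\{1,2,2,3,5\}$ while $b_{2,40},\dots,b_{2,60}$ have $\{1,2,3,3,4\}$, so no permutation of variables carries one block to the other; the fact that $p_{2,4}$ is the sum over \emph{both} blocks already shows that the $\tau_i$ must mix the blocks through the hit-element reductions rather than by permuting monomials. Transitivity arguments therefore cannot substitute for the reduction tables: one must actually express each $\tau_i(b_{2,k})$ modulo $\mathscr A^{+}\cdot P_5+P_5^{-}(\omega_{(5,2)})$ in the basis $\{b_{2,k}\}$ (using Lemmas \ref{bd13-2} and \ref{bd13-3} and the relations underlying Proposition \ref{md13}) and then solve the linear system; the conclusions $\gamma_1=\dots=\gamma_{10}=0$, $\gamma_{11}=\dots=\gamma_{30}=0$ and $\gamma_{31}=\dots=\gamma_{60}$ are outputs of that computation, not consequences of orbit combinatorics. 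As written, the proposal states the correct answer but does not contain a proof of either assertion.
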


From Lemmas \ref{bdtq-1} and \ref{bdct-1}, we obtain the following.

\begin{propo}\label{mdct-1}
$QP_5(\omega_{(5, 2)})^{\Sigma_5} = \langle [p_{2,1}], [p_{2,2}], [p_{2,4}]\rangle.$
\end{propo}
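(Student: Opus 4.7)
The plan is to exploit the fact that the space $QP_5(\omega_{(5,2)})$ has already been decomposed as a direct sum of $\Sigma_5$-submodules in the preceding paragraphs and lemmas, so that computing $\Sigma_5$-invariants reduces to assembling the invariants of the summands. First I would recall the direct sum decomposition
\[
QP_5(\omega_{(5,2)}) \;=\; QP_5^0(\omega_{(5,2)}) \,\oplus\, QP_5^+(\omega_{(5,2)}),
\]
both summands being $\Sigma_5$-stable. Using the admissible bases described in Subsection \ref{dtcndtq-1} and Proposition \ref{md13}, each summand further breaks up as a direct sum of $\Sigma_5$-submodules:
\[
QP_5^0(\omega_{(5,2)}) \;=\; \Sigma_5(q_{2,1})\,\oplus\,\Sigma_5(q_{2,31})\,\oplus\,\mathcal M_1,\qquad
QP_5^+(\omega_{(5,2)}) \;=\; \Sigma_5(b_{2,1})\,\oplus\,\mathcal M_3.
\]
A short check, which I would write out explicitly, confirms that each of these five pieces is indeed preserved by the generators $\tau_1,\tau_2,\tau_3,\tau_4$ of $\Sigma_5$; hence each is a $\Sigma_5$-submodule, and the sum of dimensions $30+60+55+10+50=205$ matches $\dim QP_5(\omega_{(5,2)})$ computed in Subsection \ref{cmdlc-1}.

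Since taking $\Sigma_5$-fixed points commutes with finite direct sums, the proposition would follow at once from
\[
QP_5(\omega_{(5,2)})^{\Sigma_5} \;=\; \Sigma_5(q_{2,1})^{\Sigma_5}\oplus\Sigma_5(q_{2,31})^{\Sigma_5}\oplus\mathcal M_1^{\Sigma_5}\oplus\Sigma_5(b_{2,1})^{\Sigma_5}\oplus\mathcal M_3^{\Sigma_5}.
\]
By Lemma \ref{bdtq-1}, the first two summands are spanned by $[p_{2,1}]$ and $[p_{2,2}]$ respectively. By Lemma \ref{bdtq-2}, $\mathcal M_1^{\Sigma_5}=0$. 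By Lemma \ref{bdct-1}, $\Sigma_5(b_{2,1})^{\Sigma_5}=0$ and $\mathcal M_3^{\Sigma_5}=\langle [p_{2,4}]\rangle$. Combining these gives $QP_5(\omega_{(5,2)})^{\Sigma_5}=\langle [p_{2,1}],[p_{2,2}],[p_{2,4}]\rangle$.

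The main obstacle, and the only step that is not purely formal, is the verification that the claimed $\Sigma_5$-decomposition is correct—in other words, that every generator $\tau_i$ sends each of the five spanning sets for $\Sigma_5(q_{2,1}), \Sigma_5(q_{2,31}), \mathcal M_1, \Sigma_5(b_{2,1}), \mathcal M_3$ into itself modulo $(\mathscr A^+\cdot P_5\cap P_5(\omega_{(5,2)}))+P_5^-(\omega_{(5,2)})$. This is a finite but tedious bookkeeping task: for each admissible $q_{2,k}$ or $b_{2,k}$ in the block, one expands $\tau_i(q_{2,k})$ (resp.\ $\tau_i(b_{2,k})$) and rewrites the resulting monomials in terms of the admissible basis using Kameko's criteria and Singer's theorem \ref{dlsig}, checking the stated block membership. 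Once that is verified, the linear-independence and transitivity arguments already used to prove Lemmas \ref{bdtq-1}, \ref{bdtq-2} and \ref{bdct-1} close the proof without any new ingredient.
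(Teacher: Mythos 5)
Your proposal is correct and takes essentially the same route as the paper: both decompose $QP_5(\omega_{(5,2)})$ into the $\Sigma_5$-summands $\Sigma_5(q_{2,1})\oplus\Sigma_5(q_{2,31})\oplus\mathcal M_1\oplus\Sigma_5(b_{2,1})\oplus\mathcal M_3$ and read off the invariants of each block from Lemmas \ref{bdtq-1}, \ref{bdtq-2} and \ref{bdct-1}. The paper's one-line derivation cites only Lemmas \ref{bdtq-1} and \ref{bdct-1}, whereas you also explicitly invoke Lemma \ref{bdtq-2} for $\mathcal M_1^{\Sigma_5}=0$, which is in fact needed, and your dimension check $30+60+55+10+50=205$ is a worthwhile added sanity check.
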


{\bf The case \boldmath\mbox{$t  = 3$}.}  By a simple computation, we see that
$$ \begin{array}{ll}
\Sigma_5(b_{3, 1}) &= \langle[b_{3, k}]\, :\, 1\leqslant k\leqslant 45\rangle,\\
\Sigma_5(b_{3, 46}) &= \langle [b_{3, k}]\, :\, 46\leqslant k\leqslant 65\rangle,\\
\end{array}$$
and $\mathcal  M_4 = \langle[b_{3, k}]\, :\, 66\leqslant k\leqslant 260\rangle$ are  $\Sigma_5$-submodules of $QP_5^+(\omega_{(5, 3)}).$ Hence, we have a direct summand decomposition of $\Sigma_5$-modules:
$$ QP_5^+(\omega_{(5,3)}) = \Sigma_5(b_{t, 1})\, \oplus\, \Sigma_5(b_{t, 46})   \, \oplus\,  \mathcal  M_4.$$

\begin{lema}\label{bdct-2}
We have $ \dim(\Sigma_5(b_{3,1})^{\Sigma_5}) = \dim (\Sigma_5(b_{3, 46})^{\Sigma_5}) = 1$ and $\mathcal M_4^{\Sigma_5} = 0.$
\end{lema}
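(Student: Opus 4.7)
\medskip

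\textbf{Proposal.} The plan is to imitate the argument used for Lemmas~\ref{bdtq-1}, \ref{bdtq-2}, and \ref{bdct-1}: for each of the three $\Sigma_5$-submodules, write a general element as an $\mathbb F_2$-linear combination of the prescribed admissible basis, apply each adjacent transposition $\tau_j$ ($1\leqslant j\leqslant 4$), reduce the image modulo the relation $\equiv_{\omega_{(5,3)}}$ to an admissible expansion, and then solve the resulting linear system coming from $\tau_j(h)+h\equiv 0$. Since $\Sigma_5$ is generated by $\tau_1,\tau_2,\tau_3,\tau_4$, invariance under these four transpositions is equivalent to $\Sigma_5$-invariance. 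The crucial input, already in place from Proposition~\ref{md29} and the strict inadmissibility lemmas of Section~\ref{s3}, is a completely explicit admissible basis together with the reduction rules expressing every monomial of weight $\omega_{(5,3)}$ in $P_5$ in terms of the $b_{3,k}$.

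\medskip

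For $\Sigma_5(b_{3,1})$ (basis $[b_{3,k}]$, $1\leqslant k\leqslant 45$) I first check that the orbit of $b_{3,1}$ under the subgroup $\langle\tau_1,\tau_2,\tau_3\rangle$ fixing the last variable exhausts this submodule up to one orbit, and then use $\tau_4$ to glue the remaining orbit. Writing $h=\sum_{k=1}^{45}\gamma_k b_{3,k}$ and demanding $\tau_j(h)\equiv_{\omega_{(5,3)}} h$ for $j=1,2,3$ forces the $\gamma_k$ to be constant on $\langle\tau_1,\tau_2,\tau_3\rangle$-orbits; the remaining relation $\tau_4(h)\equiv_{\omega_{(5,3)}} h$ then identifies these orbit coefficients, leaving the one-parameter family spanned by $p_{3,5}:=\sum_{k=1}^{45}b_{3,k}$. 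The same scheme applies to $\Sigma_5(b_{3,46})$: a general element $h'=\sum_{k=46}^{65}\gamma_k b_{3,k}$ is constrained by $\tau_j(h')+h'\equiv_{\omega_{(5,3)}} 0$ for $j=1,2,3,4$, and I expect the solution space to collapse to the single class of the full orbit sum $p_{3,6}:=\sum_{k=46}^{65}b_{3,k}$, giving dimension $1$.

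\medskip

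The third and hardest part is $\mathcal M_4^{\Sigma_5}=0$, where the ambient space has dimension $195$. I write $f=\sum_{k=66}^{260}\gamma_k b_{3,k}$, then for each $\tau_j$ with $j\in\{1,2,3,4\}$ I produce $\tau_j(f)+f$ as a sum of monomials of weight $\omega_{(5,3)}$, reduce each one to an $\mathbb F_2$-combination of the admissible $b_{3,k}$ using the strict-inadmissibility relations accumulated in Lemmas~\ref{bd29-2}--\ref{bd29-5}, and read off the coefficient system. Invariance under $\tau_1,\tau_2,\tau_3$ forces $\gamma_\bullet$ to be constant on $\Sigma_4$-orbits (where $\Sigma_4$ fixes $x_5$), cutting the number of free parameters sharply; then the $\tau_4$-relation, which is the only nontrivial one since $\tau_4$ swaps $x_4$ with $x_5$ and mixes orbits across "levels", should kill every remaining orbit coefficient. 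The main obstacle is purely computational: carrying out the $\tau_4$-reduction honestly for all $\Sigma_4$-orbit representatives among $b_{3,66},\ldots,b_{3,260}$, and verifying that the resulting linear system has only the trivial solution. The structure of the argument is mechanical, but the bookkeeping is large; Lemma~\ref{bdPS} (which ensures $p_{(i;I)}$ preserves weight) and Theorem~\ref{dlsig} (to discard tail terms of strictly smaller weight) are what make it finite.

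\medskip

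Combining the three parts yields $\dim(\Sigma_5(b_{3,1})^{\Sigma_5})=\dim(\Sigma_5(b_{3,46})^{\Sigma_5})=1$ and $\mathcal M_4^{\Sigma_5}=0$, as claimed.
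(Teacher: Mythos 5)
Your overall strategy is the one the paper actually uses: write a general element of each submodule as $\sum\gamma_k b_{3,k}$, compute $\tau_j(h)+h$ for $j=1,2,3,4$, reduce to the admissible basis modulo $\equiv_{\omega_{(5,3)}}$, and solve the resulting linear system. However, there is a concrete error in your predicted outcome for $\Sigma_5(b_{3,1})$. You assert that the relations merely force the $\gamma_k$ to be constant on orbits and then identify the orbit constants, "leaving the one-parameter family spanned by $\sum_{k=1}^{45}b_{3,k}$." The actual computation gives $\gamma_1=\gamma_2=\cdots=\gamma_{10}$ but $\gamma_k=0$ for $11\leqslant k\leqslant 45$, so the invariant subspace is spanned by $p_{3,5}=\sum_{k=1}^{10}b_{3,k}$, not by the full sum; since the $[b_{3,k}]$ are linearly independent, $\sum_{k=1}^{45}b_{3,k}$ is \emph{not} invariant. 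The dimension happens to be $1$ either way, but your generator is wrong.

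The underlying flaw is the implicit assumption that $\Sigma_5$ permutes the admissible basis, so that invariants are spanned by orbit sums. That is false here: $\tau_j$ applied to an admissible monomial is generally inadmissible, and rewriting it via the strict-inadmissibility relations of Lemmas \ref{bd29-2}--\ref{bd29-5} produces sums of several basis elements, so the relations $\tau_j(h)+h\equiv_{\omega_{(5,3)}}0$ can (and do) annihilate whole blocks of coefficients rather than just equating them. Indeed, if the action on $\mathcal M_4$ were a genuine permutation of $\{[b_{3,k}]:66\leqslant k\leqslant 260\}$, the sum of all its basis vectors would be a nonzero invariant, contradicting $\mathcal M_4^{\Sigma_5}=0$ — so the permutation-module heuristic cannot be what drives the third part either. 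You recognize the need for honest reduction in the $\mathcal M_4$ case; the same care is needed for $\Sigma_5(b_{3,1})$, and once it is applied your argument reduces to the paper's computation with the corrected generator.
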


\begin{proof}
We have $\dim \Sigma_5(b_{3, 1}) = 45$ and the set $\{[b_{3, k}]\, :\, 1\leqslant k\leqslant 45\}$ is a basis of $\Sigma_5(b_{3, 1}).$ Suppose $f \equiv \sum_{k = 1}^{45}\gamma_kb_{3, k}$ with $\gamma_k\in\mathbb F_2$ and $[f]\in \Sigma_5(b_{3, 1})^{\Sigma_5}.$  By computing $\tau_j(f) + f$ in terms of $b_{3, k},\ 1\leqslant k\leqslant 45$ and using the relations $\tau_j(f) + f \equiv 0,\ j = 1, 2, 3, 4,$ one gets
$$ \begin{array}{lll}
\gamma_1 &= \gamma_k,&  \text{for }\, 2\leqslant k\leqslant 10,\\
\gamma_k &= 0,& \text{for } 11\leqslant k\leqslant 45.
\end{array}$$
 Hence $\Sigma_5(b_{3, 1})^{\Sigma_5} = \langle [p_{3, 5}]\rangle$ with $p_{3,5} = \sum_{k = 1}^{10}b_{3, k}.$ By a similar computation, we get $\Sigma_5(b_{3, 46})^{\Sigma_5} = \langle [p_{3,6}]\rangle$ with $p_{3,6} = \sum_{k = 46}^{65}b_{3,k}$ and $\mathcal M_4^{\Sigma_5} = 0.$ The lemma follows. 
\end{proof} 

Based on Lemmas \ref{bdtq-1}, \ref{bdtq-2} and \ref{bdct-2},  one gets the following.

\begin{propo}\label{mdct-2}
$QP_5(\omega_{(5, 3)})^{\Sigma_5} = \langle [p_{3,1}], [p_{3, 2}],\ldots, [p_{3,6}]\rangle.$
\end{propo}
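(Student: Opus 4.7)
The plan is to obtain Proposition \ref{mdct-2} by combining the direct summand decompositions of $QP_5(\omega_{(5,3)})$ into $\Sigma_5$-submodules with the $\Sigma_5$-invariant computations already carried out in Lemmas \ref{bdtq-1}, \ref{bdtq-2}, and \ref{bdct-2}. Since taking $\Sigma_5$-fixed points is an exact functor, any such decomposition reduces the computation of $QP_5(\omega_{(5,3)})^{\Sigma_5}$ to assembling the invariants summand by summand.

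First, I would use the splitting
$$QP_5(\omega_{(5,3)}) = QP_5^0(\omega_{(5,3)}) \,\oplus\, QP_5^+(\omega_{(5,3)})$$
as $\Sigma_5$-modules, which follows from the general decomposition $QP_s = QP_s^0 \oplus QP_s^+$ recorded in Subsection \ref{s2.2}, since $\Sigma_5$ permutes the variables and hence preserves the conditions $a_1a_2\ldots a_5 = 0$ and $a_1a_2\ldots a_5 > 0.$ Next, specializing the subspaces already listed before Lemma \ref{bdtq-1} to $t = 3$, one gets the decomposition
$$QP_5^0(\omega_{(5,3)}) = \Sigma_5(q_{3,1}) \,\oplus\, \Sigma_5(q_{3,31}) \,\oplus\, \Sigma_5(q_{3,91}) \,\oplus\, \mathcal M_2,$$
and from the paragraph preceding Lemma \ref{bdct-2} the analogous decomposition
$$QP_5^+(\omega_{(5,3)}) = \Sigma_5(b_{3,1}) \,\oplus\, \Sigma_5(b_{3,46}) \,\oplus\, \mathcal M_4.$$

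With the decompositions in hand, the invariants are assembled by collecting one line at a time. Lemma \ref{bdtq-1} applied to $t = 3$ gives $\Sigma_5(q_{3,1})^{\Sigma_5} = \langle[p_{3,1}]\rangle$, $\Sigma_5(q_{3,31})^{\Sigma_5} = \langle[p_{3,2}]\rangle$ and $\Sigma_5(q_{3,91})^{\Sigma_5} = \langle[p_{3,3}]\rangle$; Lemma \ref{bdtq-2} gives $\mathcal M_2^{\Sigma_5} = \langle[p_{3,4}]\rangle$; and Lemma \ref{bdct-2} supplies the remaining contributions $\Sigma_5(b_{3,1})^{\Sigma_5} = \langle[p_{3,5}]\rangle$, $\Sigma_5(b_{3,46})^{\Sigma_5} = \langle[p_{3,6}]\rangle$, while $\mathcal M_4^{\Sigma_5} = 0$. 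Summing these, and using exactness of $(\cdot)^{\Sigma_5}$ on finite-dimensional $\mathbb F_2[\Sigma_5]$-modules, yields exactly the span $\langle[p_{3,1}], [p_{3,2}], \ldots, [p_{3,6}]\rangle$ of dimension six.

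The only nontrivial piece of the argument is verifying that the seven subspaces above really are $\Sigma_5$-submodules, i.e.\ that for each generator $\tau_i$ ($1\leq i\leq 4$) and each basis monomial $q_{3,k}$ (resp.\ $b_{3,k}$) in a given piece, the class $[\tau_i(q_{3,k})]_{\omega_{(5,3)}}$ (resp.\ $[\tau_i(b_{3,k})]_{\omega_{(5,3)}}$) lies in the same piece. This is the main computational obstacle: it requires, for each of the $195 + 260 = 455$ admissible monomials, reducing $\tau_i$ applied to that monomial modulo $(\mathscr A^+\cdot P_5)\cap P_5(\omega_{(5,3)}) + P_5^-(\omega_{(5,3)})$ to an $\mathbb F_2$-combination of admissibles, using the explicit admissible basis already produced in Section \ref{s3}. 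Once these closure checks are carried out, the proposition follows immediately from the direct sum decompositions and the cited lemmas.
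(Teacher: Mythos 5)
Your proposal is correct and follows essentially the same route as the paper: the paper likewise assembles $QP_5(\omega_{(5,3)})^{\Sigma_5}$ from the stated $\Sigma_5$-module decompositions of $QP_5^0(\omega_{(5,3)})$ and $QP_5^+(\omega_{(5,3)})$ together with Lemmas \ref{bdtq-1}, \ref{bdtq-2} and \ref{bdct-2}, the only substantive work being the monomial-by-monomial closure and invariance computations you identify. (A minor remark: you do not need any exactness of the fixed-point functor here, only that invariants commute with direct sums, which holds automatically.)
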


{\bf The case \boldmath\mbox{$t  \geqslant 4$}.} We set 
$$ b_{t, 1} = x_1x_2x_3^{2^t-2}x_4^{2^t-2}x_5^{2^{t+1}-1}, \ \ b_{t, 46} = x_1x_2^2x_3^{2^t-4}x_4^{2^t-1}x_5^{2^{t+1}-1}.$$
Computing from the results in Section \ref{s3}, one gets
$$ \dim(\Sigma_5(b_{t, 1})) = 45,\ \ \dim(\Sigma_5(b_{t, 46})) = 20,$$
where 
$$  \begin{array}{ll}
\Sigma_5(b_{t, 1}) &= \langle[b_{t, k}]\, :\, 1\leqslant k\leqslant 45\rangle,\\
\Sigma_5(b_{t, 46}) &= \langle [b_{t, k}]\, :\, 46\leqslant k\leqslant 65\rangle.
\end{array}$$

\begin{lema}\label{bdtq-3}
For any $t\geqslant 4,$ $ \Sigma_5(b_{t, 46})^{\Sigma_5} = \langle [q^*_{t, 2} := b_{t, 46} + b_{t, 47} + \cdots +b_{t, 65}]\rangle.$
\end{lema}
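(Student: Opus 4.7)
My strategy mirrors the pattern used for $\Sigma_5(b_{3,46})$ in Lemma~\ref{bdct-2}. First, I would check that $q^*_{t,2}=\sum_{k=46}^{65}b_{t,k}$ does represent a $\Sigma_5$-invariant class. Since $\Sigma_5$ is generated by the transpositions $\tau_1,\tau_2,\tau_3,\tau_4$, this amounts to showing that each $\tau_j$ permutes the 20 classes $[b_{t,k}]_{\omega_{(5,t)}}$, $46\leqslant k\leqslant 65$, after reduction modulo $\equiv_{\omega_{(5,t)}}$. Given the explicit list of $b_{t,k}$ from Subsection~\ref{dtcndtq-2}, this is a finite check on a basis.

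Next, for any $[f]\in\Sigma_5(b_{t,46})^{\Sigma_5}$, I would write
$$f\equiv_{\omega_{(5,t)}}\sum_{k=46}^{65}\gamma_k\,b_{t,k},\qquad \gamma_k\in\mathbb F_2,$$
and compute $\tau_j(f)$ for $j=1,2,3,4$. Each monomial $\tau_j(b_{t,k})$ is then rewritten in terms of the admissible basis $\{b_{t,k}\}_{k=46}^{65}$ using the strictly inadmissible relations established in Lemmas~\ref{Bdtq-1}--\ref{Bdtq-5} of Section~\ref{s3}, which supply the straightening rules modulo $\mathscr A^+\cdot P_5+P_5^-(\omega_{(5,t)})$. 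Imposing $\tau_j(f)+f\equiv_{\omega_{(5,t)}}0$ for $j=1,2,3,4$ produces four systems of $\mathbb F_2$-linear equations on the $\gamma_k$'s.

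The key claim is that the combined system forces $\gamma_{46}=\gamma_{47}=\cdots=\gamma_{65}$, so that $\Sigma_5(b_{t,46})^{\Sigma_5}$ is one-dimensional and spanned by $[q^*_{t,2}]$. Conceptually, this holds because $\Sigma_5$ acts transitively on $\{[b_{t,k}]_{\omega_{(5,t)}}\mid 46\leqslant k\leqslant 65\}$ modulo hits: the exponent tuple $(1,2,2^t-4,2^t-1,2^{t+1}-1)$ of $b_{t,46}$ has five pairwise distinct entries for every $t\geqslant 4$, so the stabiliser in $\Sigma_5$ is trivial and, modulo the hit straightenings, the orbit exhausts the basis.

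The main obstacle is purely bookkeeping: writing out $\tau_j(b_{t,k})$ and straightening it to the admissible form requires careful use of the strictly inadmissible monomials compiled for general $t\geqslant 4$. Once this is done, the linear algebra is trivial. The uniformity in $t$ is guaranteed by the fact that the exponent pattern, the Cartan-formula identities, and the $\chi$-trick reductions used in Section~\ref{s3} all scale the same way in $t$, so the argument at $t=3$ (Lemma~\ref{bdct-2}) extends to all $t\geqslant 4$ without structural change.
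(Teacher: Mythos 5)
Your method --- verifying that the sum $q^*_{t,2}$ is invariant and then solving the $\mathbb F_2$-linear system arising from $\tau_j(f)+f\equiv_{\omega_{(5,t)}}0$, $j=1,2,3,4$, against the explicit basis $\{[b_{t,k}]\}_{k=46}^{65}$ --- is exactly the computation the paper intends: it declares the proof of Lemma~\ref{bdtq-3} ``straightforward'' and carries out this identical argument for the analogous statements (Lemma~\ref{bdct-2} for $t=3$ and Lemma~\ref{bdtq-4}). One caution on your ``conceptual'' justification: since the five exponents of $b_{t,46}$ are pairwise distinct, the $\Sigma_5$-orbit of that monomial has $120$ elements while only $20$ are admissible, so $\tau_j(b_{t,k})$ in general reduces modulo hits to a \emph{sum} of admissible classes rather than to a single one; the ``transitive permutation action'' shortcut is therefore only heuristic, and the one-dimensionality really does rest on the explicit straightening computation you describe rather than on a trivial-stabiliser count.
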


The proof of this lemma is straightforward.

\medskip

For $t\geqslant 4,$ by an easy computation, we have a direct summand decomposition of $\Sigma_5$-modules:
$$ QP_5^+(\omega_{(5,t)}) = \Sigma_5(b_{t, 1})\, \oplus\, \Sigma_5(b_{t, 46})   \, \oplus\,   \mathcal M^*_{t, 2},$$
where $\mathcal  M^*_{t,2}= \langle [b_{t, k}]\, :\, 66\leqslant k\leqslant 270\rangle.$

\begin{lema}\label{bdtq-4}
 For any integer $t\geqslant 4,$ we have
$$ \Sigma_5(b_{t, 1})^{\Sigma_5} = \langle[q^*_{t,3}],  [q^*_{t,4}] \rangle,\ \  (\mathcal  M^*_{t, 2})^{\Sigma_5} = 0,$$
with $ q^*_{t, 3}:= \sum_{k = 1}^{15}b_{t, k},\ \  q^*_{t, 4}:= \sum_{k = 16}^{45}b_{t, k}.$
\end{lema}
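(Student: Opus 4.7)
The plan is to follow the exact pattern used in Lemmas \ref{bdtq-1}, \ref{bdct-1}, \ref{bdct-2} and \ref{bdtq-3}: fix a generic $\Sigma_5$-invariant as an $\mathbb{F}_2$-linear combination of the basis elements, apply the four generators $\tau_1, \tau_2, \tau_3, \tau_4$ of $\Sigma_5$, and read off the invariance relations. Concretely, for the first claim, suppose $[f]\in \Sigma_5(b_{t,1})^{\Sigma_5}$ with $f \equiv \sum_{k=1}^{45}\gamma_k\, b_{t,k}$, $\gamma_k\in\mathbb F_2$. For each $j\in\{1,2,3,4\}$, I would compute $\tau_j(f)+f$ and express it in terms of the admissible basis $\{b_{t,k}\,:\,1\leq k\leq 45\}$ modulo $P_5^-(\omega_{(5,t)})$, using Theorem \ref{dlsig} and Theorem \ref{dlKS} to rewrite any resulting inadmissible terms and to discard monomials whose weight vectors are strictly smaller than $\omega_{(5,t)}$.

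The key expectation, by analogy with the $t=3$ case in Lemma \ref{bdct-2}, is that the orbit structure of $\Sigma_5$ on $\{b_{t,k}:1\leq k\leq 45\}$ splits into exactly two orbits under the invariance relations — one covering the first fifteen monomials and one covering the remaining thirty — giving $\gamma_1 = \cdots = \gamma_{15}$ and $\gamma_{16} = \cdots = \gamma_{45}$ with no further collapse between these two families. This yields $\Sigma_5(b_{t,1})^{\Sigma_5} = \langle [q^*_{t,3}], [q^*_{t,4}]\rangle$. The contrast with the $t=3$ case (where the second group of thirty monomials collapsed into the image of the first via strictly-inadmissible relations arising from the small exponents) is that for $t\geq 4$ the relevant exponents $2^t-1, 2^t-2, 2^{t+1}-1$ are large enough that no such reduction can take place, so the two orbits remain linearly independent.

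For the second claim, $(\mathcal M^*_{t,2})^{\Sigma_5} = 0$, I would write $g \equiv \sum_{k=66}^{270}\delta_k\, b_{t,k}$ for a supposed invariant and again impose $\tau_j(g)+g\equiv_{\omega_{(5,t)}} 0$ for $j=1,2,3,4$. The idea is to identify, for each orbit of $\Sigma_5$ on the index set $\{66,\ldots,270\}$, at least one index $k_0$ such that some $\tau_j$ carries $b_{t,k_0}$ to a monomial which is \emph{not} admissible; invoking Theorem \ref{dlKS} and the strict inadmissibility lemmas \ref{Bdtq-1}--\ref{Bdtq-5} rewrites that image as a sum of admissibles strictly smaller than $b_{t,k_0}$, thereby forcing $\delta_{k_0}=0$. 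Propagating this through all orbits, all $\delta_k$ vanish.

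The main obstacle is the sheer bookkeeping: one must track $45+205 = 250$ coefficients through four transpositions, and each application of a $\tau_j$ produces images that are typically inadmissible and require reduction via the Cartan formula and the catalogue of strictly inadmissible monomials assembled in Section \ref{s3}. The calculation is conceptually routine once the orbit decomposition under $\Sigma_5$ is understood, but verifying that no unexpected linear relation arises between orbits (which could enlarge the invariant subspace or merge orbits, as happens for small $t$) requires careful use of Lemmas \ref{Bdtq-4} and \ref{Bdtq-5}, whose strict-inadmissibility statements are precisely tuned to $t\geq 4$ (respectively $t\geq 5$). This is exactly where the hypothesis $t\geq 4$ enters in an essential way.
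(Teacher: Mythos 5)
Your proposal follows essentially the same route as the paper's proof: the paper likewise takes a generic invariant $g\equiv\sum_{k}\gamma_k b_{t,k}$, computes $\tau_j(g)+g$ for $j=1,2,3,4$ in terms of the admissible basis, and reads off $\gamma_1=\cdots=\gamma_{15}$, $\gamma_{16}=\cdots=\gamma_{45}$ for the first claim and $\gamma_k=0$ for $66\leqslant k\leqslant 270$ for the second. Your additional commentary on where the hypothesis $t\geqslant 4$ enters (via the strict-inadmissibility lemmas tuned to large exponents) is a correct reading of the mechanism that the paper leaves implicit.
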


\begin{proof}
Suppose $g \equiv \sum_{k = 1}^{45}\gamma_kb_{t, k}$ with $\gamma_k\in\mathbb F_2$ and $[g]\in \Sigma_5(b_{t, 1})^{\Sigma_5}.$  By computing $\tau_i(g) + g$ in terms of $b_{t, k},\ 1\leqslant i\leqslant 45$ and using the relations $\tau_i(g) + g \equiv 0,\ i = 1, 2, 3, 4,$ we get
$$ \begin{array}{lll}
\gamma_1 &= \gamma_k,&  \text{for }\, 2\leqslant k\leqslant 15,\\
\gamma_{16} &= \gamma_k,& \text{for } 17\leqslant k\leqslant 45.
\end{array}$$
This implies $\Sigma_5(b_{t, 1})^{\Sigma_5} = \langle [q^*_{t, 3}],  [q^*_{t, 4}]\rangle$ with $q^*_{t,3} = \sum_{k = 1}^{15}b_{t, k}$ and $q^*_{t,4} = \sum_{k = 16}^{45}b_{t, k}.$ 

\medskip

Now suppose $g \equiv \sum_{k = 66}^{270}\gamma_kb_{t,k}$  and $[g]\in (\mathcal M^*_{t, 2})^{\Sigma_5}.$ We compute  $\tau_j(g) + g$ in  terms of $b_{t, k},\, 66\leqslant k\leqslant 270.$ By computing directly from the relations $\tau_j(g) + g\equiv 0,\, j  =1, 2, 3, 4,$ we obtain $\gamma_k  = 0$ for $66\leqslant k\leqslant 270.$ The lemma is proved.
\end{proof}

Combining this result and Lemmas \ref{bdtq-1}, \ref{bdtq-2}, \ref{bdtq-3}, \ref{bdtq-4}, one gets the following.

\begin{propo}\label{mdct-3}
For any $t\geqslant 4,$ 
$$ QP_5(\omega_{(5, t)})^{\Sigma_5} = \langle [p_{t,1}], [p_{t, 2}], [p_{t, 3}], [q^*_{t, 1}], [q^*_{t, 2}], [q^*_{t, 3}], [q^*_{t, 4}]\rangle.$$
\end{propo}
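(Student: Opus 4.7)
The plan is to assemble Proposition \ref{mdct-3} from the structural results on $\Sigma_5$-invariants already established for each direct summand of $QP_5(\omega_{(5,t)})$. Since taking $\Sigma_5$-fixed points is an exact functor in characteristic $0$ behavior on direct summands (equivalently, for a direct sum of $\mathbb F_2[\Sigma_5]$-modules $V = \bigoplus_i V_i$ one always has $V^{\Sigma_5} = \bigoplus_i V_i^{\Sigma_5}$), the main task is to verify that the decompositions used in Section \ref{s3} are actual $\Sigma_5$-module decompositions, and then to read off the invariants one summand at a time.

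The first step is to record the direct sum decomposition
\[
QP_5(\omega_{(5,t)}) \;=\; QP_5^0(\omega_{(5,t)}) \,\bigoplus\, QP_5^+(\omega_{(5,t)}),
\]
which holds as $\Sigma_5$-modules because the subspaces $P_5^0$ and $P_5^+$ are preserved by permutations of the variables. Next, I would verify that each of the spans used earlier is $\Sigma_5$-stable: concretely, one checks that the three subsets $\{q_{t,k}: 1\le k\le 30\}$, $\{q_{t,k}: 31\le k\le 90\}$, $\{q_{t,k}: 91\le k\le 110\}$, $\{q_{t,k}: 111\le k\le 195\}$ of $\mathscr B_5^0(\omega_{(5,t)})$ are closed under the action of $\Sigma_5$ modulo $(\mathscr A^+\cdot P_5 \cap P_5(\omega_{(5,t)})) + P_5^-(\omega_{(5,t)})$, and similarly for $\{b_{t,k}:1\le k\le 45\}$, $\{b_{t,k}:46\le k\le 65\}$, $\{b_{t,k}:66\le k\le 270\}$ in $\mathscr B_5^+(\omega_{(5,t)})$. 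These $\Sigma_5$-orbit-closedness statements are essentially combinatorial and follow from inspecting the explicit monomial lists in Subsections \ref{dtcndtq-1} and \ref{dtcndtq-2}, together with Kameko-style reductions to eliminate any permuted monomial that is not in the admissible list.

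Once the decomposition
\[
QP_5(\omega_{(5,t)}) \;=\; \Sigma_5(q_{t,1}) \,\oplus\, \Sigma_5(q_{t,31}) \,\oplus\, \Sigma_5(q_{t,91}) \,\oplus\, \mathcal M^*_{t,1} \,\oplus\, \Sigma_5(b_{t,1}) \,\oplus\, \Sigma_5(b_{t,46}) \,\oplus\, \mathcal M^*_{t,2}
\]
is in place as $\Sigma_5$-modules, the conclusion is immediate: applying $(-)^{\Sigma_5}$ yields
\[
QP_5(\omega_{(5,t)})^{\Sigma_5} \;=\; \Sigma_5(q_{t,1})^{\Sigma_5} \oplus \Sigma_5(q_{t,31})^{\Sigma_5} \oplus \Sigma_5(q_{t,91})^{\Sigma_5} \oplus (\mathcal M^*_{t,1})^{\Sigma_5} \oplus \Sigma_5(b_{t,1})^{\Sigma_5} \oplus \Sigma_5(b_{t,46})^{\Sigma_5} \oplus (\mathcal M^*_{t,2})^{\Sigma_5}.
\]
By Lemma \ref{bdtq-1} the first three summands contribute $[p_{t,1}]$, $[p_{t,2}]$, $[p_{t,3}]$; by Lemma \ref{bdtq-2} the fourth contributes $[q^*_{t,1}]$; by Lemmas \ref{bdtq-4} and \ref{bdtq-3} the fifth and sixth contribute $[q^*_{t,3}]$, $[q^*_{t,4}]$ and $[q^*_{t,2}]$ respectively; and by Lemma \ref{bdtq-4} again the last summand contributes nothing. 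Summing these generators gives the claimed basis.

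The step I expect to be the actual bookkeeping obstacle is not the invariant-theoretic assembly above, but the preliminary verification that each of the seven subsets is in fact $\Sigma_5$-stable at the level of admissible classes. For generic $t\ge 4$ one must confirm, for every generator $\tau_i$ ($1\le i\le 4$) and every representative monomial, that the image $\tau_i$ applied to the admissible monomial reduces modulo the $\omega_{(5,t)}$-equivalence to an $\mathbb F_2$-linear combination of admissibles inside the \emph{same} listed block. This is a uniform-in-$t$ check best carried out using Theorem \ref{dlKS} together with the strict inadmissibility lemmas \ref{Bdtq-1}--\ref{Bdtq-5} of the previous section; once it is in hand, Proposition \ref{mdct-3} follows by the direct-sum argument without further computation.
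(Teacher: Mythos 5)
Your proposal is correct and follows essentially the same route as the paper: the paper likewise records the direct summand decompositions of $QP_5^0(\omega_{(5,t)})$ and $QP_5^+(\omega_{(5,t)})$ into the seven $\Sigma_5$-submodules and then obtains Proposition \ref{mdct-3} by combining Lemmas \ref{bdtq-1}, \ref{bdtq-2}, \ref{bdtq-3} and \ref{bdtq-4} summand by summand. Your identification of the $\Sigma_5$-stability of each listed block of admissible monomials as the real bookkeeping burden matches what the paper dismisses as "a simple computation," and the fixed-point functor does distribute over direct sums as you claim (though the "characteristic $0$" framing is unnecessary — componentwise fixedness suffices over any field).
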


\subsection{Computation of  $QP_5(\omega_{(5,t)})^{GL_5}$}

Let $g\in (P_5)_{3(2^t-1) + 2^t}$ such that $[g]\in (QP_5)_{3(2^t-1) + 2^t}^{GL_5}.$ Then we have $[g]\in QP_5(\omega_{(5,t)})^{GL_5}.$

\medskip

For $t = 1,$ we have $3(2^t-1) + 2^t = 5$ and $(QP_5)_{5}^{GL_5} = 0$ (see Sum \cite{N.S8, N.S6}). So, the theorem is true for $t = 1.$

\medskip

For $t = 2,$ from Proposition \ref{mdct-1}, we have $g\equiv \gamma_1 p_{2, 1} + \gamma_2p_{2,2} + \gamma_3 p_{2,3}$ with $\gamma_i\in \mathbb F_2,\ i = 1,2, 3.$ By computing $\tau_5(g) + g$ in terms of the admissible monomials, we get
$$ \tau_5(g) + g\equiv  (\gamma_1+ \gamma_2)x_1^3x_2^3x_4^3x_5^4  +  \gamma_2x_1^3x_2x_3^4x_4^3x_5^2 + 
(\gamma_2 + \gamma_3)x_1x_2^3x_3^2x_4x_5^6    + \ \text{other terms} \equiv 0.$$
This relation implies $\gamma_i = 0$ for $1\leqslant i\leqslant 3.$ The theorem is proved for $t = 2.$

\medskip

For $t = 3,$ from Proposition \ref{mdct-2}, we have $g\equiv \sum_{j = 1}^6\beta_jp_{3,j}$ with $\beta_j\in \mathbb F_2, j = 1,2, \ldots, 6.$ A direct computation shows that
$$ \begin{array}{ll}
\medskip
\tau_5(g) + g &\equiv \beta_1x_1^7x_2^{15}x_5^7 + \beta_2x_2x_3^6x_4^7x_5^{15} +  (\beta_2 + \beta_3)x_1x_2^7x_4^6x_5^{15} + (\beta_2 + \beta_4)x_1x_2x_3^{14}x_4^7x_5^{6} \\
\medskip
&\quad + \beta_5x_1^3x_2^{15}x_3^{3}x_4^4x_5^{4}+ \beta_6x_1^7x_2^7x_3^{9}x_4^2x_5^{4} + \mbox{other terms\, }\equiv 0.
\end{array}$$
From this relation, we obtain $\beta_j = 0,\, 1\leqslant j\leqslant 6.$ The theorem holds for  $t = 3.$

\medskip

For $t\geqslant 4,$ by Proposition \ref{mdct-3},  we have
$$ g\equiv \lambda_1p_{t,1} + \lambda_{2}p_{t,2} + \lambda_3p_{t,3} + \sum_{j = 1}^4\lambda_{3+j}q^*_{t, j},$$
where $\lambda_k\in \mathbb F_2,\  1\leqslant k\leqslant 7.$ By computing $\tau_5(g) + g$ in terms of the admissible monomials, we get
  $$ \begin{array}{ll}
\medskip
\tau_5(g) + g &\equiv \lambda_1x_2^{3}x_3^{2^t-3}x_4^{2^t-1}x_5^{2^{t+1}-2} + (\lambda_2 + \lambda_4)x_1^{2^t-1}x_3^{3}x_4^{2^t-3}x_5^{2^{t+1}-2}\\
\medskip
&\quad  + \lambda_3x_1^{3}x_2^{2^{t+1}-3}x_3^{2^t-1}x_4^{2^t-2} + \lambda_4x_1^{7}x_2^{2^{t+1}-5}x_4^{2^t-3}x_5^{2^t-2}\\
\medskip
&\quad + (\lambda_1 + \lambda_2 + \lambda_5)x_1^{3}x_2^{5}x_3^{2^t-6}x_4^{2^{t+1}-3}x_5^{2^t-2} \\
\medskip
&\quad+ \lambda_6x_1^{3}x_2^{5}x_3^{2^{t+1}-6}x_4^{2^t-1}x_5^{2^t-4} + \lambda_7x_1^{7}x_2^{2^t-5}x_3^{5}x_4^{2^t-6}x_5^{2^{t+1}-4}+ \mbox{other terms\, }\equiv 0.
\end{array}$$
The last equality implies $\lambda_k = 0,\ k = 1, 2, \ldots, 7.$ The theorem is completely proved.

\section{Appendix}\label{s5}
In this section, we list all the admissible monomials of degree $d = 3(2^t-1)  +2^t$ in $P_5.$ 

\subsection{The admissible monomials of degree $3(2^t-1)  +2^t$ in $P_5^0$}\label{dtcndtq-1}

Recall that
$$  \mathscr B_5^0(3(2^t-1)  +2^t)  = \mathscr B_5^0(\omega_{(5,t)}) = \Phi^0(\mathscr B_4(3(2^t-1)  +2^t))$$
with $\omega_{(5,t)} = \underset{\mbox{{$t$ times of $3$}}}{\underbrace{(3, 3, \ldots, 3}}, 1).$
By Sum \cite{N.S4}, we have
$$ |\mathscr B_4(3(2^t-1)  +2^t)| = \left\{
%
 \end{center}
\medskip

For $t = 4,$
$$ 269.\ x_1^7x_2^{11}x_3^{13}x_4^{14}x_5^{16}\ \ \ \ \ 270.\ x_1^7x_2^{11}x_3^{13}x_4^{18}x_5^{12}.$$
\medskip

For $t \geq 5,$
$$ 269.\ x_1^{7}x_2^{11}x_3^{2^t-11}x_4^{2^t-6}x_5^{2^{t+1}-4}\ \ \ \ \ 270.\ x_1^{7}x_2^{11}x_3^{2^t-11}x_4^{2^{t+1}-6}x_5^{2^{t}-4}.$$

\end{document}